\patchcmd\Gread@eps{\@inputcheck#1 }{\@inputcheck"#1"\relax}{}{}
\theoremstyle{plain}
\newtheorem{lemma}{Lemma}[section]
\newtheorem*{theorem*}{Theorem}
\newtheorem*{lemma*}{Lemma}
\newtheorem*{proposition*}{Proposition}
\newtheorem*{conjecture*}{Conjecture}
\newtheorem*{corollary*}{Corollary}
\newtheorem*{problem*}{Problem}
\newtheorem{theorem}[lemma]{Theorem}
\newtheorem{corollary}[lemma]{Corollary}
\newtheorem{proposition}[lemma]{Proposition}
\newtheorem{question}[lemma]{Question}
\theoremstyle{definition}
\newtheorem{example}[lemma]{Example}
\newtheorem{remark}[lemma]{Remark}
\newcommand{\F}[1]{\mathscr{#1}}
\newcommand{\fto}[1]{\stackrel{#1}{\to}}
\newcommand{\Z}{\mathbb{Z}}
\newcommand{\FF}{\mathbb{F}}
\renewcommand{\F}{\mathbb{F}}
\newcommand{\CC}{\mathbb{C}}
\newcommand{\QQ}{\mathbb{Q}}
\newcommand{\OO}{\mathcal{O}}
\newcommand{\te}{\otimes}
\newcommand{\cM}{\mathcal M}
\newcommand{\cP}{\mathcal P}
\newcommand{\bv}{{\bf v}}
\newcommand{\bu}{{\bf u}}
\newcommand{\cV}{\mathcal{V}}
\newcommand{\leqpar}{\underset{{\scriptscriptstyle (}-{\scriptscriptstyle )}}{<}}
\renewcommand{\P}{\mathbb{P}}
\newcommand{\PP}{\mathbb{P}}
\DeclareMathOperator{\Flag}{Flag}
\DeclareMathOperator{\ch}{ch}
\DeclareMathOperator{\Hom}{Hom}
\DeclareMathOperator{\Pic}{Pic}
\DeclareMathOperator{\rk}{rk}
\DeclareMathOperator{\Ext}{Ext}
\DeclareMathOperator{\ext}{ext}
\DeclareMathOperator{\Coh}{Coh}
\DeclareMathOperator{\sHom}{\mathcal{H}\kern -.5pt\mathit{om}}
\DeclareMathOperator{\sTor}{\mathcal{T}\kern -1.5pt\mathit{or}}
\DeclareMathOperator{\codim}{codim}
\begin{document}

\date{\today}

\author[J. Huizenga]{Jack Huizenga}
\address{Department of Mathematics, The Pennsylvania State University, University Park, PA 16802}
\email{huizenga@psu.edu}

\author[J. Kopper]{John Kopper}
\address{Department of Mathematics, The Pennsylvania State University, University Park, PA 16802}
\email{kopper@psu.edu}

\subjclass[2010]{Primary: 14J60, 14J26. Secondary: 14D20}
\keywords{Moduli spaces of sheaves, Ample vector bundles}
\thanks{During the preparation of this article the first author was partially supported by the NSF FRG grant   DMS 1664303}

\title{Ample stable vector bundles on rational surfaces}

\begin{abstract}
We study ample stable vector bundles on minimal rational surfaces. We give a complete classification of those moduli spaces for which the general stable bundle is both ample and globally generated. We also prove that if $V$ is any stable bundle, then a large enough direct sum $V^{\oplus n}$  has  ample deformations unless there is an obvious numerical reason why it cannot.   Previous work in this area  has mostly focused on rank two bundles and relied primarily on classical constructions such as the Serre construction.  In contrast, we use recent advances in moduli of vector bundles to obtain strong results for vector bundles of any rank.

\end{abstract}

\maketitle

\setcounter{tocdepth}{1}
\tableofcontents

\section{Introduction} On a curve, a stable vector bundle is ample if and only if it has positive degree. In higher dimensions, Fulton \cite{Fulton} shows ampleness cannot be characterized numerically, even for stable bundles. Fulton and Lazarsfeld \cite{FultonLazarsfeld} give necessary numerical conditions for the existence of an ample bundle on any projective variety, but on any particular variety these conditions are typically far from sufficient.  A classification of ample bundles has remained elusive even in the case of very well-known surfaces.  In this paper, we work towards classifying the numerical invariants of ample stable bundles on minimal rational surfaces.  Previous work in this area  has mostly focused on rank two bundles and relied primarily on classical constructions such as the Serre construction (see, for example, \cite{BallicoLanteri}, \cite{LePotierAmple}, \cite{Sterian}).  In contrast, we use recent advances in moduli of vector bundles to obtain strong results for vector bundles of any rank.

Because ampleness is open in families, there exists a stable ample vector bundle of Chern character $\bv$ if and only if the general stable bundle in the same component of the moduli space $M(\bv)$ of semistable sheaves is also ample. On minimal rational surfaces, the moduli spaces $M(\bv)$ are always irreducible, suggesting the following natural rephrasing of the classification problem.
\begin{question}\label{q:intro}
For which Chern characters $\bv$ on a minimal rational surface is the general  bundle in $M(\bv)$ ample?
\end{question}
We begin by discussing the known numerical obstructions to ampleness, and then state our main theorems which construct ample bundles to investigate how sharp these obstructions are.

\subsection{Obstructions to ampleness}  There are two main known numerical obstructions to ampleness.  The main theorem of \cite{FultonLazarsfeld} implies that if $V$ is an ample bundle on a smooth surface then  
\begin{equation}\label{FLbound}\tag{$\ast$}
\frac{1}{2}\nu(V)^2 > \frac{\Delta(V)}{\rk(V) + 1},
\end{equation}
where $$\nu(V) = \frac{c_1(V)}{\rk(V)} \qquad \textrm{and}\qquad \Delta(V) = \frac{1}{2}\nu(V)^2 - \frac{\ch_2(V)}{\rk(V)}$$ are the \emph{total slope} and \emph{discriminant}, respectively.  These are the so-called \emph{logarithmic invariants} and remain constant when a Chern character is scaled.

The curves on a particular surface provide additional obstructions to ampleness beyond the ``universal'' Fulton-Lazarsfeld inequality (\ref{FLbound}).  An ample bundle must be ample when restricted to any curve.  Let $\PP^2$ be the projective plane with hyperplane class $H$, and let $\FF_e= \PP(\OO_{\PP^1}\oplus\OO_{\PP^1}(e))\to \P^1$ be the Hirzebruch surface with fiber class $F$, section $E$ of self-intersection $-e$, and minimal  polarization $H = E + (e+1)F$.  Analyzing the restriction of a stable vector bundle to these curves, we find the following further necessary conditions for ampleness.

\begin{proposition*}[\ref{prop:obstruction}]
Let $X=\P^2$ or $\F_e$ and let $V$ be an ample stable vector bundle on $X$ of rank at least $2$.
\begin{enumerate}
        \item If $X = \PP^2$, then $\nu(V) \cdot H > 1 + \frac{1}{\rk(V)}$ or $V\cong T_{\P^2}$.
\item If $X = \FF_0$, then $\nu(V)\cdot F > 1$ and $\nu(V)\cdot E > 1$.
        \item If $X = \FF_e$ with $e\geq 1$, then $\nu(V) \cdot F > 1$ and $\nu(V) \cdot E \geq 1$.
\end{enumerate}  
\end{proposition*}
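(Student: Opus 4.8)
The plan is to exploit the principle that an ample bundle restricts to an ample bundle on every curve, together with the fact that a bundle $\bigoplus_i\OO_{\PP^1}(a_i)$ on $\PP^1$ is ample exactly when every $a_i\geq 1$. So if $V$ is ample of rank $r$ and $C\cong\PP^1$ is any smooth rational curve on $X$, then $c_1(V)\cdot C=\deg(V|_C)\geq r$, and if $c_1(V)\cdot C=r$ then in fact $V|_C\cong\OO_C(1)^{\oplus r}$. Applying this with $C$ a line in $\PP^2$, resp.\ a ruling fiber $F$, resp.\ the section $E$, immediately yields the non-strict inequalities $\nu(V)\cdot H\geq 1$, $\nu(V)\cdot F\geq 1$, and $\nu(V)\cdot E\geq 1$. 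The real content is upgrading these by ruling out the equality cases via stability.

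For $X=\PP^2$, I would set $d=c_1(V)\cdot H\geq r$ and dispose of $d\geq r+2$ at once, since then $\nu(V)\cdot H=d/r\geq 1+2/r>1+\frac1r$. When $d\in\{r,r+1\}$, the splitting type $(a_1\geq\cdots\geq a_r)$ of $V$ on any line must satisfy $a_i\geq 1$ and $\sum a_i=d$, hence equals $(1,\dots,1)$ if $d=r$ and $(2,1,\dots,1)$ if $d=r+1$; in particular $V$ is uniform. A uniform bundle on $\PP^2$ of constant splitting type is a twist of the trivial bundle, so $d=r$ gives $V\cong\OO_{\PP^2}(1)^{\oplus r}$, which is not stable for $r\geq 2$ --- this case does not occur. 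For $d=r+1$, the bundle $V(-1)$ is uniform of splitting type $(1,0,\dots,0)$, and I would invoke the classification of uniform bundles of this splitting type to conclude $V(-1)\cong\OO_{\PP^2}(1)\oplus\OO_{\PP^2}^{\oplus r-1}$ or $V(-1)\cong T_{\PP^2}(-1)\oplus\OO_{\PP^2}^{\oplus r-2}$. Since no direct sum of two or more nonzero sheaves is stable, the first option is excluded and the second forces $r=2$ and $V(-1)\cong T_{\PP^2}(-1)$, i.e.\ $V\cong T_{\PP^2}$.

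For $X=\FF_e$ with ruling $\pi\colon\FF_e\to\PP^1$ of fiber class $F$ (recall $E\cdot F=1$), I would argue that if $c_1(V)\cdot F=r$ then $V|_F\cong\OO_F(1)^{\oplus r}$ for every fiber, so $V(-E)$ restricts trivially to every fiber of $\pi$. Cohomology and base change then make $\pi_*(V(-E))$ a rank-$r$ bundle with $R^1\pi_*(V(-E))=0$ and the evaluation map $\pi^*\pi_*(V(-E))\to V(-E)$ an isomorphism (it is an isomorphism on each fiber), whence $V(-E)\cong\pi^*\big(\bigoplus_i\OO_{\PP^1}(c_i)\big)$ and $V\cong\bigoplus_i\OO_{\FF_e}(E+c_iF)$. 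This decomposable bundle is not stable, a contradiction, so $c_1(V)\cdot F\geq r+1$ and $\nu(V)\cdot F>1$. When $e=0$, the class $E$ likewise moves in a base-point-free pencil and defines a second ruling, so the identical argument gives $\nu(V)\cdot E>1$; when $e\geq 1$ the curve $E$ is rigid, no deformation argument is available, and one is left with only the bound $\nu(V)\cdot E\geq 1$ coming from ampleness of $V|_E$. This is exactly why cases (2) and (3) differ.

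I expect the main obstacle to be the $\PP^2$ case with $d=r+1$: everything else reduces to the restriction-to-curves principle plus the relative rigidity argument on ruled surfaces, but pinning down that the only stable possibility is $T_{\PP^2}$ requires a genuine structure theorem for uniform bundles on $\PP^2$ of splitting type $(k+1,k,\dots,k)$ (classically due to Van de Ven, Elencwajg, and others). If one prefers to keep the argument self-contained, the needed statement can be reproved directly by gluing the canonical maximal-degree sub-line bundles $\OO_\ell(2)\subset V|_\ell$ over the incidence variety of point-line pairs in $\PP^2$ and pushing the resulting sheaf forward to reconstruct $V$; this is more work but avoids citing the full uniform classification.
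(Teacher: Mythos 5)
Your reductions via restriction to rational curves are fine, and your Hirzebruch argument is essentially the paper's: if $c_1(V)\cdot F=r$ then $V(-E)$ is trivial on every fiber, so it descends as a pullback from $\P^1$ and $V$ splits, contradicting stability; on $\F_0$ the second ruling gives $\nu(V)\cdot E>1$, while for $e\geq 1$ the rigid curve $E$ only yields $\nu(V)\cdot E\geq 1$. Likewise on $\P^2$ the cases $d\geq r+2$ and $d=r$ (constant splitting type $\Rightarrow$ $V\cong\OO_{\P^2}(1)^{\oplus r}$, not stable) are correct.

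The gap is in the crucial borderline case $d=r+1$ on $\P^2$. There you invoke ``the classification of uniform bundles of splitting type $(1,0,\dots,0)$'' in \emph{arbitrary} rank, attributing it to Van de Ven, Elencwajg, et al. What is classical is the classification of uniform bundles on $\P^2$ of rank $2$ (Van de Ven) and rank $3$ (Elencwajg); for general rank no such statement is available off the shelf, and one cannot appeal to a general principle that uniform bundles on $\P^2$ are homogeneous, since Dr\'ezet constructed non-homogeneous uniform bundles already in rank $4$. Your proposed self-contained fallback (the standard construction over the incidence variety $\Sigma\subset\P^2\times\PP^{2*}$) hides exactly the hard point: the canonical maximal-degree sub-line bundle glues to $\mathcal{L}\cong p^*\OO_{\P^2}(2)\otimes q^*\OO_{\PP^{2*}}(b)$ with $b\leq 0$ a priori arbitrary (a Chern class computation gives $b=-c_2(V(-1))$), and the pushforward argument identifying $V$ with $\OO(2)\oplus\OO(1)^{r-1}$ or $T_{\P^2}\oplus\OO(1)^{r-2}$ only works when $b\in\{0,-1\}$; ruling out $b\leq -2$ is the real content, and neither stability (which bounds $c_2$ only from below) nor the Fulton--Lazarsfeld inequality obviously does it. The paper sidesteps uniform-bundle theory entirely: since $V|_\ell$ is ample, $H^1(V(-2)|_\ell)=0$ for every line, so by Grauert and the Koszul resolution of the universal line one gets a surjection $\OO_{\PP^{2*}}(-1)^{m+\epsilon}\to\OO_{\PP^{2*}}^{m}$ with $m=-\chi(V(-2))$, which forces $m=0$; stability then pins down the discriminant and the Dr\'ezet--Le Potier description of stable bundles with $\Delta<1/2$ leaves only $T_{\P^2}$. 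To repair your argument you would either need to prove the splitting-type $(2,1,\dots,1)$ classification in all ranks (handling the $b\leq -2$ descent problem), or feed the ampleness hypothesis back in cohomologically as the paper does.
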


In general, the inequalities in Proposition \ref{prop:obstruction} do not imply the Fulton-Lazarsfeld bound (\ref{FLbound}) since they do not involve the discriminant $\Delta(V)$.   Thus they are not sufficient to guarantee ampleness.   Nevertheless, the two main results in this paper show that under some additional assumptions, the conditions of Proposition \ref{prop:obstruction} are sufficient to guarantee that the general stable bundle is ample.

\subsection{Asymptotic ampleness} In our first approach, we consider what happens when we replace a Chern character $\bv$ with a sufficiently large multiple $n\bv$.  Observe that if $\nu(V)$ and $\Delta(V)$ are held fixed but $\rk(V)$ becomes arbitrarily large, then the inequality (\ref{FLbound}) reduces to the simpler condition $\nu(V)^2 > 0$.  This inequality is easily implied by the necessary inequalities of Proposition \ref{prop:obstruction}, so it no longer provides an obstruction to ampleness.  Our first main theorem shows there are no further obstructions.

\begin{theorem*}[\ref{thm:asymptoticample}]
Let $X = \P^2$ or $\F_e$ and let $\bv$ be the Chern character of a stable vector bundle. Suppose either
\begin{enumerate}
\item $X = \P^2$ and $\nu(\bv)\cdot H > 1$, or

\item $X = \F_0$ and $\nu(\bv)\cdot F > 1$ and $\nu(\bv) \cdot E > 1$, or  

\item $X = \F_e$ with $e\geq 1$ and $\nu(\bv) \cdot F > 1$ and $\nu(\bv) \cdot E \geq 1$. 
\end{enumerate}
If $n$ is a sufficiently large integer and $V\in M(n\bv)$ is general, then $V$ is ample.
\end{theorem*}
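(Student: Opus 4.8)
The plan is to reduce the statement to a question about a single well-chosen ample and globally generated bundle, then promote it to the general member of $M(n\bv)$ for large $n$ by a direct-sum argument. First I would record the elementary fact that if $W$ is an ample vector bundle and $U$ is a globally generated bundle, then $W \oplus U$ is ample, and more generally any extension of a globally generated bundle by an ample bundle is ample; in fact ampleness of $E$ is equivalent to ampleness of $\OO_{\PP(E)}(1)$, and being globally generated makes the relevant line bundle nef, so one only needs positivity in the ``$W$ directions.'' Thus it suffices to produce, for each Chern character $\bv$ satisfying the hypotheses, a \emph{fixed} ample bundle $A$ on $X$ together with a globally generated bundle whose Chern character is a positive multiple of $\bv$; taking $n\bv$ large enough we can write $n\bv = \ch(A) + \ch(G)$ with $G$ globally generated, so $A \oplus G$ is ample of Chern character $n\bv$, and then I must argue the general member of $M(n\bv)$ deforms this (or is at least as ample).

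The key construction is the globally generated piece. Under the numerical hypotheses in each case, the slope conditions $\nu(\bv)\cdot F > 1$, $\nu(\bv)\cdot E \ge 1$ (or $>1$), or $\nu(\bv)\cdot H > 1$ say exactly that $\bv$, after clearing denominators, is a Chern character whose general stable bundle is globally generated — this is precisely the kind of classification the paper advertises, so I would invoke the companion result that identifies which moduli spaces $M(\bw)$ have globally generated general member (the boundary of that region being governed by the same curve classes $F$, $E$, $H$). So for a suitable multiple $m\bv$ the general $G \in M(m\bv)$ is globally generated. For the fixed ample bundle $A$: on $\PP^2$ one can take a twist $T_{\PP^2}(k)$ or a sufficiently positive general bundle; on $\FF_e$ one takes a pullback-free ample bundle such as $\OO(aE + bF)^{\oplus r} \oplus(\text{corrections})$ with $a,b \gg 0$, or better, a general stable bundle of large slope in every curve direction, which is ample by the same globally-generated-plus-positivity reasoning applied on $X$ and on each of $\PP^2$, $\FF_e$'s distinguished curves.

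Finally I would assemble $n\bv$: choose the fixed ample $A$, then for $n$ large write $n\bv - \ch(A)$ as a Chern character whose general member is globally generated (possible once the slopes are large, which happens for $n \gg 0$ since the hypotheses are open slope inequalities preserved under scaling), giving an ample $V_0 = A \oplus G \in M(n\bv)$. Since $M(n\bv)$ is irreducible on a minimal rational surface and ampleness is open in families, the general $V \in M(n\bv)$ is then ample. \emph{The main obstacle} I expect is the bookkeeping at the seam between the two pieces: I need the globally generated region in the $(\nu,\Delta)$-plane to actually contain $n\bv - \ch(A)$, i.e. that subtracting a bounded Chern character does not push the slopes below the threshold nor the discriminant outside the globally-generated range — this requires knowing that for large $n$ the relevant moduli space is nonempty, its general member globally generated, and that $A$ can be chosen with small enough discriminant contribution. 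Controlling $\Delta(n\bv - \ch(A))$ and verifying it lies in the globally generated locus (rather than, say, forcing the general sheaf to be non-locally-free) is where the real work lies; the rest is formal properties of ample and globally generated bundles plus irreducibility of the moduli space.
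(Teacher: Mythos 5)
Your reduction rests on a false lemma. You claim that if $W$ is ample and $U$ is globally generated then $W\oplus U$ is ample (``one only needs positivity in the $W$ directions''), but a direct sum is ample if and only if \emph{both} summands are ample: the summand $U$ is a quotient of $W\oplus U$, and quotients of ample bundles are ample, so e.g.\ $\OO_{\P^2}(1)\oplus\OO_{\P^2}$ is not ample even though $\OO_{\P^2}$ is globally generated. Global generation only gives nefness of $\OO_{\P(U)}(1)$, and ampleness of $\OO_{\P(W\oplus U)}(1)$ fails already on the subvariety $\P(U)$. The same objection kills the ``extension of a globally generated bundle by an ample bundle'' variant, since the split extension is a special case. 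So the bundle $A\oplus G$ you build of character $n\bv$ is not ample unless $G$ itself is ample, which is essentially what you are trying to prove; the whole decomposition $n\bv=\ch(A)+\ch(G)$ therefore does not produce the required ample representative.

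There is a second, independent gap: even granting an ample $V_0$ of character $n\bv$, the bundle $A\oplus G$ is a direct sum of pieces of different slopes and so is not Gieseker semistable, hence does not lie in $M(n\bv)$; irreducibility of $M(n\bv)$ plus openness of ampleness says nothing about it. One must exhibit the special bundle inside an irreducible family that contains the semistable locus as a dense open subset. The paper does exactly this: it works in the irreducible stack $\cP_L(n\bv)$ of $L$-prioritary sheaves, constructs an ample bundle $V$ of character $n\bv$ as a quotient $0\to U\to \OO_X(H)^{nr+s}\to V\to 0$ of the manifestly ample bundle $\OO_X(H)^{nr+s}$ (using global generation of $U^*(H)$, established via the global-generation criterion after a normalization $1<\nu(\bv)\cdot L\le 2$, together with a Bertini-type statement to get a locally free cokernel, with $\Delta$ of the kernel made nonnegative by taking $n\gg 0$), and then verifies that this $V$ is $L$-prioritary so that openness of ampleness on $\cP_L(n\bv)$ transfers the conclusion to the general stable bundle. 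To repair your argument you would need both to replace the direct-sum step by a genuinely ampleness-preserving construction (quotient of an ample bundle, or ample tensor nef) and to check a prioritary-type condition on the special bundle so it deforms to the general member of $M(n\bv)$.
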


The required inequalities in the theorem can be more compactly rephrased as the condition that $\nu(\bv)-H$ is big and nef.  An effective bound on $n$ can be easily given; see Remark \ref{rem-effective} for details.  The theorem also easily implies the next striking corollary which was stated in the abstract.

\begin{corollary}
Let $X=\P^2$ or $\F_e$ and suppose $V$ is a stable bundle such that $\nu(V)$ satisfies the inequalities in Theorem \ref{thm:asymptoticample}.  Then any sufficiently large direct sum $V^{\oplus n}$ has ample semistable deformations.
\end{corollary}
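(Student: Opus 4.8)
The plan is to deduce the corollary directly from Theorem~\ref{thm:asymptoticample} by unwinding what it means for $V^{\oplus n}$ to have ample semistable deformations. First I would observe that the logarithmic invariants $\nu$ and $\Delta$ are unchanged under passing from $V$ to $V^{\oplus n}$: indeed $\rk(V^{\oplus n}) = n\rk(V)$ and $c_1(V^{\oplus n}) = n c_1(V)$, so $\nu(V^{\oplus n}) = \nu(V)$, and similarly $\ch_2(V^{\oplus n})/\rk(V^{\oplus n}) = \ch_2(V)/\rk(V)$, whence $\Delta(V^{\oplus n}) = \Delta(V)$. In particular $\ch(V^{\oplus n}) = n\bv$ where $\bv = \ch(V)$, and the hypothesis that $\nu(V)$ satisfies the inequalities of Theorem~\ref{thm:asymptoticample} is exactly the hypothesis that $\nu(\bv)$ does, since $\nu(\bv) = \nu(V)$.

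The second step is to apply Theorem~\ref{thm:asymptoticample} to the Chern character $\bv = \ch(V)$: for all sufficiently large $n$, the general sheaf $W \in M(n\bv)$ is an ample stable (hence semistable) vector bundle. It remains to explain why such a $W$ is a deformation of $V^{\oplus n}$. This is where one uses that $M(n\bv)$ is irreducible on a minimal rational surface (as recalled in the introduction): since $V^{\oplus n}$ is a semistable sheaf with Chern character $n\bv$, it defines a point of $M(n\bv)$, and since $W$ is a general point of the same irreducible moduli space, $W$ is a semistable deformation of $V^{\oplus n}$. Thus $V^{\oplus n}$ admits ample semistable deformations, as claimed. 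One should take a moment to note that $V^{\oplus n}$ is indeed semistable: a direct sum of copies of a stable (hence semistable) sheaf of slope $\mu$ is semistable of the same slope, so $[V^{\oplus n}] \in M(n\bv)$ is legitimate.

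The only genuinely delicate point is the implicit claim that membership in the same irreducible component of the moduli space is what is meant by ``has ample semistable deformations''; I would state this explicitly, perhaps recalling that ampleness is an open condition in families (as noted in the introduction), so that the locus of ample bundles in (an étale neighborhood in) $M(n\bv)$ is open, and a general point of an irreducible space lies in every nonempty open subset. No serious obstacle arises beyond bookkeeping: the substance of the corollary is entirely contained in Theorem~\ref{thm:asymptoticample}, and the reduction is the observation that $\nu$ and $\Delta$, and hence the relevant moduli space $M(n\bv)$, depend only on the logarithmic invariants of $V$.
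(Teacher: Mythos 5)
Your proof is correct and is exactly the intended deduction: the paper states this corollary without proof as an immediate consequence of Theorem \ref{thm:asymptoticample}, using precisely your observations that $\ch(V^{\oplus n}) = n\ch(V)$, that $V^{\oplus n}$ is semistable, and that ampleness is open on the irreducible moduli space. The only refinement worth adding is to carry out the deformation step on the irreducible stack $\cM(n\bv)\subset\cP_L(n\bv)$ (or on a Quot-scheme parameter space carrying a universal family), since the coarse space $M(n\bv)$ need not carry a family through which to deform $V^{\oplus n}$; with that phrasing your argument is exactly the paper's.
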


The proof of Theorem \ref{thm:asymptoticample} requires new construction techniques for stable ample bundles.  One of the easiest ways to construct an ample bundle is to take a quotient of a known ample bundle.  However, in practice it is very difficult to verify the stability of such a bundle.  Our key idea is to construct an ample bundle of the desired Chern character $n\bv$ that is not obviously stable, but does have stable deformations.  Since ampleness is an open condition, it follows that the general stable bundle is ample.  Our construction of an appropriate ample quotient relies on the recent classification of moduli spaces where the general bundle is globally generated.  To show that our bundle has stable deformations, we use recent results computing the cohomology of the general bundle in a moduli space.   We summarize these necessary preliminary results from \cite{CoskunHuizengaBN} in \S\ref{sec:BN-prelim} and \S\ref{sec:gg-prelim}.  These problems are being actively researched by several authors on other surfaces (e.g., K3 surfaces), and we expect our techniques to be useful for constructing ample bundles on those surfaces as well.   

\begin{example}
On $\P^2$, if the rank is $\rk(V) = 2$ and the total slope is $\frac{3}{2}H$, then the inequality (\ref{FLbound}) reads $\Delta(V) < \frac{27}{8}$.  However, the only ample stable bundle with this rank and slope is the tangent bundle with discriminant $\Delta = \frac{3}{8}$ (see Proposition \ref{prop:converse_notample_p2}).  Thus (\ref{FLbound}) is far from sharp in this case. In particular, even though the character $\bv = (r,\nu,\Delta) = (2,\frac{3}{2}H,\frac{7}{8})$ satisfies the Fulton-Lazarsfeld bound (\ref{FLbound}) and the necessary conditions of Proposition \ref{prop:obstruction}, there are no stable ample bundles of character $\bv$.  However, Theorem \ref{thm:asymptoticample} shows that there are stable ample bundles of character $n\bv$ for sufficiently large $n$.
\end{example}

\subsection{Ample globally generated bundles} Next, we consider the ampleness problem under the additional assumption that the general sheaf $V\in M(\bv)$ is globally generated.  Characters $\bv$ with this property were completely classified in \cite{CoskunHuizengaBN}, and can be described by certain inequalities; see \S\ref{sec:gg-prelim}.  In this case, Gieseker's Lemma allows us to check ampleness by restricting to curves.

\begin{theorem*}[\ref{thm:main-gg}]
Let $X = \P^2$ or $\F_e$ and let $\bv$ be a Chern character such that the general sheaf $V\in M(\bv)$ is a globally generated vector bundle.  Suppose either
\begin{enumerate}
\item $X = \P^2$ and $\mu(\bv) > 1+\frac{1}{\rk(\bv)}$, or

\item $X = \F_0$ and $\nu(\bv)\cdot F > 1$ and $\nu(\bv) \cdot E > 1$, or  

\item $X = \F_e$ with $e\geq 1$ and $\nu(\bv) \cdot F > 1$ and $\nu(\bv) \cdot E \geq 1$. 
\end{enumerate}
Then $V$ is ample.
\end{theorem*}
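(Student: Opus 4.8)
The plan is to exploit the hypothesis that the general $V\in M(\bv)$ is globally generated, which reduces ampleness to a restriction-to-curves problem via Gieseker's Lemma: a globally generated bundle $V$ is ample if and only if its restriction to every curve $C\subset X$ is ample, equivalently if the classifying map $X\to\Gr$ associated to the surjection $H^0(V)\otimes\OO_X\onto V$ is finite. So the strategy is: first reduce to checking that no curve is contracted, then stratify the possible contracted curves by their class in the (rank-one or rank-two) Néron–Severi group of $X$, and finally show that on each such curve the restriction $V|_C$ has positive degree, using the numerical hypotheses on $\nu(\bv)$.

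Concretely, I would proceed as follows. First, since $X=\P^2$ or $\F_e$, the curves whose classes could be contracted by a globally generated but non-ample bundle are constrained: a globally generated $V$ restricted to $C$ fails to be ample only if $V|_C$ has a trivial quotient, which forces $\deg V|_C = c_1(V)\cdot C \ge 0$ but also (because the bundle is globally generated and hence $V|_C$ is globally generated of the same rank) that some Harder–Narasimhan piece of $V|_C$ is trivial. The key point is that if $C$ is irreducible, then $V|_C$ is ample as soon as $c_1(V)\cdot C > 0$ and $V|_C$ is a globally generated vector bundle, \emph{unless} $C\cong\P^1$ and the restriction splits with an $\OO_{\P^1}$ summand. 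So the analysis comes down to rational curves $C$ with $\nu(\bv)\cdot C \le 1$ (when $\rk(\bv)$ is large the relevant bound is $\nu(\bv)\cdot C\le 1$; for small rank on $\P^2$ one must be slightly more careful because of the $1+\frac1{\rk(\bv)}$ threshold, which is exactly the tangent bundle borderline). The numerical hypotheses are precisely engineered so that every rational curve class $C$ on $X$ satisfies $\nu(\bv)\cdot C > 1$: on $\P^2$ every curve has class $dH$ with $d\ge 1$ so $\nu(\bv)\cdot C = d\,\mu(\bv) \ge \mu(\bv) > 1+\frac1{\rk}$; on $\F_0=\P^1\times\P^1$ the two rulings give the two conditions $\nu(\bv)\cdot F>1$, $\nu(\bv)\cdot E>1$, and any other irreducible curve is ample-tested by a positive combination of these; on $\F_e$ with $e\ge1$ the $(-e)$-curve $E$ is the only one where equality $\nu(\bv)\cdot E \ge 1$ is allowed, and one checks that a globally generated bundle with $\deg V|_E \ge 1$ restricted to $E\cong\P^1$ is still ample (a globally generated bundle on $\P^1$ is ample iff it has no trivial summand iff its degree is at least its rank — so here I actually need the slightly stronger input that $c_1(V)\cdot E \ge \rk(V)$, which should follow from global generation plus stability, or this borderline case needs a separate argument).

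The main obstacle, and where I would spend the most care, is the borderline cases: (i) the tangent bundle $T_{\P^2}$ on $\P^2$, where $\mu = \tfrac32 = 1 + \tfrac1{\rk}$ with $\rk=2$ and the bundle \emph{is} ample, so the strict inequality $\mu(\bv) > 1+\frac1{\rk(\bv)}$ in the hypothesis is exactly what's needed to exclude the subtle case, and I'd need to confirm no other globally generated characters sit at that threshold; and (ii) the section $E$ on $\F_e$, $e\ge 1$, where only $\nu(\bv)\cdot E\ge 1$ is assumed. For (ii), the point is that a globally generated vector bundle $W$ on $\P^1$ with $\det W = \OO(d)$, $d\ge 1$, need not be ample (e.g. $\OO\oplus\OO(d)$), so I cannot conclude from the degree alone; instead I must use that $V|_E$ is the restriction of a globally generated bundle on the surface whose slope is bounded below, and invoke the classification of globally generated characters from \S\ref{sec:gg-prelim} to rule out a trivial summand appearing in $V|_E$ — essentially, the generic global generation on the surface propagates to generic behavior on $E$, forcing $V|_E$ to be a general globally generated bundle of its degree, which is balanced and hence ample once the degree is at least the rank. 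I expect this to be the technical heart of the proof; everything else is a clean application of Gieseker's Lemma together with the Proposition \ref{prop:obstruction}-style curve bookkeeping.
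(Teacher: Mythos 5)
Your starting point (Gieseker's Lemma, plus the observation that a trivial quotient of $V|_D$ is the danger) matches the paper, but there are two genuine gaps in the way you reduce and then finish. First, your reduction to rational curves rests on a false claim: it is not true that a globally generated bundle of positive degree on an irreducible curve is ample unless the curve is $\P^1$ and the restriction has a trivial summand. On a curve of any genus, $\OO_C\oplus A$ with $A$ ample and globally generated is globally generated of positive degree but not ample, so degree considerations alone cannot dispose of the higher curve classes. The paper instead argues cohomologically: $\Hom(V,\OO_D)$ injects into $\Ext^1(V,\OO(-D))\cong H^1(V(K_X+D))^*$, and for the \emph{general} $V$ one shows $V(K_X+D)$ has nonspecial cohomology for \emph{all} irreducible classes $D$ simultaneously (this simultaneity requires an inductive restriction argument reducing to finitely many classes, not just Weak Brill--Noether for each fixed $D$). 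Hence every class with $\chi(\bv(K_X+D))\geq 0$ is harmless, and only an explicit finite list of rational classes (e.g.\ $H,2H$ on $\P^2$; $F$, $E$, $E+bF$, $2E+2F$, $bE+F$ on the Hirzebruch surfaces) survives. Your proposal has no mechanism playing the role of this vanishing.

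Second, for the surviving rational classes your argument is incomplete even where the idea is closest to correct. Knowing that $V|_C$ is balanced for the \emph{general} curve $C$ in its linear system does not rule out a trivial quotient on a \emph{special} (jumping) curve of that class, and several of the dangerous classes move (lines and conics on $\P^2$, $E+bF$, $bE+F$, $F$, $2E+2F$ on $\F_0$ and $\F_1$). The paper handles this by first showing the general $V$ is $D$-prioritary for these classes (using $h^0(V(K_X+D))=0$ and the globally generated presentation), so that a complete family on $X$ restricts to a complete family on each curve of class $D$, and then proving a codimension estimate: in a complete family of globally generated bundles on $\P^1$ of rank $r$ and slope $\mu$, the locus with a trivial quotient has codimension at least $r\mu-r+1$. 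Ampleness for the general $V$ then follows from the comparison $r\,\nu(\bv)\cdot D-r+1>\dim|D|$, checked class by class; this is exactly where the thresholds enter (e.g.\ on $\P^2$, $\mu>1+\tfrac1r$ gives codimension $\geq 3>2=\dim|H|$ for lines, which mere positivity of the degree would not). Your proposal never performs this family-of-curves count, and your heuristic ``generic global generation propagates to $E$, forcing $V|_E$ to be balanced'' supplies neither the prioritary/complete-family input nor the jumping-curve analysis, so the technical heart of the theorem is missing.
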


We sketch an outline of the proof.  First we identify a list of curve classes that could potentially cause the failure of ampleness.  The irreducible curves of these classes turn out to be smooth rational curves, and we prove the restriction maps from the moduli space $M(\bv)$ to the stack $\Coh(\P^1)$ are smooth.  This allows us to do a dimension count to show that any given curve class cannot cause the failure of ampleness.  Our results are similar in spirit to the work of Le Potier on $\P^2$ in the rank two case \cite{LePotierAmple}; however, the recent computation of the cohomology of a general sheaf in \cite{CoskunHuizengaBN} is crucial to extending the story to arbitrary rank.

\begin{remark}
The full solution to Question \ref{q:intro} remains open.  Gieseker \cite{GiesekerAmple} gives the following interesting example on $\P^2$:  let  $V_{d}$ be a general cokernel of the form $$0\to \OO_{\P^2}(-d)^{2}\to \OO_{\P^2}(-1)^{4} \to V_d\to 0.$$ If $d\leq 6$  then $V_{d}$ does not satisfy (\ref{FLbound}), so $V_d$ is not ample.  But Gieseker shows that $V_d$ has stable deformations and is ample if $d\gg 0$.  
Determining an explicit $d$ such that $V_d$ is ample is an interesting open problem.  However, we  show that if $d\geq 12$ then $V_d \oplus V_d$ has ample stable deformations (see Example \ref{example:Gieseker}).
\end{remark}

\subsection*{Structure of the paper}
In \S \ref{sec:preliminaries}, we recall the necessary background on moduli of sheaves, the Weak Brill-Noether theorems, and the classifications of globally generated characters.  In \S\ref{sec:notample}, we give our necessary conditions for ampleness.  We prove our two main theorems in \S \ref{sec:asymptotics} and \S \ref{sec:ample_globallygenerated}.

\subsection*{Acknowledgments}
We would like to thank Izzet Coskun for many valuable discussions.

\section{Preliminaries}\label{sec:preliminaries}

\subsection{Numerical invariants and semistability} We refer the reader to \cite{HuybrechtsLehn} and \cite{LePotier} for further information on stability and moduli spaces of sheaves.  Let $X$ be a smooth surface with a polarization $H$, and let $\bv\in K(X)$ be a Chern character of nonzero rank. We define the \emph{slope} $\mu(\bv)$, the \emph{total slope} $\nu(\bv)$, and the \emph{discriminant} $\Delta(\bv)$ by the formulas
\[
\mu(\bv) = \frac{c_1(\bv)\cdot H}{\rk(\bv) H^2}, \qquad \nu(\bv) = \frac{c_1(\bv)}{\rk(\bv)},\qquad \textrm{and}  \qquad \Delta(\bv) = \frac{1}{2}\nu(\bv)^2 - \frac{\ch_2(\bv)}{\rk(\bv)}.
\]
For a torsion-free sheaf $V$, we define the invariants $\mu(V)$, $\nu(V)$, $\Delta(V)$ by first taking the Chern character.   We define the \emph{Hilbert polynomial} $P_V(m)$ and the \emph{reduced Hilbert polynomial} $p_V(m)$ by $$P_V(m) = \chi(V(mH)) \qquad\textrm{and}\qquad p_V(m) = \frac{\chi(V(mH))}{\rk(V)}.$$  In terms of these invariants, the Riemann-Roch formula reads $$\chi(V) = \rk(\bv) (P(\nu(\bv))-\Delta(\bv)),$$ where $$P(\nu) =  \chi(\OO_X)+ \frac{1}{2}(\nu^2-\nu\cdot K_X)$$ is the Hilbert polynomial of $\OO_X$.

A torsion-free sheaf $V$ is \emph{$H$-Gieseker (semi)stable} if for all proper subsheaves $W \subset V$, we have $p_W(m) \leqpar p_V(m)$ for $m \gg 0$.  The Bogomolov inequality states that if $V$ is semistable then $\Delta(V) \geq 0$.  If $V$ and $W$ are $H$-semistable and $\mu(V)>\mu(W)$, then $\Hom(V,W) = 0$.  For $\bv\in K(X)$, there are projective moduli spaces $M(\bv) = M_{X,H}(\bv)$ of $H$-Gieseker semistable sheaves on $X$.  

\subsection{The projective plane and Hirzebruch surfaces}  We will primarily be interested in the surfaces $\P^2$ and $\F_e$, so we fix some notation to streamline the exposition.  See \cite{Beauville} or \cite{Hartshorne} for additional information on these surfaces.  On $\P^2$, we write $H$ for the class of a line and also write $L = H$.  The Picard group is $\Pic(\P^2) = \Z H$, and the canonical class is $K_{\P^2} = -3H$.  The polynomial $P$ in the Riemann-Roch formula is given by $$P(x) = \frac{1}{2}(x^2+3x+2).$$

For $e\geq 0$, let  $\F_e = \P(\OO_{\P^1}\oplus \OO_{\P^1}(e))$ be the Hirzebruch surface with a section $E$ of self-intersection $E^2 = -e$.  We let $\pi:\F_e\to \P^1$ be the natural projection, and let $F$ be the class of a fiber.  We let $H = E+(e+1)F$ be the minimal ample polarization, and let $L = F$ be the class of a fiber.  We have $\Pic(\F_e) = \Z E \oplus \Z F$  and the intersection numbers are given by 
\[
F^2 = 0, \quad E^2 = -e, \quad E \cdot F = 1.
\]
The effective cone of divisors is spanned by $E$ and $F$, and the nef cone is spanned by $E+eF$ and $F$.  The canonical class is $K_{\F_e} = -2E-(e+2)F$.  The polynomial $P$ in the Riemann-Roch formula can be written as $$P(xE+yF) = (x+1)\left(y+1-\frac{1}{2}ex\right).$$

\subsection{Ample vector bundles}
A vector bundle $V$ on a projective variety $X$ is said to be \emph{ample} if $\OO_{\P V}(1)$ is an ample line bundle. We collect a few well-known facts about ample bundles in the following proposition 

\begin{proposition}[\cite{LazarsfeldPositivityVol2}]
Let $X$ be a projective variety.
\begin{enumerate} 
\item Any quotient of an ample bundle on $X$ is ample.
\item The direct sum $V \oplus W$ of two bundles $V$ and $W$ is ample if and only if $V$ and $W$ are ample.
\item If $V$ is ample and $W$ is nef, then $V\te W$ is ample.
\item Ampleness of vector bundles on $X$ is open in families.
\end{enumerate}
\end{proposition}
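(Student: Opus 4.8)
These four properties are standard, and in the paper it suffices to cite \cite{LazarsfeldPositivityVol2}; still, let me indicate the shape of the argument I would give for each part. My plan is to dispatch parts (1) and (4) as purely formal statements, deduce half of (2) immediately, and then reduce the remaining implications to Hartshorne's cohomological criterion.

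For (1), a surjection $V\onto Q$ of vector bundles on $X$ induces a closed immersion $\iota\colon \P(Q)\hookrightarrow \P(V)$ over $X$ with $\iota^{*}\OO_{\P(V)}(1)=\OO_{\P(Q)}(1)$ (using Grothendieck's convention in which $\P(E)$ parametrizes rank-one quotients of $E$); since the restriction of an ample line bundle to a closed subscheme is ample, ampleness of $\OO_{\P(V)}(1)$ forces ampleness of $\OO_{\P(Q)}(1)$, i.e.\ $Q$ is ample. For (4), given a vector bundle $\mathcal V$ on a scheme that is flat and projective over a parameter scheme $T$, I would pass to the relative projectivization $\P(\mathcal V)\to T$ with its tautological line bundle $\OO(1)$; ampleness of $\OO(1)$ restricted to the fibre over $t\in T$ is an open condition on $t$ by openness of ampleness in proper flat families, so ampleness of $\mathcal V$ is open on $T$. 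The ``only if'' half of (2) then follows immediately from (1), since $V$ and $W$ are each quotient bundles of $V\oplus W$.

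The real content lies in the ``if'' direction of (2) and in (3), and here the key tool is Hartshorne's cohomological criterion: a vector bundle $E$ on the projective variety $X$ is ample if and only if for every coherent sheaf $\mathcal F$ there is an $m_{0}$ such that $\mathcal F\te \Sym^{m}E$ is globally generated (equivalently $H^{i}(X,\mathcal F\te \Sym^{m}E)=0$ for all $i>0$) for every $m\ge m_{0}$; one also uses the characterization that $W$ is nef if and only if $W\te A$ is ample for every ample line bundle $A$ on $X$. For (2), I would decompose $\Sym^{m}(V\oplus W)=\bigoplus_{a+b=m}\Sym^{a}V\te \Sym^{b}W$; for $m\gg 0$ one of the two exponents is large, and the positivity coming from $\Sym^{a}V$ or $\Sym^{b}W$ (via ampleness of $V$ or $W$) should dominate a fixed negative contribution extracted from the other factor, giving global generation of each summand after twisting by $\mathcal F$. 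For (3), I would expand $\Sym^{m}(V\te W)$ into Schur functors $S_{\lambda}V\te S_{\lambda}W$ and balance the genuine positivity supplied by $V$ against the bounded defect by which $W$ fails to be ample, again reducing after finitely many steps to a cohomology vanishing governed by ampleness of $V$.

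The hard part will be (3): parts (1) and (4) are formal, (2) ``only if'' is one line, and the ``if'' direction of (2) is essentially a warm-up for (3), whereas ample$\,\te\,$nef $=$ ample genuinely requires the full force of both the ampleness criterion and the nef characterization together with the combinatorics of the Schur decomposition. All of this is carried out in detail in \cite{LazarsfeldPositivityVol2}, so in the body of the paper the cited reference is all that is needed.
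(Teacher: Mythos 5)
Your proposal is correct and matches the paper's treatment: the paper gives no argument of its own for these standard facts and simply cites \cite{LazarsfeldPositivityVol2}, which is exactly what you conclude suffices. Your accompanying sketches (the closed immersion $\P(Q)\subset\P(V)$ for (1), openness of ampleness of $\OO_{\P(\mathcal V)}(1)$ in flat families for (4), the quotient argument for the ``only if'' of (2), and the symmetric-power/cohomological criterion for the remaining implications) follow the standard arguments in that reference, even if the details for the ``if'' of (2) and for (3) are only outlined.
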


In the final section of the paper we will use Gieseker's Lemma to analyze ampleness of globally generated bundles.

\begin{lemma}[Gieseker's Lemma \protect{\cite[Prop. 6.1.7]{LazarsfeldPositivityVol2}}]
Let $V$ be a globally generated vector bundle on an irreducible projective variety $X$.  Then $V$ fails to be ample if and only if there exists an irreducible curve $D \subset X$ such that $V|_D$ has a trivial quotient.
\end{lemma}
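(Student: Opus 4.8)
The plan is to prove both implications directly: the reverse one is formal, while the forward one carries all of the content and goes through the morphism to projective space determined by the global generation of $V$.

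First I would dispose of the reverse implication, which does not even use global generation. If $D\subset X$ is an irreducible curve and $V|_D$ has a trivial quotient $V|_D\twoheadrightarrow\OO_D$, then were $V$ ample, its restriction $V|_D$ would be ample on the projective curve $D$ (a restriction of an ample bundle to a closed subvariety is ample), hence the quotient $\OO_D$ would be ample; but $\OO_D$ has degree $0$ on every component of $D$, so it is not ample, a contradiction.

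For the forward implication, I would set $W = H^0(X,V)$ and work with the Grothendieck convention under which $\P(V)$ parametrizes one-dimensional quotients and $\OO_{\P V}(1)$ is the universal quotient line bundle. The surjection $W\otimes\OO_X\twoheadrightarrow V$ coming from global generation induces a closed immersion $\iota=(f,\pi)\colon \P V\hookrightarrow \P(W)\times X$, where $\pi\colon\P V\to X$ is the projection and $f\colon\P V\to\P(W)$ is the composite with the first factor, and it satisfies $\OO_{\P V}(1)=f^{*}\OO_{\P(W)}(1)$. In particular $\OO_{\P V}(1)$ is nef, and $V$ is ample if and only if $f$ is finite: if $f$ is finite then $f^{*}\OO_{\P(W)}(1)=\OO_{\P V}(1)$ is ample, while if $f$ is not finite then, being proper, it has a positive-dimensional fiber, which contains an irreducible curve $C$ with $\OO_{\P V}(1)\cdot C = f^{*}\OO_{\P(W)}(1)\cdot C = 0$, so $\OO_{\P V}(1)$ is not ample. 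Thus, if $V$ is not ample, I may choose an irreducible curve $C\subset\P V$ contracted by $f$. Since $\iota$ is a closed immersion and $f|_C$ is constant, $\pi|_C\colon C\to X$ is a closed immersion onto an irreducible curve $D:=\pi(C)$ (it cannot be a point, as otherwise $C$ itself would be a point). Then $C\subset\P(V|_D)=\pi^{-1}(D)$ is the image of a section $s\colon D\to\P(V|_D)$, which corresponds to a line-bundle quotient $V|_D\twoheadrightarrow N$ with $N\cong s^{*}\OO_{\P(V|_D)}(1)\cong\OO_{\P V}(1)|_C$; and this last bundle is trivial because $f|_C$ is constant. Hence $V|_D\twoheadrightarrow\OO_D$ is the desired trivial quotient.

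The hard part will be the bookkeeping of the final step: one must carefully invoke (i) that $\P V\to\P(W)\times X$ is a \emph{closed immersion}, so that the contracted curve $C$ maps isomorphically — not merely finitely — onto $D$; (ii) the standard correspondence between sections of a projectivized bundle over a curve and its line-bundle quotients; and (iii) the identification of the resulting quotient with $\OO_D$ via the contraction. Throughout, projectivity of $X$ is used: it makes $\P V$ projective, it guarantees that restrictions of ample bundles to closed subvarieties are ample, and it lets one pass from ``$f$ not finite'' to ``$f$ contracts a curve'' (a proper quasi-finite morphism is finite).
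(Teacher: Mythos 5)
Your proof is correct; the paper does not prove this lemma but simply cites it from Lazarsfeld (Prop.\ 6.1.7 of \emph{Positivity in Algebraic Geometry II}), and your argument via the evaluation map $\P V \hookrightarrow \P(W)\times X$, the equivalence of ampleness with finiteness of $f\colon \P V\to \P(W)$, and the identification of a contracted curve with a section giving a trivial quotient is essentially the standard proof given there. No gaps: the key points (closed immersion forcing $\pi|_C$ to be an isomorphism onto $D$, the section--quotient correspondence, and triviality of $\OO_{\P V}(1)|_C$) are all handled correctly.
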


\subsection{Prioritary sheaves} Let $D$ be a divisor class on the smooth surface $X$. A sheaf $V$ is called \emph{$D$-prioritary} if it is torsion-free and satisfies $\Ext^2(V,V(-D)) = 0$. We denote by $\mathcal{P}_{D}(\bv)$ the stack of $D$-prioritary sheaves on $X$ with Chern character $\bv$.

A smooth family of sheaves $\mathcal{V}_s/S$ is called \emph{complete} if the Kodaira-Spencer map $T_sS \to \Ext^1(\mathcal{V}_s,\mathcal{V}_s)$ is surjective for all $s$ in $S$. The importance of $D$-prioritary sheaves is that they induce complete families on restrictions to curves of class $D$. Explicitly, if $\mathcal{V}_s/S$ is a complete family of $D$-prioritary locally free sheaves on $X$, then we obtain a family $\mathcal{V}_s|_D/S$ on $D$, and the Kodaira-Spencer map is the composition
\[
T_sS \to \Ext^1(\mathcal{V}_s,\mathcal{V}_s) \to \Ext^1(\mathcal{V}_s|_D,\mathcal{V}_s|_D).
\]
The first map is surjective because the family $\mathcal{V}_s/S$ is assumed to be complete. To see that the second map surjects, apply the functor $\Hom(\mathcal{V}_s,-)$ to the restriction exact sequence
\[
0 \to \mathcal{V}_s(-D) \to \mathcal{V}_s \to \mathcal{V}_s|_D \to 0.
\] to get an exact sequence $$\Ext^1(\cV_s,\cV_s) \to \Ext^1(\cV_s,\cV_s|_D)\to \Ext^2(\cV_s,\cV_s(-D)).$$ We can identify $\Ext^1(\cV_s,\cV_s|_D)$ with $\Ext^1(\cV_s|_D,\cV_s|_D)$, and since $\cV_s$ is prioritary we find that the map $\Ext^1(\cV_s,\cV_s)\to \Ext^1(\cV_s|_D,\cV_s|_D)$  is surjective.

When $V$ is an $H$-semistable sheaf, it is automatically $D$-prioritary for any divisor $D$ such that $(K_X+D)\cdot H <0$.  Indeed, by Serre duality $$\Ext^2(V,V(-D)) \cong \Hom(V,V(K_X+D))^\ast$$ so $\Hom(V,V(K_X+D))=0$ by stability.  

In particular, when $X = \P^2$ or $\F_e$, we see that $H$-semistable sheaves are $L$-prioritary.  Therefore the stack $\cM(\bv)$ of semistable sheaves is an open substack of $\cP_L(\bv)$.  In particular, to check that an open property holds for the general semistable sheaf, it is sufficient to produce a single prioritary sheaf with that property.  Much is known about these moduli stacks.  We summarize a few important properties here.  
 
\begin{theorem}[\cite{HirschowitzLaszlo} in the $\P^2$ case; \cite{Walter} and {\cite[Proposition 3.6]{CoskunHuizengaExistence}} in the $\F_e$ case] \label{thm:prioritaryProps}
Let $X = \P^2$ or $\F_e$ and let $\bv \in K(X)$ be a Chern character of positive rank.
\begin{enumerate}
\item  If $\Delta(\bv)\geq 0$, then the stack $\cP_L(\bv)$ is nonempty. \item The stack $\cP_L(\bv)$ is irreducible. 
\item If $r(\bv) \geq 2$, then the general sheaf in $\cP_L(\bv)$ is a vector bundle.
\end{enumerate}
If there is a semistable sheaf of character $\bv$, parts (2) and (3) also hold for the moduli space $M(\bv)$.
\end{theorem}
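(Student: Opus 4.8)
The plan is to deduce all the assertions from three facts about the stack $\cP_L(\bv)$: it is smooth; when nonempty it is connected, hence — being smooth — irreducible; and when $\rk(\bv)\ge 2$ and $\cP_L(\bv)\ne\emptyset$ it contains a locally free sheaf. Granting these, part (3) is immediate since local freeness is an open condition on the irreducible stack $\cP_L(\bv)$, and the statements for $M(\bv)$ follow because the stack $\cM(\bv)$ of semistable sheaves is a nonempty open substack of $\cP_L(\bv)$ — hence irreducible, with the same generic sheaf — and $M(\bv)$ is its good moduli space. Smoothness is the easy input: the stack $\Coh(X)$ has tangent space $\Ext^1(V,V)$ and obstructions in $\Ext^2(V,V)$ at $[V]$, and the prioritary hypothesis kills the obstructions. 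Indeed, choosing a smooth curve $C\in|L|$ (a line on $\P^2$, a fibre on $\F_e$) and applying $\Hom(V,-)$ to $0\to V(-L)\to V\to V|_C\to 0$ gives $\Ext^2(V,V(-L))\to\Ext^2(V,V)\to\Ext^2(V,V|_C)$, and the rightmost group is $\Hom(V|_C,V\otimes K_X)^\vee$ by Serre duality, which vanishes as it consists of maps from a sheaf supported on a curve to a torsion-free sheaf. Thus $\cP_L(\bv)$ is smooth of dimension $-\chi(V,V)$, which Riemann--Roch computes from $\bv$.

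For nonemptiness, assume $\Delta(\bv)\ge 0$ and take a direct sum $B=\bigoplus\OO_X(D_i)$ of line bundles with $\rk(B)=\rk(\bv)$ and $c_1(B)=c_1(\bv)$ whose restriction to $C$ is balanced on $\P^1$ and with $\Delta(B)\le 0$; such a $B$ exists and is $L$-prioritary, since $\Ext^2(\OO(D_i),\OO(D_j-L))=H^2(\OO(D_j-D_i-L))^\vee$ vanishes once the splitting type of $B$ along $C$ is balanced. An elementary modification $0\to V'\to V\to\OO_p\to 0$ at a point $p$ where $V$ is locally free carries an $L$-prioritary sheaf to an $L$-prioritary sheaf — apply $\Hom(-,V'(-L))$ and $\Hom(V,-)$ to the evident short exact sequences and use $\Ext^1(V,\OO_p)=0$ — and it raises $\Delta$ by $1/\rk(\bv)$. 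Since $\rk(\bv)\bigl(\Delta(\bv)-\Delta(B)\bigr)=c_2(\bv)-c_2(B)$ is a nonnegative integer, performing that many modifications at distinct general points produces an $L$-prioritary sheaf of character $\bv$.

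For connectedness and the existence of a locally free point I would induct on $c_2(\bv)$, holding $\rk(\bv)$ and $c_1(\bv)$ fixed. The base case is the smallest $c_2$ with $\cP_L(\bv)\ne\emptyset$, where connectedness can be checked by hand — every prioritary sheaf of that character lies in a connected family containing the balanced direct sum of line bundles (on $\P^2$, where $H^1$ of every line bundle vanishes, that bundle is even rigid). For the inductive step one uses elementary transformations along a curve $C\in|L|$: given a locally free $L$-prioritary sheaf $V$ and a line-bundle quotient $W$ of $V|_C$, the kernel of $V\to i_{C*}W$ is again locally free and $L$-prioritary, and composing such a "down" transformation with a suitable "up" transformation returns $c_1$ to its original value while changing $c_2$ by one; the parameter spaces of these transformations are projective bundles over the source, hence irreducible, so irreducibility propagates up the induction. (On $\P^2$ one can instead invoke the Beilinson monad, realizing a general prioritary sheaf of character $\bv$, after a twist, as the cohomology of a monad $\OO(-1)^a\to\OO^b\to\OO(1)^c$ whose numerical type is determined by $\bv$ and the prioritary condition, the variety of such monads being irreducible.) Because the transformations along $C$ preserve local freeness and can be started from the locally free bundle $B$, this construction simultaneously produces a locally free sheaf of character $\bv$ whenever $\rk(\bv)\ge 2$.

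I expect the connectedness step to be the main obstacle: one must set up the elementary-transformation correspondence along curves in $|L|$ (or, on $\P^2$, the monad description) carefully enough that irreducibility genuinely survives each increment of $c_2$, and one must handle the base case; by comparison the smoothness reduction and the modification-based nonemptiness argument are essentially formal.
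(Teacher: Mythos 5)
This statement is not proved in the paper at all: it is quoted from Hirschowitz--Laszlo (for $\P^2$) and Walter and Coskun--Huizenga (for $\F_e$), so the comparison is between your sketch and the arguments in those references. Your smoothness reduction (prioritary kills $\Ext^2(V,V)$ via the restriction sequence to $C\in|L|$ and Serre duality) and your nonemptiness argument (a prioritary direct sum of line bundles with $\Delta\le 0$, followed by general elementary modifications at points, each raising $\Delta$ by $1/\rk$ and preserving the prioritary condition since $\Ext^1(V,\OO_p)=0$) are essentially the standard route and are fine, modulo one wrinkle: on $\F_e$ the intersection form is indefinite, so ``balanced restriction to $C$ implies $\Delta(B)\le 0$'' is not automatic (e.g.\ $\OO(E)\oplus\OO(F)$ on $\F_0$ has $\Delta=1/4$); one must choose the fiber-direction twists of the summands correctly, which works but needs to be said.

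The genuine gap is part (2), which is the actual content of the cited theorems. Your induction via elementary transformations along curves in $|L|$ only shows that the \emph{image} of an irreducible correspondence (a projective bundle over the stack with smaller $c_2$) is irreducible; it does not show that this image is dense in, or exhausts, every component of $\cP_L(\bv)$. To make the induction work you must prove that a dense open subset of \emph{every} component consists of sheaves arising from such a transformation of a prioritary sheaf with smaller $c_2$ (and you must actually treat the base case, where the minimal-discriminant prioritary stack is not simply the family of balanced direct sums). This is precisely where Walter uses the ruling in an essential way (generic splitting on fibers and the relative Beilinson/pushforward description) and where Hirschowitz--Laszlo's argument on $\P^2$ is delicate; your sketch, as you yourself acknowledge, does not supply this step. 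A second, smaller gap: your construction of a locally free prioritary sheaf of character $\bv$ for part (3) cannot come from the nonemptiness construction (elementary modification at a point destroys local freeness at that point); it is tied to the incomplete curve-transformation induction, whereas the standard fixes are either a dimension count showing the non-locally-free locus $V\mapsto(V^{\ast\ast},V^{\ast\ast}/V)$ has positive codimension in the smooth irreducible stack, or an explicit Serre-type construction of one vector bundle with the given invariants.
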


On the other hand, the nonemptinesss of the moduli spaces $M(\bv)$ is a very interesting question; for a detailed study of this question we refer the reader to the work of Dr\'ezet and Le Potier \cite{DLP,LePotier} if $X=\P^2$ or to \cite{CoskunHuizengaExistence} if $X=\F_e$.  In this paper we will mostly assume from the outset that we are working with the character $\bv$ of a stable sheaf, so the precise classification is not so important.  Of course it is true on any surface that if $\bv$ has positive rank and $\Delta(\bv) \gg 0$ then $M(\bv)$ is nonempty; see for example \cite{HuybrechtsLehn} or \cite{OGrady}.

\subsection{Cohomology of general sheaves}\label{sec:BN-prelim} We will make frequent use of Weak Brill-Noether theorems for $\PP^2$ and $\FF_e$. These theorems describe the cohomology of a general stable or prioritary vector bundle. The case of $\PP^2$ is a theorem of G\"ottsche and Hirschowitz \cite{GottscheHirschowitz}, and the case of $\FF_e$ is a theorem of Coskun and Huizenga \cite{CoskunHuizengaBN}.

\begin{theorem}[Weak Brill-Noether for $\PP^2$]\label{thm:weakBNP2}
Let $\bv$ be the Chern character of an $L$-prioritary vector bundle on $\PP^2$ with $\Delta(\bv) \geq 0$. Then the general $V \in \mathcal{P}_L(\bv)$ has at most one nonzero cohomology group.\end{theorem}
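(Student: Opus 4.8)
The plan is to reduce the statement to a cohomology-vanishing computation on a single, cleverly chosen $L$-prioritary bundle, using the fact (recorded in the discussion of prioritary sheaves) that $\mathcal{P}_L(\bv)$ is irreducible, so an open condition holds for the general member as soon as it holds for one member. Concretely, ``at most one nonzero cohomology group'' for a bundle $V$ on $\mathbb{P}^2$ means: either $h^1(V)=h^2(V)=0$, or $h^0(V)=h^2(V)=0$, or $h^0(V)=h^1(V)=0$. Since $\chi(V)$ is fixed by $\bv$, it suffices to produce, for each $\bv$, one prioritary bundle whose cohomology is concentrated in the single degree dictated by the sign of $\chi(\bv)$ (with the usual caveat near $\chi=0$). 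The main structural tool is that $\mathcal{P}_L(\bv)$ is preserved under the operations $V\mapsto V(1)$ (twisting by $\OO_{\mathbb{P}^2}(1)=\OO_{\mathbb{P}^2}(L)$) and under taking general extensions, and that semistability is not required — only the $\Ext^2(V,V(-L))=0$ condition, which is visibly an open condition that behaves well under these operations.

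The key steps, in order, are: (1) Normalize. Twisting by $\OO(k)$ changes $\nu$ by $k$ and is a bijection on prioritary sheaves, so we may assume the total slope $\nu(\bv)$ lies in a fixed fundamental domain, e.g.\ $-1<\mu(\bv)\le 0$, where one expects a prioritary bundle with no sections and no $H^2$, i.e.\ all cohomology vanishing. (2) Induct on the rank and/or the discriminant. The base cases are line bundles and direct sums of line bundles $\OO(a)\oplus\OO(b)$ with consecutive or close exponents, whose cohomology is manifestly concentrated in one degree and which are $L$-prioritary by a direct $\Ext^2$ computation. (3) Build the general bundle of character $\bv$ as a general extension
\[
0 \to A \to V \to B \to 0
\]
of prioritary bundles $A,B$ of smaller discriminant (or as a general elementary modification along a line), chosen so that $\mathrm{ch}(A)+\mathrm{ch}(B)=\bv$ and so that by induction $A$ and $B$ have cohomology in a single degree. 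One then checks that $V$ is $L$-prioritary (the class $\mathcal{P}_L$ is closed under general extensions because $\Ext^2(A,B(-L))$, $\Ext^2(A,A(-L))$, $\Ext^2(B,B(-L))$ all vanish forces $\Ext^2(V,V(-L))=0$ via the long exact sequences), and that the connecting maps in cohomology have maximal rank for a general choice of extension class, which kills the ``overlap'' cohomology and leaves $V$ with cohomology in the single expected degree. (4) Conclude by openness and irreducibility of $\mathcal{P}_L(\bv)$: the locus where $V$ has at most one nonzero cohomology group is open, and we have exhibited a point in it.

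The heart of the argument — and the main obstacle — is step (3): arranging that for a \emph{general} extension class $\xi\in\Ext^1(B,A)$ the induced maps on cohomology, e.g.\ $H^0(B)\to H^1(A)$, have maximal rank. This is where one must actually use the geometry of $\mathbb{P}^2$ rather than purely formal arguments, and it is the part that fails on more complicated surfaces. The standard way to handle it is to interpret the cup product with $\xi$ via a Koszul-type or ``multiplication map'' description and invoke a semicontinuity/base-point argument, or alternatively to use elementary modifications $0\to V\to V'\to \OO_\ell(k)\to 0$ along a line $\ell$, for which the cohomology jump is controlled by the restriction map $H^i(V')\to H^i(\OO_\ell(k))$ and can be made surjective or injective by choosing the modification generically. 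A secondary technical point is to set up the induction so that the discriminants (and ranks) of $A$ and $B$ genuinely decrease while keeping both prioritary with $\Delta\ge 0$, so that Theorem~\ref{thm:prioritaryProps}(1) guarantees $\mathcal{P}_L$ is nonempty at each stage; handling the small-discriminant and the $\chi(\bv)\approx 0$ boundary cases separately is a routine but necessary bookkeeping task.
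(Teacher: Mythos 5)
The paper does not prove Theorem \ref{thm:weakBNP2} at all: it is quoted from G\"ottsche--Hirschowitz \cite{GottscheHirschowitz}, so there is no internal argument to compare yours against. Your outer frame is the standard one and is fine: $\cP_L(\bv)$ is irreducible (Theorem \ref{thm:prioritaryProps}), ``at most one nonzero cohomology group'' is an open condition by semicontinuity, so it suffices to exhibit a single $L$-prioritary sheaf of character $\bv$ with that property. The problem is that all of the substance is concentrated in your Step (3), and as written that step both fails structurally and defers the real content of the theorem.

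The structural failure is a discriminant obstruction: an iterated extension of line bundles has the Chern character of a direct sum of line bundles, and for $\bv=\sum_i \ch\OO(a_i)$ one has $\Delta(\bv)=\frac{1}{2}\bigl(\frac{1}{r}\sum_i a_i\bigr)^2-\frac{1}{2r}\sum_i a_i^2\le 0$ by Cauchy--Schwarz, with equality only when all $a_i$ are equal. So an induction whose base cases are line bundles and whose inductive step is taking extensions can never reach any character with $\Delta(\bv)>0$, which is the main case of the theorem (already in rank one it is the classical statement that general points impose independent conditions on plane curves of given degree). To change $\Delta$ you must bring in punctual elementary modifications, rank-one sheaves $I_Z(a)$, or exceptional bundles --- and then the assertion that the relevant general map has maximal rank (the connecting map $H^0(B)\to H^1(A)$ for general $\xi\in\Ext^1(B,A)$, or the evaluation/restriction map in the modification variant) is exactly the heart of the theorem, not a routine semicontinuity or base-point remark: for a general extension class the induced map on cohomology is only a general element of the image of the linear map $\Ext^1(B,A)\to\Hom(H^0(B),H^1(A))$, and nothing formal forces that image to contain a maximal-rank element. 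Finally, a smaller slip: prioritariness of the extension also needs $\Ext^2(B,A(-L))=0$, Serre-dually $\Hom(A,B(-2H))^\ast$, which is the one vanishing among the four required that does not follow from your inductive hypotheses and is the one you omit.
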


The following statement is weaker than the full Weak Brill-Noether theorem for $\FF_e$, but it is sufficient for our purposes.

\begin{theorem}[Weak Brill Noether for $\FF_e$]\label{thm:weakBNFe}
Let $\bv$ be the Chern character of an $L$-prioritary sheaf on $\FF_e$ satisfying $\nu(\bv) \cdot F \geq -1$ and $\Delta(\bv) \geq 0$. If $\nu(\bv) \cdot E \geq -1$, then the general $V \in \mathcal{P}_L(\bv)$ has at most one nonzero cohomology group, and furthermore, $H^2(\FF_e,V) = 0$. Conversely, if $\chi(\bv) \geq 0$, then the general sheaf in $\mathcal{P}_L(\bv)$ has at most one nonzero cohomology group if and only if $\nu(\bv) \cdot E \geq -1$.
\end{theorem}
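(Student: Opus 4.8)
The plan is to combine the irreducibility of the stack $\cP_L(\bv)$ from Theorem \ref{thm:prioritaryProps} with an analysis of the ruling $\pi\colon\FF_e\to\PP^1$, whose fiber class is $L=F$. Since $h^0$, $h^1$, $h^2$ are upper semicontinuous in flat families, the locus in $\cP_L(\bv)$ where at most one cohomology group is nonzero is open, being the union of the three open loci $\{h^1=h^2=0\}$, $\{h^0=h^2=0\}$, $\{h^0=h^1=0\}$; the locus $\{H^2=0\}$ is open as well. Hence by irreducibility it suffices to exhibit, for each $\bv$ under consideration, one $L$-prioritary sheaf with the claimed cohomology. For $\rk(\bv)=1$ this is an immediate Riemann--Roch computation with a line bundle, so assume $\rk(\bv)\ge 2$, whence the general member of $\cP_L(\bv)$ is a vector bundle by Theorem \ref{thm:prioritaryProps}; also note that twisting by $\OO(aE+bF)$ preserves $L$-priority and shifts $\nu$ by $aE+bF$, so $\nu$ may be normalized to a bounded region.

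For a general $V\in\cP_L(\bv)$ I would first analyze the cohomology through the Leray spectral sequence of $\pi$. The structural input is that the general $L$-prioritary sheaf restricts to a \emph{balanced} bundle $\OO_{\PP^1}(a)^{\oplus b}\oplus\OO_{\PP^1}(a+1)^{\oplus c}$ on the general fiber; this is the basic feature of $F$-prioritary sheaves on the ruled surface $\FF_e$ underlying the irreducibility statement in Theorem \ref{thm:prioritaryProps}. Granting it, the hypothesis $\nu(\bv)\cdot F\ge -1$ forces every summand of $V|_F$ to have degree $\ge -1$, so $H^1(F,V|_F)=0$ and therefore $R^1\pi_*V$ is a torsion sheaf on $\PP^1$ (supported on the jumping fibers). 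It follows that $H^2(\FF_e,V)=H^1(\PP^1,R^1\pi_*V)=0$, giving the asserted vanishing, and that the Leray sequence degenerates:
\[
H^0(\FF_e,V)=H^0(\PP^1,\pi_*V),\qquad 0\to H^1(\PP^1,\pi_*V)\to H^1(\FF_e,V)\to H^0(\PP^1,R^1\pi_*V)\to 0.
\]
So showing that $V$ has at most one nonzero cohomology group reduces to showing that the bundle $\pi_*V$ on $\PP^1$ either vanishes or splits as $\bigoplus\OO_{\PP^1}(d_i)$ with all $d_i\ge -1$.

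The remaining content --- and, I expect, the main obstacle --- is to prove that for a general $L$-prioritary $V$ the bundle $\pi_*V$ is of this form, and to see that $\nu(\bv)\cdot E\ge -1$ is exactly what makes it possible. Numerically, the rank of $\pi_*V$ is pinned down by the balanced type of $V|_F$ and its degree by $\chi(\bv)$ together with the length of $R^1\pi_*V$; feeding these into the intersection pairing, $\nu(\bv)\cdot E\ge -1$ becomes precisely the inequality under which a balanced splitting type of $\pi_*V$ is numerically admissible. To show that the general prioritary sheaf attains it, I would induct on $\Delta(\bv)$ (and on the rank). In the base case of minimal discriminant the general $L$-prioritary bundle is an explicit direct sum of line bundles $\OO(a_iE+b_iF)$, whose cohomology and pushforward are read off from the formula for $P$ recorded above. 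For the inductive step one performs an elementary modification $0\to V'\to V\to Q\to 0$ with $Q$ of length one supported at a general point of a general fiber: this decreases $\Delta$ by a controlled amount, and one must verify that $V'$ is again general in its own prioritary stack and that the long exact sequences on $\FF_e$ and on $\PP^1$ transport the vanishing. The delicate point is that such a modification could in principle make $\pi_*V$ jump to an unbalanced bundle; controlling this for a general choice of modification point is where the real work lies. (One may instead induct directly on $\FF_e$ via the fiber-restriction sequence $0\to V(-F)\to V\to V|_F\to 0$ and its $\PP^1$-analogue, but the same difficulty recurs.)

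For the converse, assuming $\chi(\bv)\ge 0$ and $\nu(\bv)\cdot E\le -2$, I would show that the general $V\in\cP_L(\bv)$ has two nonzero cohomology groups. By the reduction above this amounts to showing that $\pi_*V$ is genuinely unbalanced --- carrying both a summand of degree $\ge 0$ and one of degree $\le -2$ --- so that $H^0(\FF_e,V)\ne 0$ and $H^1(\FF_e,V)\ne 0$. For the negative summand I would exploit the restriction sequence $0\to V(-E)\to V\to V|_E\to 0$ along the section $E$: the hypothesis $\nu(\bv)\cdot E\le -2$ makes $V|_E$ a sheaf on $E\cong\PP^1$ of average degree $\le -2$, forcing cohomology along $E$ that the sequence cannot absorb; the bookkeeping here runs parallel to the Leray analysis above, with the roles of $E$ and $F$ exchanged. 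For $e=0$, where $E$ and $F$ are interchangeable rulings, both implications follow from the first direction applied to the two projections in turn.
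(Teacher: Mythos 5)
Your overall framework (irreducibility of $\cP_L(\bv)$ plus semicontinuity, so that it suffices to exhibit a single prioritary sheaf with the expected cohomology) is the right one, and it matches the strategy of the source: note that the paper under review does not prove this theorem at all, but quotes it from \cite{CoskunHuizengaBN}, where the decisive step is an explicit construction (a general elementary modification of a carefully chosen direct sum of line bundles, balanced along the fiber direction) whose cohomology is verified directly. That is exactly the step your proposal defers. The claim that for general $V$ the sheaf $\pi_*V$ splits with all degrees at least $-1$ --- equivalently, that your induction on $\Delta(\bv)$ via elementary modifications never unbalances the pushforward --- is the actual content of the theorem, and you explicitly leave it open (``where the real work lies''). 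In addition, your Leray reduction is incomplete as stated: since $H^1(\FF_e,V)$ receives both $H^1(\PP^1,\pi_*V)$ and $H^0(\PP^1,R^1\pi_*V)$, it is not enough that $\pi_*V$ have all summands of degree $\geq -1$; whenever $h^0(V)\neq 0$ you must also show $R^1\pi_*V=0$, i.e.\ that the general $V$ has no jumping fibers on which the restriction acquires a summand of degree $\leq -2$, and nothing in the proposal rules this out.

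The converse direction has a further gap for $e\geq 1$: there is no symmetry exchanging $E$ and $F$ on $\FF_e$, since $E$ is a single rigid section rather than a fiber of a second ruling, so ``bookkeeping parallel to the Leray analysis with the roles of $E$ and $F$ exchanged'' is not available. Concretely, from $0\to V(-E)\to V\to V|_E\to 0$ one gets $H^1(V)\to H^1(E,V|_E)\to H^2(V(-E))$, so the nonvanishing of $H^1(E,V|_E)$ forced by $\nu(\bv)\cdot E\leq -2$ only yields $h^1(V)\neq 0$ after controlling $H^2(V(-E))$; and you cannot invoke the first part of the theorem for $\bv(-E)$, since $\nu(\bv(-E))\cdot F$ may be as small as $-2$, violating the hypothesis $\nu\cdot F\geq -1$. (The step $h^1\neq 0\Rightarrow$ two nonzero groups is fine, since $\chi\geq 0$ and $h^2=0$ give $h^0\geq h^1$.) As written, then, the proposal is a plausible plan whose two essential verifications --- the behavior of cohomology under the elementary modifications (or the structure of $\pi_*V$) for general prioritary $V$, and the vanishing needed in the converse --- are missing.
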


\subsection{Globally generated characters}\label{sec:gg-prelim} In contrast to ampleness, global generation is not an open property in families: there can be special globally generated bundles that are globally generated because they have ``extra sections'' coming from nonzero higher cohomology (see \cite[Example 2.13]{CoskunHuizengaBN}).
However, in families of sheaves with no higher cohomology, global generation \emph{is} an open property.

Suppose $\bv\in K(X)$ is a character with $\Delta(\bv) \geq 0$ such that the general sheaf in $\cP_L(\bv)$ is a globally generated  vector bundle.  Since the restriction of a globally generated bundle to a curve is globally generated, it follows that $\nu(\bv)$ is nef.  By the Weak Brill-Noether theorems, the general $V\in \cP_L(\bv)$ has only one nonzero cohomology group, which must be $H^0(V)$ since $V$ is globally generated.  

The characters $\bv\in K(X)$ such that the general bundle $V\in \cP_L(\bv)$ is globally generated were fully classifed by Bertram, Goller, and Johnson \cite{BertramGollerJohnson} and Coskun and Huizenga \cite{CoskunHuizengaBN} in the case of $\PP^2$, and by Coskun and Huizenga \cite{CoskunHuizengaBN} in the case of $\FF_e$.  We will not need the full strength of the classification, so we give a simple criterion for global generation at the end of this section.

\begin{theorem}[\protect{\cite[Cor. 5.3]{CoskunHuizengaBN}}]\label{thm:CH_globalgenerationP2}
Let $\bv\in K(\P^2)$ be a Chern with $\Delta(\bv) \geq 0$ and $\rk(\bv) \geq 2$. Then the general $V \in \mathcal{P}_L(\bv)$ is globally generated if and only if one of the following holds.
\begin{enumerate}
        \item We have $\mu(\bv) = 0$ and $\bv = \rk(\bv) \ch \OO$.
        \item We have $\mu(\bv) > 0$ and $\chi(\bv(-1)) \geq 0$.
        \item We have $\mu(\bv) > 0$, $\chi(\bv(-1)) < 0$, and $\chi(\bv) \geq \rk(\bv) + 2$.
        \item We have $\mu(\bv) > 0$, $\chi(\bv(-1)) < 0$, $\chi(\bv) = \rk(\bv) + 1$, and $\bv = (\rk(\bv) + 1) \ch \OO - \ch \OO(-2)$.
\end{enumerate}
\end{theorem}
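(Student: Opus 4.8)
\emph{Proof idea.} The plan is to treat necessity and sufficiency separately; necessity is the easier direction. Suppose the general $V\in\mathcal{P}_L(\bv)$ is globally generated. Restricting to a general line shows $c_1(\bv)\cdot H\ge 0$, i.e.\ $\mu(\bv)\ge 0$. If $\mu(\bv)=0$ then $c_1(\bv)=0$ (Picard rank one), and a globally generated bundle with trivial determinant on a connected projective variety is trivial: $\rk(\bv)$ general global sections have a nonvanishing wedge in $H^0(\det V)=H^0(\OO)$, so they are everywhere linearly independent and trivialize $V$; this gives case~(1). If $\mu(\bv)>0$, then since $V$ has a nonzero section Theorem~\ref{thm:weakBNP2} forces $H^1(V)=H^2(V)=0$ and $h^0(V)=\chi(\bv)>0$. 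Moreover $h^0(V)=\rk(\bv)$ is impossible, since then the evaluation $\OO^{\rk(\bv)}\to V$ would be a surjection of vector bundles of equal rank, hence an isomorphism, contradicting $\mu(\bv)>0$. So $\chi(\bv)\ge\rk(\bv)+1$. If $\chi(\bv)=\rk(\bv)+1$, the general $V$, which is locally free by Theorem~\ref{thm:prioritaryProps}, fits in an exact sequence $0\to\OO(-d)\to\OO^{\rk(\bv)+1}\to V\to 0$ with $d=c_1(\bv)\cdot H\ge 1$, and Riemann--Roch forces $\chi(\OO_{\P^2}(-d))=0$, hence $d\in\{1,2\}$; using $\chi(\bv)-\chi(\bv(-1))=\rk(\bv)+c_1(\bv)\cdot H$, the case $d=1$ has $\chi(\bv(-1))=0$ and puts us in case~(2), while $d=2$ gives $\bv=(\rk(\bv)+1)\ch\OO-\ch\OO(-2)$ with $\chi(\bv(-1))=-1$, which is case~(4). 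Finally if $\chi(\bv)\ge\rk(\bv)+2$, then $\bv$ falls in case~(2) or~(3) according to the sign of $\chi(\bv(-1))$. This finishes necessity.

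For sufficiency, observe that in cases~(2)--(4) one has $\chi(\bv)>0$, so Theorem~\ref{thm:weakBNP2} gives $H^1(V)=H^2(V)=0$ for the general $V\in\mathcal{P}_L(\bv)$. Since global generation is an open condition on the locus of $\mathcal{P}_L(\bv)$ where higher cohomology vanishes (\S\ref{sec:gg-prelim}), and $\mathcal{P}_L(\bv)$ is irreducible (Theorem~\ref{thm:prioritaryProps}), it suffices to exhibit one prioritary bundle of character $\bv$ that is globally generated with $H^1=H^2=0$. Case~(1) is $V=\OO^{\rk(\bv)}$. Case~(4) is the cokernel of a general map $\OO(-2)\to\OO^{\rk(\bv)+1}$: since $\rk(\bv)+1\ge 3$ general quadrics on $\P^2$ have no common zero, this is a nowhere-degenerate bundle map, so $V$ is locally free; being a quotient of $\OO^{\rk(\bv)+1}$ it is globally generated with $H^1=H^2=0$, and a direct computation from the two-term resolution shows it is prioritary.

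For cases~(2) and~(3) I would induct on $\rk(\bv)$. If $W$ is a globally generated prioritary bundle of character $\bv-\ch\OO$ with $H^1(W)=H^2(W)=0$, then $W\oplus\OO$ is globally generated with vanishing higher cohomology, and --- after a Serre-duality check that the mixed groups $\Ext^2(W,\OO(-1))$ and $\Ext^2(\OO,W(-1))$ vanish --- prioritary; so it suffices to handle the character $\bv-\ch\OO$. One checks that $\bv-\ch\OO$ again lies in case~(2) or~(3): its discriminant is positive, $\chi((\bv-\ch\OO)(-1))=\chi(\bv(-1))$, and $\chi(\bv-\ch\OO)-\rk(\bv-\ch\OO)=\chi(\bv)-\rk(\bv)$. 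Iterating reduces everything to the rank-one case, where $V=I_Z(d)$ for a general zero-dimensional scheme $Z$ and the global generation of a general twisted ideal sheaf is classical.

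The main obstacle I anticipate is controlling this last reduction. The Serre-duality check identifies $\Ext^2(W,\OO(-1))$ with $H^0(W(-2))^\vee$, which need not vanish when $\mu(\bv)$ is large relative to $\Delta(\bv)$; in that regime the inductive construction $W\oplus\OO$ fails to be prioritary, and one must instead invoke directly the fact that the general bundle of a given character becomes globally generated --- indeed very ample --- once the slope is large enough. Organizing these cases so that the induction terminates, together with pinning down the exact classical statement needed in the rank-one base case, is where the finer structural results on resolutions of general sheaves from \cite{CoskunHuizengaBN} become essential.
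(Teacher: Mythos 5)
First, a point of comparison: the paper does not prove Theorem~\ref{thm:CH_globalgenerationP2} at all --- it is quoted from \cite[Cor.~5.3]{CoskunHuizengaBN} as background --- so your proposal has to stand on its own. Your necessity direction is essentially correct (restriction to a line, the trivial-determinant argument for $\mu=0$, weak Brill--Noether forcing $h^0=\chi\ge \rk(\bv)+1$, and the kernel-line-bundle analysis when $\chi=\rk(\bv)+1$), and your sufficiency arguments for cases (1) and (4), together with the observation that it suffices to exhibit one globally generated prioritary bundle with vanishing higher cohomology in the irreducible stack $\mathcal{P}_L(\bv)$, are sound.

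The genuine gap is the sufficiency of cases (2) and (3), and it is worse than your closing paragraph concedes. For the induction via $W\oplus\OO$ you need $\Ext^2(W,\OO(-1))\cong H^0(W(-2))^\ast=0$. But for \emph{any} globally generated $W$ one has $\Hom(W,\OO(-1))=0$, hence $h^2(W(-2))=0$ and $h^0(W(-2))\ge\chi(W(-2))$; so whenever $\chi(\bv(-2))>0$ the prioritary check fails for every globally generated $W$ of the relevant character, not just the general one. Moreover $\chi(\OO(-2))=0$, so $\chi(\cdot(-2))$ is unchanged when you subtract $\ch\OO$: once $\chi(\bv(-2))>0$ (a large sub-regime of case (2), since $\chi(\bv(-2))>0$ forces $\chi(\bv(-1))>0$), the obstruction persists at every stage and the induction never gets off the ground. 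Your fallback --- that the general bundle is globally generated, even very ample, once the slope is large --- is precisely the assertion to be proved in that regime, and no argument is given; a complete proof there needs a direct construction (e.g.\ a Gaeta-type resolution of the general prioritary sheaf, or a Castelnuovo--Mumford $0$-regularity argument showing $H^1(V(-1))=H^2(V(-2))=0$), neither of which appears. Separately, even in the favorable regime $\chi(\bv(-2))\le 0$ your induction terminates at rank one, outside the hypotheses of the theorem, so the base case --- global generation of a general $I_Z(d)$ with the prescribed invariants, e.g.\ cubics through $7$ general points in the case-(3) analogue --- is a nontrivial interpolation statement that must be proved rather than waved at as classical. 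As written, sufficiency is established only for cases (1) and (4), and conditionally (on the rank-one base case) for the part of cases (2)--(3) with $\chi(\bv(-2))\le 0$.
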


\begin{theorem}[\protect{\cite[Thm. 5.1]{CoskunHuizengaBN}}]\label{thm:CH_globalgenerationFe}
Let $\bv\in K(\F_e)$ be a Chern character where $e \geq 1$, $\Delta(\bv) \geq 0$, and $\rk(\bv) \geq 2$. Suppose $\nu(\bv)$ is nef.  Then the general $V \in \mathcal{P}_L(\bv)$ is globally generated if and only if one of the following holds.
\begin{enumerate}
        \item We have $\nu(\bv) \cdot F = 0$, and there exist integers $a,m>0$ such that
        \[
        \bv = (\rk(\bv) - m)(\ch\OO(aF)) + m\ch (\OO((a+1)F)).
        \]
        \item We have $\nu(\bv) \cdot F > 0$ and $\chi(\nu(-F)) \geq 0$.
        \item We have $\nu(\bv) \cdot F >0$, $\chi(\nu(-F)) < 0$, and $\chi(\bv) \geq \rk(\bv) + 2$.
        \item We have $e=1$, $\nu(\bv) \cdot F > 0$, $\chi(\bv(-F)) < 0$, $\chi(\bv) = \rk(\bv) + 1$, and
        \[
        \bv = (\rk(\bv) + 1)(\ch (\OO)) - \ch(\OO(-2E-2F)).
        \]
\end{enumerate}
\end{theorem}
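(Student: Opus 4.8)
The plan is to prove both implications of the stated equivalence, reading the list (1)--(4) as precisely the numerical conditions forced by the Weak Brill--Noether Theorem \ref{thm:weakBNFe} together with global generation of the restrictions of $V$ to curves. Throughout write $r=\rk(\bv)$.

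\emph{Necessity.} Suppose the general $V\in\mathcal P_L(\bv)$ is globally generated. Since $\nu(\bv)$ is nef, $\nu(\bv)\cdot E\ge 0$ and $\nu(\bv)\cdot F\ge 0$, so Theorem \ref{thm:weakBNFe} shows $V$ has a single nonzero cohomology group and $H^2(V)=0$; a globally generated bundle of positive rank has sections, so that group is $H^0$, whence $H^1(V)=H^2(V)=0$ and $\chi(\bv)=h^0(V)\ge r$. Because $L$-prioritary families restrict to complete families on a fiber, $V$ restricts on the general fiber $f\cong\PP^1$ to a balanced bundle $\OO(a)^{\oplus(r-m)}\oplus\OO(a+1)^{\oplus m}$ of degree $r\,\nu(\bv)\cdot F$. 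If $\nu(\bv)\cdot F=0$ then $V|_f\cong\OO_f^{\oplus r}$ for general $f$, and cohomology and base change forces $V\cong\pi^*W$ with $W$ a balanced globally generated bundle on $\PP^1$; this is case (1). If $\nu(\bv)\cdot F>0$, then $\chi(\bv)=r$ would force $V\cong\OO^{\oplus r}$ and hence $\nu(\bv)=0$, a contradiction, so $\chi(\bv)\ge r+1$; if $\chi(\bv)\ge r+2$ then $\bv$ already lies in case (2) or (3) according to the sign of $\chi(\bv(-F))$, so the only remaining possibility is $\chi(\bv)=r+1$ with $\chi(\bv(-F))<0$, and then the syzygy bundle $M_V$ in $0\to M_V\to\OO^{\oplus(r+1)}\to V\to 0$ is a line bundle with $\chi(M_V)=0$ and $H^i(M_V)=0$ for all $i$; enumerating such line bundles with $\nu(\bv)$ nef and comparing $\dim\mathcal P_L(\bv)$ with the dimension of the family of such quotients shows that the only one whose general prioritary sheaf is globally generated is $\bv=(r+1)\ch\OO-\ch\OO(-2E-2F)$, which forces $e=1$ --- case (4).

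\emph{Sufficiency.} By irreducibility of $\mathcal P_L(\bv)$ (Theorem \ref{thm:prioritaryProps}) and openness of global generation in families with no higher cohomology (\S\ref{sec:gg-prelim}), it suffices to exhibit one globally generated $L$-prioritary bundle of character $\bv$ in each case. Case (1) is the pullback $\pi^*(\OO(a)^{\oplus(r-m)}\oplus\OO(a+1)^{\oplus m})$, which is globally generated and $L$-prioritary. In case (2), where $\chi(\bv(-F))\ge 0$, Theorem \ref{thm:weakBNFe} applied to $\bv$ gives $H^1(V)=H^2(V)=0$, and applied to $\bv(-F)$ --- which still satisfies its hypotheses, as $\Delta(\bv(-F))=\Delta(\bv)\ge 0$, $(\nu(\bv)-F)\cdot F\ge 0$, and $(\nu(\bv)-F)\cdot E\ge -1$ --- together with $\chi(\bv(-F))\ge 0$ gives $H^1(V(-F))=0$. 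Then for every fiber $f$ the sequence $0\to V(-F)\to V\to V|_f\to 0$ gives $H^0(V)\twoheadrightarrow H^0(V|_f)$, and $V|_f$, being balanced of non-negative degree on $\PP^1$, is globally generated; hence $V$ is globally generated along every fiber, i.e. everywhere. In case (3), where $\chi(\bv(-F))<0$ but $\chi(\bv)\ge r+2$, this argument fails since $H^1(V(-F))\ne 0$, so I induct on $r$: for $r\ge 3$ a general $V$ has a nowhere-vanishing section (its zero scheme has expected codimension $r\ge 3$ on the surface, hence is empty), giving $0\to\OO\to V\to V'\to 0$ with $V'$ a general $L$-prioritary bundle of character $\bv-\ch\OO$ again falling under case (2) or (3); since $H^1(\OO)=0$, $V$ is globally generated once $V'$ is, and the base of the induction is case (2). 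The rank-two cases reduce, via the Serre sequence $0\to\OO\to V\to\cI_Z\otimes\det V\to 0$ of a general section, to global generation of the twisted ideal sheaf of a general finite set of points, which the bound $\chi(\bv)\ge r+2$ guarantees. Finally, in case (4) I take the pullback along $\FF_1=\Bl_p\PP^2\to\PP^2$ of the bundle of Theorem \ref{thm:CH_globalgenerationP2}(4): it is globally generated, and being stable of positive slope it is $L$-prioritary.

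\emph{Main obstacle.} The delicate region is the boundary among cases (2), (3), (4). The clean fiber-restriction/section-lifting argument only reaches $\chi(\bv(-F))\ge 0$; entering the regime $\chi(\bv(-F))<0$ forces a rank induction, which in turn requires controlling the zero locus of a general section of a general prioritary bundle and verifying that the resulting extension stays prioritary, while the low-rank cases need the separate reduction to twisted ideals of general points. Dually, the necessity argument that no $\bv$ with $\chi(\bv)=r+1$ and $\chi(\bv(-F))<0$ other than the character in (4) admits a globally generated general member rests on a careful analysis of the rank-one syzygy bundle together with a dimension count against $\mathcal P_L(\bv)$, and it is exactly there that the special role of $\FF_1$ and the precise Chern character in (4) emerge.
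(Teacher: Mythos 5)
This theorem is not proved in the paper you were given: it is imported verbatim from \cite[Thm.~5.1]{CoskunHuizengaBN} as a preliminary, so there is no internal proof to compare against and your argument has to stand on its own. Its skeleton is sensible in places (Weak Brill--Noether plus restriction to fibers for case (2), the syzygy line bundle at $\chi(\bv)=\rk(\bv)+1$, the pullback from $\P^2$ for case (4)), but the hard steps have genuine gaps. In the sufficiency of case (2) you assert that $V|_f$ is balanced, hence globally generated, on \emph{every} fiber $f$; completeness of the restricted family only controls the splitting on a fixed (or general) fiber, and jumping fibers occur as soon as $\Delta(\bv)$ is large. You need a separate argument on the special fibers, e.g.\ a codimension count in the style of the flag-scheme lemma of \S\ref{sec:ample_globallygenerated} showing that the locus of prioritary $V$ with $\Hom(V|_f,\OO_f(-1))\neq 0$ for some fiber has positive codimension, or a direct vanishing of $R^1\pi_*(V(-E))$.

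More seriously, the sufficiency of case (3) --- the regime $\chi(\bv(-F))<0$, $\chi(\bv)\geq \rk(\bv)+2$, which is exactly the hard content of the classification --- is not established. The claim that a general section of a general $V$ is nowhere vanishing because its zero scheme has ``expected codimension $r\geq 3$'' presupposes positivity (essentially global generation) of $V$, which is what you are proving, so the induction is circular as set up; moreover $V'=V/\OO$ is not shown to be \emph{general} in $\cP_L(\bv-\ch\OO)$ (a dominance argument is needed), and if you instead run the induction forwards, a general extension $0\to\OO\to V\to V'\to 0$ need not be $F$-prioritary, since $\Ext^2(V',\OO(-F))\cong H^0(V'(K_X+F))^\ast$ can be nonzero once $\nu(\bv)\cdot F$ is large. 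The rank-two base case via the Serre construction, ``which the bound $\chi\geq r+2$ guarantees,'' is likewise unsubstantiated: global generation of $I_Z\otimes\det V$ for the relevant general $Z$ is a nontrivial interpolation-type statement, and global generation of $V$ at the points of $Z$ does not follow formally from it. Two smaller points: the enumeration behind case (4) necessity is only gestured at, although it is in fact a finite numerical check (a nef $D$ with $\chi(\OO(-D))=0$, all cohomology of $\OO(-D)$ vanishing, and $\chi(\OO(-D-F))>0$ forces $e=1$, $D=2E+2F$; no dimension count is needed); and in case (4) sufficiency the pullback $\sigma^*W$ along $\sigma\colon\F_1\to\P^2$ is not prioritary ``because it is stable,'' though this slip is repairable, e.g.\ $\Ext^2(\sigma^*W,\sigma^*W(-F))\cong\Ext^2_{\P^2}(W,W(-H))=0$ by the projection formula since $R\sigma_*\OO_{\F_1}(-F)\cong\OO_{\P^2}(-1)$.
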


The statement for $\FF_0$ is slightly different.

\begin{theorem}[\protect{\cite[Thm. 5.2]{CoskunHuizengaBN}}]\label{thm:CH_globalgenerationF0}
Let $\bv\in K(\F_0)$ be a Chern character with $\Delta(\bv) \geq 0$ and $\rk(\bv) \geq 2$.  Suppose  $\nu(\bv)$ is nef. Then the general $V \in \mathcal{P}_L(\bv)$ is globally generated if and only if one of the following holds.
\begin{enumerate}
        \item We have $\nu(\bv) \cdot E = 0$ or $\nu(\bv) \cdot F = 0$ and there are integers $a,m \geq 0$ such that
\[
\bv = (\rk(\bv)-m)\ch \OO(aE) + m\ch \OO((a+1)E)
\]
or
\[
\bv = (\rk(\bv)-m)\ch \OO(aF) + m\ch \OO((a+1)F).
\]
\item We have $\nu(\bv) \cdot E >0$, $\nu(\bv) \cdot F >0$, but $\chi(\bv(-E)) \geq 0$ or $\chi(\bv(-F)) \geq 0$.
\item We have $\nu(\bv) \cdot E>0$, $\nu(\bv) \cdot F > 0$, $\chi(\bv(-E)) < 0$, $\chi(\bv(-F)) < 0$, and $\chi(\bv) \geq \rk(\bv) + 2$.
\end{enumerate}
\end{theorem}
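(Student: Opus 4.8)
The plan is to convert the statement ``the general $V\in\cP_L(\bv)$ is globally generated'' into cohomological conditions on $\bv$ and nearby characters, and then to decide those conditions using Riemann--Roch together with Weak Brill--Noether on $\F_0$ (Theorem~\ref{thm:weakBNFe} with $e=0$, where by the symmetry between $E$ and $F$ on $\F_0$ one imposes $\nu(\bv)\cdot E\ge -1$ and $\nu(\bv)\cdot F\ge -1$). Two organizing principles are used throughout. First, since $\cP_L(\bv)$ is irreducible (Theorem~\ref{thm:prioritaryProps}) and global generation is open on the locus of bundles with $H^1=H^2=0$, for sufficiency it suffices to produce a single globally generated prioritary bundle of character $\bv$ with no higher cohomology, and for necessity it suffices to show the general such bundle is not generated. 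Second, if $V$ is a globally generated vector bundle then $H^0(V)\otimes\OO\to V$ is surjective with locally free kernel $M_V$ of rank $h^0(V)-\rk(\bv)$, so $V=\coker(M_V\hookrightarrow\OO^{\oplus h^0(V)})$; conversely any quotient of a direct sum of nef line bundles on $\F_0$ is globally generated, and a general injection of vector bundles on a surface into such a sum is automatically a subbundle inclusion once $\rk(\bv)+1\ge 3$ (the degeneracy locus has expected codimension $\ge 3$). Thus global generation of the general $V$ is governed by whether $M_V$ can be realized by a general bundle of prescribed invariants.

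\emph{Necessity.} Suppose the general $V\in\cP_L(\bv)$ is globally generated. Then $\nu(\bv)$ is nef by hypothesis (and is in any case forced by restricting to the two rulings). Since a globally generated bundle has nonzero $H^0$ while the general $V$ has at most one nonzero cohomology group and $H^2(V)=0$ by Weak Brill--Noether, we get $H^1(V)=H^2(V)=0$ and $\chi(\bv)=h^0(V)\ge \rk(\bv)$. If $\nu(\bv)\cdot E=0$ (resp. $\nu(\bv)\cdot F=0$), then $V$ is trivial on every curve of the corresponding ruling, hence pulled back from $\P^1$; being general it is the pullback of a balanced bundle, and global generation forces the twists to be nonnegative, which is exactly case (1). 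If $\nu(\bv)\cdot E>0$ and $\nu(\bv)\cdot F>0$, we must show that $\chi(\bv(-E))<0$ and $\chi(\bv(-F))<0$ together force $\chi(\bv)\ge \rk(\bv)+2$. If not, then $\chi(\bv)\in\{\rk(\bv),\rk(\bv)+1\}$. When $\chi(\bv)=\rk(\bv)$, $M_V$ has rank $0$, so $M_V=0$ and $V\cong\OO^{\oplus\rk(\bv)}$, contradicting $\nu(\bv)\cdot E>0$. When $\chi(\bv)=\rk(\bv)+1$, $M_V$ is a line bundle with $c_1(M_V)=-c_1(\bv)$, hence $M_V=\OO(-c_1(\bv))$; comparing $\ch_2$ forces $\Delta(\bv)=\tfrac12(\rk(\bv)+1)\nu(\bv)^2$, and then Riemann--Roch forces $c_1(\bv)\cdot E=1$ or $c_1(\bv)\cdot F=1$, which gives $\chi(\bv(-F))=0$ or $\chi(\bv(-E))=0$ --- contradicting that both are negative. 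Hence $\bv$ lies in case (1), (2), or (3).

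\emph{Sufficiency.} In case (1), the pullback from $\P^1$ of a balanced bundle with nonnegative twists is globally generated, prioritary, and has no higher cohomology, so irreducibility of $\cP_L(\bv)$ finishes. In cases (2) and (3) one realizes the general $V$ as the cokernel of a general subbundle inclusion $M\hookrightarrow\bigoplus_i\OO(D_i)$ with each $D_i$ nef on $\F_0$, so that $V$ is automatically globally generated; the multiplicities of the various $\OO(D_i)$ are chosen so that $M$ is a general prioritary bundle of the complementary invariants, which requires $\Delta(M)\ge 0$ and that a general map $M\to\bigoplus_i\OO(D_i)$ exist and be injective as a bundle map. In case (3), where $\chi(\bv)\ge\rk(\bv)+2$, one may take $\bigoplus_i\OO(D_i)=\OO^{\oplus\chi(\bv)}$: then $M$ has rank $\chi(\bv)-\rk(\bv)\ge 2$, which leaves enough room for a general injection into a trivial bundle, and one checks directly that the cokernel is prioritary. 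In case (2), where $\chi(\bv(-E))\ge 0$ or $\chi(\bv(-F))\ge 0$, one instead uses a resolution adapted to the ruling whose twist has nonnegative Euler characteristic: after arranging (via Weak Brill--Noether) that, say, $H^1(V(-F))=0$, the sequence $0\to V(-F)\to V\to V|_F\to 0$ shows $H^0(V)\twoheadrightarrow H^0(V|_F)$, so global generation of $V$ follows from global generation of $V|_F$ on every fiber of $\pi\colon\F_0\to\P^1$, which holds for the general $V$ because $\chi(\bv(-F))\ge 0$ controls $\Delta(\bv)$ and hence the jumping behavior of the restriction to fibers. In each case a final Kodaira--Spencer computation, using the cohomology supplied by Weak Brill--Noether, shows that the bundles so constructed dominate $\cP_L(\bv)$, so they are the general member.

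\emph{Main obstacle.} The delicate point is the sufficiency of case (3) --- and of the ``unbalanced'' part of case (2) --- namely producing a genuinely general globally generated bundle rather than an arbitrary one. This requires choosing the nef line bundles $\OO(D_i)$ and the invariants of $M$ so that $M$ both exists ($\Delta(M)\ge 0$) and admits a general injective map into $\bigoplus_i\OO(D_i)$ (which forces control on $M^\vee$ and is the reason the crude choice $\OO^{\oplus\chi(\bv)}$ is available only once $\chi(\bv)\ge\rk(\bv)+2$), verifying the cokernel is locally free and prioritary, and carrying out the dimension count or Kodaira--Spencer surjectivity showing these cokernels sweep out a dense open substack of $\cP_L(\bv)$. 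On the necessity side the analogous subtlety is much milder, since the cases $\chi(\bv)\in\{\rk(\bv),\rk(\bv)+1\}$ collapse to pure numerics via the structure of $M_V$; the $\F_0$ classification is ``cleaner'' than the $\F_e$ ($e\ge 1$) or $\P^2$ classifications precisely because there is no exceptional globally generated character with $\chi(\bv)=\rk(\bv)+1$ in the interior of the nef cone.
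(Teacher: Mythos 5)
This theorem is imported from \cite{CoskunHuizengaBN} (Thm.~5.2); the paper you are commenting on contains no proof of it, so I can only judge your argument on its own terms. Your necessity direction is essentially sound: the reduction to pullbacks when $\nu(\bv)\cdot E=0$ or $\nu(\bv)\cdot F=0$, and the analysis of the evaluation kernel $M_V$ when $\chi(\bv)\in\{\rk(\bv),\rk(\bv)+1\}$, are correct; in particular the key point that $M_V\cong\OO(-c_1(\bv))$ forces $\chi(\bv)=(\rk(\bv)+1)-(1-c_1(\bv)\cdot F)(1-c_1(\bv)\cdot E)$, hence $c_1(\bv)\cdot E=1$ or $c_1(\bv)\cdot F=1$ and so $\chi(\bv(-F))=0$ or $\chi(\bv(-E))=0$, is exactly the clean reason $\F_0$ has no exceptional $\chi=\rk+1$ case. (The remark about $\Delta(\bv)=\tfrac12(\rk(\bv)+1)\nu(\bv)^2$ is a harmless detour.)

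The sufficiency direction has genuine gaps at precisely the places you label the ``main obstacle,'' and naming them does not fill them. In case (2) you need, for the general $V$, both that $H^0(V)\to H^0(V|_C)$ is surjective and that $V|_C$ is globally generated for \emph{every} fiber $C$, not just the general one; the first follows from Weak Brill--Noether applied to $\bv(-F)$, but the second requires excluding jumping fibers with a negative summand, which needs a codimension count in a complete family (a Le Potier--type estimate as in \S5 of this paper), and ``$\chi(\bv(-F))\ge 0$ controls $\Delta(\bv)$ and hence the jumping behavior'' is not an argument. In case (3), with $M$ a general prioritary bundle of character $\chi(\bv)\ch\OO-\bv$ and a general map $M\to\OO^{\oplus\chi(\bv)}$, you must actually verify: (i) $\Delta(M)\ge 0$ so $M$ exists; (ii) $M^\vee$ (equivalently $\sHom(M,\OO)$) is globally generated, so that a Bertini-type result yields a subbundle inclusion with locally free cokernel --- but invoking the global generation criterion of Corollary \ref{cor:ggcriterion} here is circular, since that criterion is a consequence of the theorem being proved; you would need to deduce it from your case (2) and check that the dual kernel character really satisfies those hypotheses; and (iii) the resulting $V$ has $h^1(V)=h^2(V)=0$, i.e.\ $h^2(M)=0$, without which the ``global generation is open among bundles with no higher cohomology, plus irreducibility of $\cP_L(\bv)$'' step collapses. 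Point (iii) is not cosmetic: the same cokernel construction applied to $\OO(-2E-2F)\to\OO^{\oplus(\rk+1)}$ produces globally generated prioritary bundles of a character the theorem \emph{excludes} (there $h^1\neq 0$, the bundle is generated only thanks to extra sections, and the general member is not globally generated), so an unverified construction of one globally generated bundle proves nothing. None of (i)--(iii) is checked, the needed Riemann--Roch/Weak Brill--Noether hypotheses (e.g.\ $\nu\cdot E,\nu\cdot F\ge -1$ for the relevant twists) do not obviously hold for all case-(3) characters, and the concluding ``Kodaira--Spencer computation'' is only announced, not performed. As written, the proposal establishes necessity but not sufficiency.
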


We can also give the following criterion for global generation which uniformly combines part (2) of the three preceding results.

\begin{corollary}\label{cor:ggcriterion}
Let $X = \P^2$ or $\F_e$ and let $\bv\in K(X)$ be a Chern character with $\Delta(\bv) \geq 0$ and $\rk(\bv) \geq 2$. Suppose $\nu(\bv)$ is big and nef.  If $$\chi(\bv(-L)) \geq 0,$$ then the general $V\in \mathcal{P}_L(\bv)$ is globally generated.
\end{corollary}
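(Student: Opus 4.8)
The plan is to reduce the statement, case by case, to part~(2) of the relevant global-generation classification: Theorem~\ref{thm:CH_globalgenerationP2} when $X = \P^2$, Theorem~\ref{thm:CH_globalgenerationFe} when $X = \F_e$ with $e \geq 1$, and Theorem~\ref{thm:CH_globalgenerationF0} when $X = \F_0$. These theorems apply because $\Delta(\bv) \geq 0$ and $\rk(\bv) \geq 2$ by hypothesis. In each of them part~(2) guarantees that the general $V \in \mathcal{P}_L(\bv)$ is globally generated provided that (i) $\nu(\bv)$ pairs strictly positively with the curve class $L$ (and, in the $\F_0$ case, with the other ruling as well), and (ii) $\chi(\bv(-L)) \geq 0$. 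Condition (ii) is precisely our hypothesis --- it is $\chi(\bv(-1))$ on $\P^2$ and $\chi(\bv(-F))$ on the Hirzebruch surfaces --- so the only thing left to verify is that the assumption that $\nu(\bv)$ is big and nef forces (i).

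I would then check (i) in the three cases. For $\P^2$, $L = H$ and $\Pic(\P^2)\otimes\QQ = \QQ H$, so $\nu(\bv)$ big and nef is the same as $\mu(\bv) > 0$, which is exactly the extra hypothesis of Theorem~\ref{thm:CH_globalgenerationP2}(2). For $\F_0$, $L = F$ and the nef cone is spanned by $E$ and $F$ with $E^2 = F^2 = 0$, $E \cdot F = 1$; writing $\nu(\bv) = aE + bF$ with $a,b \geq 0$, bigness gives $\nu(\bv)^2 = 2ab > 0$, hence $a,b > 0$, so $\nu(\bv)\cdot E = b > 0$ and $\nu(\bv)\cdot F = a > 0$, which is the hypothesis of Theorem~\ref{thm:CH_globalgenerationF0}(2). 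For $\F_e$ with $e \geq 1$, $L = F$ and the nef cone is spanned by $E + eF$ and $F$; writing $\nu(\bv) = s(E+eF) + tF$ with $s,t \geq 0$, one computes $\nu(\bv)^2 = s(se+2t)$, so bigness forces $s > 0$, whence $\nu(\bv)\cdot F = s > 0$, which is the hypothesis of Theorem~\ref{thm:CH_globalgenerationFe}(2). In each case part~(2) applies, and the general $V \in \mathcal{P}_L(\bv)$ is globally generated.

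I do not expect any real obstacle: the classification theorems carry all the weight, and the rest is bookkeeping about cones on surfaces. The one point that needs a little care is the use of bigness rather than mere nefness on $\F_e$ and $\F_0$: a nef class on the boundary of the nef cone --- such as $bF$ on $\F_e$, or $aE$ or $bF$ on $\F_0$ --- can have $\nu(\bv)\cdot L = 0$, and such characters are governed by part~(1) of the classification, not part~(2); it is precisely bigness that excludes them and yields condition (i). A minor secondary point is simply matching the notation $\chi(\bv(-L))$ with the $\chi(\bv(-1))$ and $\chi(\bv(-F))$ written in the cited theorems.
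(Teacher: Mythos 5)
Your proposal is correct and is essentially the paper's own argument: the paper gives no written proof, stating only that the corollary ``uniformly combines part (2) of the three preceding results,'' which is exactly your reduction to Theorems \ref{thm:CH_globalgenerationP2}, \ref{thm:CH_globalgenerationFe}, and \ref{thm:CH_globalgenerationF0}. Your cone computations showing that big and nef forces $\mu(\bv)>0$ on $\P^2$, $\nu(\bv)\cdot F>0$ on $\F_e$ ($e\geq 1$), and $\nu(\bv)\cdot E,\ \nu(\bv)\cdot F>0$ on $\F_0$ are precisely the bookkeeping the paper leaves implicit, and they are carried out correctly.
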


\section{Obstructions to ampleness}\label{sec:notample}
The goal of this section is to explain some precise restrictions on the Chern characters of stable ample bundles.  The results of the section are summarized in the next proposition, the proof of which will occupy the rest of the section.
\begin{proposition}\label{prop:obstruction}
Let $X=\P^2$ or $\F_e$ and let $V$ be an ample stable vector bundle on $X$ of rank $r\geq 2$.
\begin{enumerate}
        \item If $X = \PP^2$, then $\nu(V) \cdot H > 1 + \frac{1}{r}$ or $V\cong T_{\P^2}$.
\item If $X = \FF_0$, then $\nu(V)\cdot F > 1$ and $\nu(V)\cdot E > 1$.
        \item If $X = \FF_e$ with $e\geq 1$, then $\nu(V) \cdot F > 1$ and $\nu(V) \cdot E \geq 1$.
\end{enumerate}  
\end{proposition}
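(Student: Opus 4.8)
The plan is to exploit the defining property of ampleness: if $V$ is ample, then $V|_C$ is ample for every irreducible curve $C\subset X$, and on a rational curve $C\cong\P^1$ ampleness is a purely numerical condition (every line bundle summand has positive degree in the splitting $V|_C\cong\bigoplus\OO_{\P^1}(a_i)$). So the strategy is: for each surface, pick a cleverly chosen family of rational curves $C$, restrict $V$, and read off numerical inequalities. The key input making this effective is semistability of $V$ together with the Weak Brill--Noether theorems and the fact that semistable sheaves are $L$-prioritary on $\P^2$ and $\F_e$: restricting to a general member of a linear system of rational curves interacts well with stability, so one can control the splitting type.

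First, for $\P^2$, I would restrict $V$ to a general line $\ell$. Since $V$ is $H$-semistable (hence $L=H$-prioritary), Grauert--M\"ulich-type control gives that the splitting type of $V|_\ell$ is balanced: $V|_\ell\cong\bigoplus\OO_{\P^1}(a_i)$ with all $a_i$ within $1$ of the average $\nu(V)\cdot H$. Ampleness forces each $a_i\geq 1$. If $\nu(V)\cdot H\leq 1+\tfrac1r$, then the $a_i$ (which sum to $c_1(V)\cdot H$) are forced to all equal $1$ except possibly one equal to $2$; a short analysis of the extremal cases, combined with the classification of semistable bundles with such small slope on $\P^2$ (e.g.\ the bound $\Delta(V)\geq 0$ and Riemann--Roch), pins $V$ down to $T_{\P^2}$, which is indeed ample and uniform with splitting type $(2,1)$ on every line. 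This is where the case analysis is most delicate — one must rule out all other candidate bundles with uniform or near-uniform splitting type $\leq(2,1,\dots,1)$, and I expect to invoke the irreducibility of $\cP_L(\bv)$ and the Weak Brill--Noether theorem (Theorem~\ref{thm:weakBNP2}) to compute cohomology and eliminate them.

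For $\F_e$, I would restrict $V$ separately to a general fiber $F\cong\P^1$ and to the section $E\cong\P^1$ (and, for $e=0$, to the other ruling). Restricting to a general fiber: again by prioritariness the splitting type of $V|_F$ is balanced around $\nu(V)\cdot F$, and ampleness forces all summands $\geq 1$, giving $\nu(V)\cdot F>1$ (strict, since if the average were exactly $1$ the balanced splitting would be all $1$'s, but then $V$ would restrict trivially-ish on fibers contradicting ampleness on $F$ combined with... actually the average being $1$ with all $a_i\geq 1$ forces all $a_i=1$, which is allowed on a single $\P^1$; the strictness must instead come from varying the fiber or from a sub-line-bundle argument — this subtlety needs care). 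Restricting to $E$: here $E^2=-e<0$ for $e\geq1$, so $E$ does not move, and one only gets $V|_E$ ample, i.e.\ $\deg(V|_E)>0$ for every quotient line bundle; combined with semistability and a bound on sub-line-bundles of $V|_E$ this yields $\nu(V)\cdot E\geq 1$ (not strict, because $E$ is rigid so there is no family to improve the bound). For $e=0$ both rulings move, so symmetry upgrades both to strict inequalities $\nu(V)\cdot E>1$, $\nu(V)\cdot F>1$.

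The main obstacle I anticipate is twofold: (i) on $\P^2$, carrying out the classification of all semistable bundles with $\nu\cdot H\leq 1+\tfrac1r$ tightly enough to conclude $V\cong T_{\P^2}$ rather than merely ``$V$ has the numerical invariants of $T_{\P^2}$'' — this requires knowing such a bundle is rigid and uniform, likely via $\Ext^1(V,V)=0$ computed through Weak Brill--Noether; and (ii) correctly sourcing the \emph{strict} inequalities on the moving curve classes (fibers on $\F_e$, both rulings on $\F_0$). The honest argument for strictness: if $\nu(V)\cdot F=1$ exactly and $V|_{F_t}$ is trivial-twist $\OO_{\P^1}(1)^{\oplus r}$ for general $t$, push down to $\P^1$; the ampleness of $\OO_{\P V}(1)$ restricted to $\P V|_{F_t}$ for all $t$ combined with a limiting/semicontinuity argument on special fibers (or the Harder--Narasimhan filtration relative to $\pi$) produces a contradiction. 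I would organize the write-up as three lemmas — one per surface — with the $\P^2$ lemma split further into the main inequality and the tangent-bundle exceptional case.
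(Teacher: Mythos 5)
Your treatment of parts (2) and (3) is essentially the paper's argument: the splitting-type observation gives $\nu(V)\cdot E\geq 1$ and $\nu(V)\cdot F\geq 1$, and the strictness on the ruling(s) comes, as you eventually gesture at, from the fact that if $\nu(V)\cdot F=1$ then ampleness forces $V|_f\cong \OO_{\P^1}(1)^{\oplus r}$ on \emph{every} fiber $f$ (not just the general one, which is why no semicontinuity subtlety is needed), so a twist of $V$ is a pullback from $\P^1$; this contradicts stability once $r\geq 2$, and for $\F_0$ the other strict inequality follows by symmetry. That part only needs to be written out carefully (the paper does it by pushing forward along $\pi$, as in its Proposition on $\F_e$).

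The genuine gap is in part (1). Restriction to a \emph{general} line plus ``the classification of semistable bundles with such small slope'' cannot pin $V$ down to $T_{\P^2}$: for every admissible $\Delta\gg 0$ there are stable bundles of rank $r$ with $\mu=1$ or $\mu=1+\frac{1}{r}$, and (after reducing to $V$ general via openness of ampleness, as you propose) their splitting type on the general line is $(1,\dots,1)$ or $(2,1,\dots,1)$, which looks perfectly ample there. So the obstruction must be a \emph{jumping line}, i.e.\ a special line $\ell$ on which $V|_\ell$ acquires a summand of degree $\leq 0$, and nothing in your outline produces one; ``Weak Brill--Noether to compute cohomology and eliminate candidates'' computes $H^1(V(-2))$ on the surface but never converts its nonvanishing into the existence of a bad line. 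The paper supplies exactly this bridge: ampleness gives $H^1(V(-2)|_\ell)=0$ for all $\ell$, hence by Grauert and the resolution of the universal line $\Sigma\subset\P^2\times\P^{2*}$ (with $I_\Sigma\cong\OO_{\P^2}(-1)\boxtimes\OO_{\P^{2*}}(-1)$) a surjection $H^1(V(-3))\otimes\OO_{\P^{2*}}(-1)\to H^1(V(-2))\otimes\OO_{\P^{2*}}$; stability with $1\leq\mu(V)\leq 1+\frac{1}{r}$ shows these are the only possibly nonzero cohomology groups, and a kernel/Euler-characteristic argument shows a surjection $\OO_{\P^{2*}}(-1)^{m+\epsilon}\to\OO_{\P^{2*}}^{m}$ with $\epsilon=\chi(V(-2)|_\ell)\in\{0,1\}$ forces $m=0$. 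Only then is the discriminant pinned down ($\chi(V(-2))=0$, so $\Delta(V)\leq \frac{3}{8}$), and the classification of stable bundles of small discriminant (exceptional bundles) leaves $T_{\P^2}$ for $r=2$ and nothing at all for $r\geq 3$ or $\mu=1$. Without this (or an equivalent jumping-line argument), your case analysis on $\P^2$ does not close.
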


An easy necessary property of ample bundles $V$ on $\PP^2$ and $\FF_e$ is that the restriction of $V$ to any smooth rational curve $D$ must have degree at least equal to its rank. Suppose otherwise: then $V|_D$ splits as a direct sum $\OO_{\PP^1}(m_1) \oplus \cdots \oplus \OO_{\PP^1}(m_r)$ with $r = \rk V > \sum m_i$. Thus for some $i$, we have $m_i \leq 0$. In particular, $V|_D$ has a quotient that is not ample. We record the most important cases of this fact in the following proposition.

\begin{proposition}\label{prop:easy_obstruction}
Let $V$ be an ample vector bundle on the surface $X = \PP^2$ or $X = \FF_e$.
\begin{enumerate}
        \item If $X = \PP^2$, then $\mu(V) \geq 1$.
        \item If $X = \FF_e$, then $\nu(V) \cdot E \geq 1$ and $\nu(E) \cdot F \geq 1$.
\end{enumerate}
\end{proposition}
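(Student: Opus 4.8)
The plan is to leverage exactly the observation recorded in the paragraph immediately preceding the proposition: an ample bundle restricts to an ample bundle on every subvariety, and a quotient of an ample bundle is ample. So for any smooth rational curve $D \subset X$, the restriction $V|_D$ is ample on $D \cong \PP^1$, and by Grothendieck's splitting theorem $V|_D \cong \bigoplus_{i=1}^r \OO_{\PP^1}(m_i)$ with $r = \rk(V)$. If $\sum_i m_i < r$, then some $m_i \leq 0$, so $\OO_{\PP^1}(m_i)$ is a non-ample quotient of $V|_D$, contradicting ampleness. Hence I obtain the key inequality $c_1(V) \cdot D = \deg(V|_D) = \sum_i m_i \geq r$ for every smooth rational curve $D \subset X$. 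The rest of the proof is simply applying this to the distinguished rational curves on each surface.

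For $X = \PP^2$, I would take $D$ to be a line, so that $D$ has class $H$. Since $H^2 = 1$, the inequality $c_1(V) \cdot H \geq r$ immediately gives $\mu(V) = \frac{c_1(V) \cdot H}{rH^2} = \frac{c_1(V)\cdot H}{r} \geq 1$, which is part (1).

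For $X = \F_e$, I would apply the inequality to a fiber of $\pi$ (a smooth rational curve of class $F$) and to the distinguished section (the smooth rational curve of class $E$ of self-intersection $-e$ when $e \geq 1$, or a ruling of the other factor when $e = 0$). This yields $c_1(V) \cdot F \geq r$ and $c_1(V) \cdot E \geq r$, hence $\nu(V) \cdot F = \frac{c_1(V)\cdot F}{r} \geq 1$ and $\nu(V) \cdot E = \frac{c_1(V)\cdot E}{r} \geq 1$, which is part (2). (The statement of part (2) contains a typo: ``$\nu(E)\cdot F$'' should read ``$\nu(V)\cdot F$''.)

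There is no serious obstacle here; the entire content is the remark that an ample bundle is positive enough on each of the distinguished smooth rational curves. The only points needing any care are the identification of the classes $H$, $E$, and $F$ with classes of honest smooth rational curves, and the elementary fact that a split bundle on $\PP^1$ whose total degree is less than its rank must have a summand of nonpositive degree, which is not ample. Both are immediate, so the write-up will be a few lines.
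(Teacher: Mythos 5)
Your argument is exactly the one the paper uses: the paragraph preceding the proposition records that an ample bundle must restrict to each smooth rational curve with degree at least its rank, and the proposition is obtained by applying this to a line on $\PP^2$ and to the curves of class $E$ and $F$ on $\FF_e$, just as you do (including the correct reading of the typo $\nu(E)\cdot F$ as $\nu(V)\cdot F$). Your write-up is correct and needs no changes.
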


The above can be sharpened for both $\PP^2$ and $\FF_e$ if $V$ is assumed to be stable of rank at least $2$.  The next result generalizes \cite[Proposition 3.1]{LePotierAmple} to arbitrary rank.

\begin{proposition}\label{prop:converse_notample_p2}
Let $V$ be an ample stable bundle on $\PP^2$ of rank $r \geq 2$, and suppose  $\mu(V) \leq 1 + \frac{1}{r}$. Then $V \cong T_{\PP^2}$.
\end{proposition}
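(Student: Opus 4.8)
The plan is to constrain $c_1(V)$ using Proposition~\ref{prop:easy_obstruction}, deduce that $V$ is a uniform bundle, and then appeal to the classification of uniform bundles of small splitting type on $\P^2$. Since $\Pic(\P^2)=\Z H$, write $c_1(V)=dH$, so $\mu(V)=d/r$; Proposition~\ref{prop:easy_obstruction}(1) gives $\mu(V)\ge 1$ and the hypothesis gives $\mu(V)\le 1+\tfrac1r$, hence $d\in\{r,r+1\}$. Because $V$ is ample, for every line $\ell$ the splitting $V|_\ell\cong\bigoplus_{i=1}^r\OO_\ell(a_i)$ has all $a_i\ge 1$ (a summand with $a_i\le 0$ would be a non-ample quotient of the ample bundle $V|_\ell$), and as $\sum_i a_i=d\le r+1$ the splitting type is $(1,\dots,1)$ when $d=r$ and $(2,1,\dots,1)$ when $d=r+1$; in either case $V$ is uniform. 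If $d=r$, then $V(-H)$ is trivial on every line; since a vector bundle on $\P^2$ that is trivial on every line is trivial, $V(-H)\cong\OO_{\P^2}^{\oplus r}$ and $V\cong\OO_{\P^2}(1)^{\oplus r}$, which is strictly semistable, contradicting stability. So $d=r+1$.

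It remains to treat the case where $V$ is uniform of splitting type $(2,1,\dots,1)$. Here I would invoke the classification of such bundles: for $r=2$ this is Van de Ven's theorem, which is precisely the ingredient used in \cite[Prop.~3.1]{LePotierAmple}, and in arbitrary rank the bundles of this splitting type are again the ``standard'' ones, namely $\OO_{\P^2}(1)^{\oplus(r-1)}\oplus\OO_{\P^2}(2)$ and $\OO_{\P^2}(1)^{\oplus(r-2)}\oplus T_{\P^2}$ (using $T_{\P^2}|_\ell\cong\OO_\ell(1)\oplus\OO_\ell(2)$). The first of these is not semistable. For $r\ge 3$ the second has $\OO_{\P^2}(1)$ as a direct summand, hence as a quotient bundle, and since $\mu(\OO_{\P^2}(1))=1<1+\tfrac1r=\mu(V)$ the reduced Hilbert polynomial of this quotient is strictly smaller than that of $V$ for $m\gg 0$, contradicting stability of $V$. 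Therefore $r=2$ and $V\cong T_{\P^2}$; conversely $T_{\P^2}$ is ample and stable, so the bound is sharp.

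The main obstacle is the last step: the classification of uniform bundles of splitting type $(2,1,\dots,1)$ in arbitrary rank, equivalently the statement that such a bundle has $c_2\le\binom{r+1}{2}$. One can approach it by hand on the incidence variety $Z=\{(x,\ell):x\in\ell\}\subset\P^2\times(\P^2)^*$ with its two projections $p$ and $q$: since $\Hom(\OO_\ell(2),V|_\ell)$ is one-dimensional for every $\ell$, cohomology and base change yield a canonical sub-line-bundle $N\subset p^*V$ restricting to $\OO_\ell(2)$ on each fiber of $q$ with locally free quotient, and descent along $q$ produces a short exact sequence $0\to q^*\Lambda\otimes h^2\to p^*V\to q^*Q''\otimes h\to 0$ on $Z$, where $h=p^*\OO_{\P^2}(1)$ and $\Lambda$ is a line bundle on $(\P^2)^*$. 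Restricting to a fiber of $p$ shows $\deg\Lambda\le 0$, and $\deg\Lambda=0$ would descend $N$ to an inclusion $\OO_{\P^2}(2)\hookrightarrow V$, impossible since $2>\mu(V)$. Pushing forward and feeding in Weak Brill--Noether (Theorem~\ref{thm:weakBNP2}) applied to $V(-H)$ — we may take $V$ general in the irreducible space $M(\bv)$ since ampleness is open — together with Corollary~\ref{cor:ggcriterion}, one is led to $\deg\Lambda=-1$, then to $V(-H)$ being globally generated with $h^0(V(-H))=r+1$, and hence via the Euler sequence to $V(-H)\cong T_{\P^2}(-H)\oplus\OO_{\P^2}^{\oplus(r-2)}$, after which stability forces $r=2$.
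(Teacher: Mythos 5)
Your reduction to the uniform case is correct: ampleness on every line plus Proposition \ref{prop:easy_obstruction} does force the splitting type to be $(1,\dots,1)$ or $(2,1,\dots,1)$, and the case $c_1=rH$ is correctly excluded since a bundle trivial on every line is trivial, so $V\cong\OO_{\P^2}(1)^{\oplus r}$ would be strictly semistable. The genuine gap is the step you yourself flag: the ``classification of uniform bundles of splitting type $(2,1,\dots,1)$ in arbitrary rank'' is not an available result you can invoke. The literature classifies uniform bundles on $\P^2$ only in ranks $2$ and $3$ (Van de Ven, Elencwajg), and for rank $\geq 4$ uniformity does \emph{not} imply being a direct sum of standard bundles (Elencwajg and Dr\'ezet construct non-homogeneous uniform rank-$4$ bundles on $\P^2$, of splitting type $(1,1,0,0)$), so the special type $(2,1,\dots,1)$ genuinely requires proof. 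Your sketch via the incidence variety sets up the right exact sequence $0\to q^*\Lambda\otimes h^2\to p^*V\to q^*Q''\otimes h\to 0$ and correctly rules out $\deg\Lambda=0$ by stability, but the decisive claim --- that $\deg\Lambda\leq -2$ is impossible, equivalently that $c_2(V)$ is small --- is exactly the heart of the matter, and ``pushing forward and feeding in Weak Brill--Noether \dots one is led to $\deg\Lambda=-1$'' is not an argument: $Q''$ need not be uniform or semistable, so no induction or slope bound is immediate, and Theorem \ref{thm:weakBNP2} concerns a \emph{general} member of the moduli space, so you would also have to justify replacing your given ample stable $V$ by a general deformation and then transferring the conclusion $V\cong T_{\P^2}$ back (possible via openness of ampleness and rigidity of $T_{\P^2}$, but unaddressed).

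For comparison, the paper closes precisely this gap by using ampleness a second time rather than uniformity: since $H^1(V(-2)|_\ell)=0$ for every line, Grauert plus the Koszul resolution of the universal line produce a \emph{surjection} $A:\OO_{\P^{2*}}(-1)^{m+\epsilon}\to\OO_{\P^{2*}}^{m}$ on the dual plane, and an Euler-characteristic argument forces $m=0$, i.e.\ $\chi(V(-2))=0$. This pins down $\Delta(V)$ numerically, and the Dr\'ezet--Le Potier description of characters of stable bundles with small discriminant \cite{DLP,LePotier} then leaves only $r=2$, $\Delta=\tfrac38$, i.e.\ $V\cong T_{\P^2}$. If you want to salvage your route, you must either prove the splitting-type $(2,1,\dots,1)$ classification in all ranks (carrying out the descent argument completely, including excluding $\deg\Lambda\leq-2$), or, more efficiently, convert your ``no jumping lines'' information into the surjectivity of a map of sheaves on $\P^{2*}$ as in \cite{LePotierAmple} and the paper, where the numerics finish the job.
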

\begin{proof}

Let $\PP^{2*}$ denote the dual projective plane, and let $$\Sigma = \{(p,\ell):p\in \ell\} \subset \P^2 \times \PP^{2*}$$ be the universal line. Let $\pi_1:\P^2\times \PP^{2*}\to \P^2$ and $\pi_2:\P^2\times \PP^{2*}\to \PP^{2*}$ be the projections.  Since $V$ is ample, we know $V|_\ell$ is ample for every line $\ell$, and therefore $H^1(V(-2)|_\ell) = 0$.  By Grauert's theorem $$R^1\pi_{2*}(\pi_1^*(V(-2))\te \OO_\Sigma) = 0.$$ 
The structure sheaf $\OO_\Sigma$ fits into the restriction sequence $$0\to I_\Sigma \to \OO_{\P^2\times \PP^{2*}} \to \OO_\Sigma\to 0.$$ The universal line $\Sigma$ is defined by a bihomogeneous form of type $(1,1)$, so its ideal sheaf is $$I_\Sigma \cong \OO_{\P^2}(-1) \boxtimes \OO_{\P^{2*}}(-1).$$  Tensor the resolution of $\OO_\Sigma$ by $\pi_1^*(V(-2))$ and apply $\pi_{2\ast}$ to find that we must have a surjection $$R^1\pi_{2*}(\pi_1^*(V(-3)))\te \OO_{\PP^{2*}}(-1) \to R^1 \pi_{2*}(\pi_1^*(V(-2)))\te \OO_{\PP^{2*}} \to 0,$$ which can be computed to be a map $$A: H^1(V(-3))\te \OO_{\PP^{2*}}(-1)\to H^1(V(-2))\te \OO_{\PP^{2*}} \to 0.$$ Since the slope of $V$ satisfies $1\leq \mu(V) \leq 1+\frac{1}{r}$, stability implies that the only possible nonzero cohomology group of $V(-3)$ and $V(-2)$ is $H^1$.  From the exact sequence $$0\to V(-3)\to V(-2)\to V(-2)|_{\ell}\to 0$$ we get $\chi(V(-2)) = \chi(V(-3))+\chi(V(-2)|_\ell)$.  Let $\epsilon = \chi(V(-2)|_\ell)$  and  $m = -\chi(V(-2))$. Then the map $A$ is a surjective map $$A: \OO_{\PP^{2*}}(-1)^{m+\epsilon} \to \OO_{\PP^{2*}}^{m}.$$

\emph{Case 1: $\mu(V) = 1 + \frac{1}{r}$.} In this case, $\epsilon = 1$ and $A$ becomes a surjective map $$A: \OO_{\PP^{2*}}(-1)^{m+1} \to \OO_{\PP^{2*}}^m.$$ Then the kernel must be the line bundle $\OO_{\P^{2*}}(-m-1)$, and computing Euler characteristics gives a contradiction unless $m = 0$.  But if $m=0$, then $\chi(V(-2)) = 0$ so $$\Delta(V) = P\left(1+\frac{1}{r}-2\right).$$ For $r = 2$ this gives $\Delta(V) = \frac{3}{8}$ and $V = T_{\P^2}$. If $r \geq 3$, then it gives $0<\Delta(V) < \frac{3}{8}$.  There is no stable bundle with this discriminant since stable bundles with discriminant less than $1/2$ are exceptional and have discriminant $\frac{1}{2}(1-\frac{1}{r^2})$; see \cite{LePotier}.

\emph{Case 2: $\mu(V) = 1$.}  In this case, $\epsilon = 0$ and $A$ is a surjective map $$A : \OO_{\P^{2*}}(-1)^{m} \to \OO_{\P^{2*}}^m.$$ Then $A$ must be an isomorphism, which is only possible if $m = 0$.  As in the previous case, this implies $\Delta(V) = 0$.  Since $V$ has rank $2$ and is stable, this is impossible. \end{proof}

A similar result holds on Hirzebruch surfaces as well. Of course, the tangent bundle $T_{\FF_e}$ is not ample \cite{Mori}. 

\begin{proposition}\label{prop:converse_notample_fe}
Let $V$ be a stable ample bundle on $\FF_e$, $e \geq 0$. If $\nu(V) \cdot F = 1$, then $V$ is a line bundle.
\end{proposition}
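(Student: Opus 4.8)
The plan is to restrict $V$ to the fibers of the ruling $\pi\colon\FF_e\to\PP^1$ and to read off enough from the hypothesis $\nu(V)\cdot F=1$ to pin $V$ down completely. Write $r=\rk(V)$. Since restrictions of ample bundles to subvarieties are ample, $V|_{F_t}$ is an ample vector bundle on the fiber $F_t\cong\PP^1$ for every $t\in\PP^1$; by Grothendieck's theorem it splits as $\bigoplus_{i=1}^r\OO_{\PP^1}(a_i)$, and since each summand is also a quotient, ampleness forces $a_i\geq 1$ for all $i$. On the other hand $\deg(V|_{F_t})=c_1(V)\cdot F=r\,\nu(V)\cdot F=r$, so $\sum_i a_i=r$ and therefore $a_i=1$ for every $i$. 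Thus $V|_{F_t}\cong\OO_{\PP^1}(1)^{\oplus r}$ for \emph{every} fiber $F_t$.

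Next I would twist down by $E$. Since $E\cdot F=1$, the line bundle $\OO_{\FF_e}(-E)$ has degree $-1$ on each fiber, so $V(-E):=V\otimes\OO_{\FF_e}(-E)$ restricts to the trivial bundle $\OO_{\PP^1}^{\oplus r}$ on every $F_t$. A bundle on the $\PP^1$-bundle $\FF_e$ that is trivial on every fiber is pulled back from the base: $W:=\pi_*(V(-E))$ is locally free of rank $r$ with formation commuting with base change (cohomology and base change, as $h^0(V(-E)|_{F_t})=r$ is constant and $h^1=0$), and the evaluation map $\pi^*W\to V(-E)$ is an isomorphism on each fiber, hence an isomorphism. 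Since every vector bundle on $\PP^1$ splits, $W\cong\bigoplus_{i=1}^r\OO_{\PP^1}(d_i)$, and consequently
\[
V\;\cong\;\pi^*W\otimes\OO_{\FF_e}(E)\;\cong\;\bigoplus_{i=1}^r\OO_{\FF_e}(E+d_iF).
\]
If $r\geq 2$, this displays $V$ as a nontrivial direct sum, so $\dim\Hom(V,V)\geq r\geq 2$ and $V$ is not simple --- contradicting the fact that a stable sheaf is simple. Hence $r=1$ and $V$ is a line bundle.

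The only step that needs care is the implication ``$V(-E)$ trivial on every fiber $\Rightarrow$ $V(-E)\cong\pi^*W$''. This is the standard fiberwise-triviality argument: $\pi_*$ is locally free and compatible with base change by cohomology and base change, the evaluation map $\pi^*\pi_*(V(-E))\to V(-E)$ is surjective by Nakayama since it is surjective on every fiber, and a surjection of locally free sheaves of equal rank on an integral scheme is an isomorphism; I expect this to be routine rather than a genuine obstacle. The argument runs parallel to the proof of Proposition~\ref{prop:converse_notample_p2} --- restrict to the distinguished family of rational curves and pin the splitting type down numerically --- but here the presence of the ruling $\pi$ makes the conclusion immediate, and in particular it avoids the appeal to a classification of stable sheaves of small discriminant that the analogous pushforward argument (via the universal fiber $\Sigma\cong\FF_e\subset\FF_e\times\PP^1$, as in the $\PP^2$ case) would eventually need.
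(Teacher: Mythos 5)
Your proof is correct and follows essentially the same route as the paper's: restrict to the fibers of $\pi$ to see that ampleness and $\nu(V)\cdot F=1$ force $V|_{F_t}\cong\OO_{\PP^1}(1)^{\oplus r}$, identify $V$ (after twisting by $-E$) as a pullback from $\PP^1$, and conclude from stability (simplicity) that $r=1$. The only difference is cosmetic: you spell out the cohomology-and-base-change step, whereas the paper delegates it to the argument of \cite[Prop.~5.4]{CoskunHuizengaBN}.
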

\begin{proof}
Suppose $V \in M(\bv)$ is general and that $V$ is ample.  If $\ell \in |F|$ is any fiber, then $V|_\ell$ is ample, hence a direct sum of line bundles $V|_\ell \cong \bigoplus \OO_\ell(1)$. We can now argue as in \cite[Prop. 5.4]{CoskunHuizengaBN}: we have $\pi^\ast(\pi_\ast(V|_E)) \cong V$, and $V|_E$ is a balanced direct sum of line bundles by \cite[Prop. 3.6]{CoskunHuizengaBN}. Since $V$ is stable, it must be a line bundle, and then every $V\in M(\bv)$ is a line bundle.
\end{proof}

\section{Asymptotic ampleness}\label{sec:asymptotics}
In this section we perform our study of ampleness for Chern characters $\bv$ with fixed slope and discriminant but sufficiently large rank.  Under this assumption, the results of Section \ref{sec:notample} provide the only obstructions to ampleness. Recall that $H$ denotes the minimal ample polarization.

\begin{theorem}\label{thm:asymptoticample}
Let $X = \P^2$ or $\F_e$ and let $\bv \in K(X)$ be the Chern character of a stable vector bundle. Suppose either
\begin{enumerate}
\item $X = \P^2$ and $\nu(\bv)\cdot H > 1$, or

\item $X = \F_0$ and $\nu(\bv)\cdot F > 1$ and $\nu(\bv) \cdot E > 1$, or  

\item $X = \F_e$ with $e\geq 1$ and $\nu(\bv) \cdot F > 1$ and $\nu(\bv) \cdot E \geq 1$. 
\end{enumerate}
If $n$ is a sufficiently large integer and $V\in M_{H}(n\bv)$ is general, then $V$ is ample.
\end{theorem}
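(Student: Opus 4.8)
The plan is to realize the general bundle of character $n\bv$ as a deformation of an ample bundle obtained as a quotient of a direct sum of line bundles. First I would reduce to a normalized situation: the hypotheses say precisely that $\nu(\bv)-H$ is big and nef, so after replacing $\bv$ by $\bv\otimes\mathcal{O}(D)$ for a suitable $D$ I would arrange that $\nu(\bv)-L$ and $\nu(\bv)-H$ are both nef (recall $L=H$ on $\mathbb{P}^2$, while $L=F$ on $\mathbb{F}_e$) — twisting changes nothing about ampleness, since $V$ is ample iff $V\otimes\mathcal{O}(D)$ is ample for $D$ nef and ampleness is preserved under twisting by line bundles of the right positivity. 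The key point is that for $n\gg 0$ the discriminant $\Delta(n\bv)=\Delta(\bv)$ is fixed while $\rk(n\bv)=n\rk(\bv)\to\infty$ and $\chi(n\bv(-L))=n\chi(\bv(-L))+O(1)\to+\infty$ grows linearly, so eventually $\chi(n\bv(-L))\geq 0$. By Corollary~\ref{cor:ggcriterion}, since $\nu(n\bv)=\nu(\bv)$ is big and nef and $\Delta(n\bv)\geq 0$ and $\rk(n\bv)\geq 2$, the general $W\in\mathcal{P}_L(n\bv)$ is globally generated; and since semistable sheaves are $L$-prioritary and $\mathcal{M}(n\bv)$ is an open substack of $\mathcal{P}_L(n\bv)$ (both irreducible by Theorem~\ref{thm:prioritaryProps}), it suffices to produce any single globally generated, ample, $L$-prioritary bundle of character $n\bv$: its ample locus is open and nonempty, hence contains the general semistable sheaf, which is what we want.

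Next I would construct such a bundle. Take the general globally generated $W\in\mathcal{P}_L(n\bv)$. By Weak Brill--Noether (Theorem~\ref{thm:weakBNP2} or~\ref{thm:weakBNFe}, using $\nu(\bv)$ nef so $h^1=h^2=0$) the only cohomology of $W$ is $H^0$, and global generation gives a surjection $\mathcal{O}_X^{\,h^0(W)}\onto W$. This is not yet ample — $\mathcal{O}_X$ is only nef. To fix this, twist: consider $W'=W\otimes\mathcal{O}_X(H)$, which fits in $\mathcal{O}_X(H)^{\,h^0(W)}\onto W'$ and is a quotient of an ample bundle, hence ample by Proposition part~(1). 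But $W'$ has character $n\bv\otimes\mathcal{O}(H)$, not $n\bv$. So instead the right move is: after the initial normalizing twist, arrange things so that the bundle we actually want, call it $U$ of character $n\bv$, can be written as $U=W\otimes\mathcal{O}(H)$ where $W$ of character $n\bv\otimes\mathcal{O}(-H)$ has $\nu(W)=\nu(\bv)-H$ big and nef and is globally generated for $n\gg0$ (again by Corollary~\ref{cor:ggcriterion}, since $\chi\bigl((n\bv\otimes\mathcal{O}(-H))(-L)\bigr)$ grows linearly in $n$). Then $U$ is a quotient of $\mathcal{O}(H)^{\oplus N}$, hence ample, $U$ is $L$-prioritary since it is a twist of a prioritary sheaf (prioritariness is twist-invariant), and $U$ lies in the irreducible stack $\mathcal{P}_L(n\bv)$; by openness of ampleness the general member of $\mathcal{P}_L(n\bv)$ — hence the general member of $M_H(n\bv)$ — is ample.

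I expect the main obstacle to be bookkeeping the twists so that all three positivity requirements hold simultaneously: we need $\nu(\bv)-H$ big and nef (given), $\nu(n\bv\otimes\mathcal{O}(-H))-L$ nef so that Corollary~\ref{cor:ggcriterion} applies to $W=U\otimes\mathcal{O}(-H)$, and we need $\Delta\geq 0$, which is automatic since $\bv$ is the character of a stable bundle so $\Delta(\bv)\geq 0$ by Bogomolov. On $\mathbb{P}^2$, $H=L$, so $\nu(\bv)-H$ big and nef already gives $\nu(W)=\nu(\bv)-H$ nef and $\Delta(W)=\Delta(\bv)\geq 0$, and $\chi(W(-L))=\chi\bigl(n\bv\otimes\mathcal{O}(-2H)\bigr)$ is a polynomial in $n$ with leading coefficient $\tfrac12 n\rk(\bv)(\nu(\bv)-2H)^2>0$ once $\nu(\bv)-2H$ is shown nontrivial — so this needs a small argument; actually the cleaner route on $\mathbb{P}^2$ is to note $\chi(W(-L))\to+\infty$ because $\rk W\to\infty$ while the other logarithmic invariants are fixed and the Riemann--Roch leading term $\rk(W)P(\nu(W))$ dominates as long as $P(\nu(W)-L)>0$, i.e. $\nu(\bv)-H-L$ avoids the negative region of $P$ — here one uses that $\nu(\bv)-H$ is big, hence $\nu(\bv)-H-\epsilon L$ is still effective-ish, to push $P$ positive. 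On $\mathbb{F}_e$ the same computation works separately on the $E$- and $F$-coordinates using the explicit formula $P(xE+yF)=(x+1)(y+1-\tfrac12 ex)$, and the two-variable hypotheses $\nu(\bv)\cdot F>1$, $\nu(\bv)\cdot E\geq 1$ (or $>1$ when $e=0$) are exactly calibrated to make $\nu(\bv)-H$ big and nef. The effective bound on $n$ promised in Remark~\ref{rem-effective} then falls out of making the polynomial inequality $\chi(W(-L))\geq 0$ explicit.
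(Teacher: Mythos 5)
Your overall framework is the same as the paper's (produce one ample $L$-prioritary bundle of character $n\bv$ as a quotient of an ample direct sum of line bundles, then use irreducibility of $\cP_L(n\bv)$, openness of ampleness, and $\cM(n\bv)\subset\cP_L(n\bv)$), but the construction step has a genuine gap: the growth claim for the Euler characteristic is false. Since $\chi$ is additive, $\chi(n\bv(-L))=n\chi(\bv(-L))$ and $\chi\bigl((n\bv\otimes\OO(-H))(-L)\bigr)=n\chi(\bv(-H-L))$ exactly; scaling $\bv$ by $n$ multiplies rank and $\chi$ by $n$ but leaves $\nu$ and $\Delta$ unchanged, so by Riemann--Roch the sign is the sign of $P(\nu(\bv)-H-L)-\Delta(\bv)$, which is \emph{not} implied to be positive by the hypotheses and is negative whenever $\Delta(\bv)$ is large. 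Concretely, for the character from the paper's introduction, $(r,\nu,\Delta)=(2,\tfrac32 H,\tfrac78)$ on $\P^2$, one has $\chi(\bv(-2H))=2\bigl(P(-\tfrac12)-\tfrac78\bigr)=2\bigl(\tfrac38-\tfrac78\bigr)<0$, so $\chi\bigl(W(-L)\bigr)<0$ for every $n$, Corollary \ref{cor:ggcriterion} never applies to your $W=U\otimes\OO(-H)$, and in fact for $\Delta(\bv)>P(\nu(\bv)-H)$ the general $W$ has $\chi(W)<0$, hence no sections at all by Weak Brill--Noether, so it cannot be globally generated and the desired bundle cannot be written as a quotient of $\OO(H)^{\oplus N}$ in this way. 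Your ``leading term dominates'' heuristic fails because $\Delta$ is multiplied by the same rank factor as $P(\nu)$; no choice of $n$ repairs a negative $\chi(\bv(-H-L))$, and this large-discriminant regime is exactly the interesting case of the theorem.

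The paper gets around this by dualizing: it constructs $V$ of character $n\bv$ as the \emph{cokernel} of a general map $U\to\OO_X(H)^{nr+s}$ where the kernel $U$ has fixed small rank $s\geq 2$ and character $\bu=(nr+s)\ch(\OO_X(H))-n\bv$. Two things make this work where your direct approach fails. First, $c_1(\bu)=-nrB+sH$ with $B=\nu(\bv)-H$ big and nef, so $c_1(\bu)^2$ grows quadratically in $n$ while $\ch_2(\bu)$ is only linear; hence $\Delta(\bu)\geq 0$ for $n\gg 0$ and prioritary bundles of character $\bu$ exist, even though $\Delta(\bv)$ itself may be huge. Second, the global generation needed is that of $U^{*}(H)$, and the relevant Euler characteristic is $\chi(U^{*}(H-L))=-n\chi(\bv^{*}(H-L))$, so the discriminant of $\bv$ now enters with the \emph{favorable} sign: after normalizing $1<\nu(\bv)\cdot L\leq 2$, stability (on $\P^2$) or $P(\nu(\bv^{*}(H-L)))\leq 0$ plus Bogomolov (on $\F_e$) give $\chi(\bv^{*}(H-L))\leq 0$ regardless of how large $\Delta(\bv)$ is. One then checks $L$-prioritariness of the cokernel via $H^1(U^{*}(H-L))=0$ (Weak Brill--Noether again). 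If you want to salvage your argument, you must replace the claim ``$\chi(W(-L))\to+\infty$'' by some mechanism of this kind; as written, the proposal only proves the theorem under the extra hypothesis $\Delta(\bv)\leq P(\nu(\bv)-H-L)$.
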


Also see Remark \ref{rem-effective} for an explicit effective bound on $n$.

\begin{remark} The required inequalities in Theorem \ref{thm:asymptoticample} can be more compactly rephrased as the condition that $\nu(\bv)-H$ is big and nef.
\end{remark}

\begin{proof}[Proof of Theorem \ref{thm:asymptoticample}]
Recall that we write $L  =H$ if $X = \P^2$ and $L = F$ if $X =\F_e$.
The idea of the proof is to show that the (irreducible) stack $\cP_L(n\bv)$ of $L$-prioritary sheaves contains a quotient $V$ of an obviously ample bundle.  Then $V$ is ample, so the general prioritary sheaf is ample, and since $M(n\bv)\subset \cP_L(n\bv)$ is an open dense substack, it follows that the general $V\in M(n\bv)$ is ample.  The proof proceeds in several steps.
 
\emph{Step 1: Normalization of $\nu(\bv)$.}  Without loss of generality, we may  assume that $1 < \nu(\bv)\cdot L \leq 2$.  Indeed, we can find a nef line bundle $N$ such that $\bv(-N)$ satisfies these inequalities (this is clear if $X=\P^2$, and if $X = \F_e$ we can use a bundle of the form $N = \OO(mE+meF)$, noting $N\cdot E = 0$ and $N\cdot F = m$).  Then if a bundle $W\in M(n\bv(-N))$  is ample, its twist $W(N) \in M(n\bv)$ is ample  as well.  In what follows, we assume $\nu(\bv)$ satisfies this inequality.  

\emph{Step 2: The character $\bu$.}  Fix an integer $s\geq 2$, and let $r = \rk(\bv)$.  We wish to show that if $n$ is sufficiently large, then there is some $V\in \cP_L(n\bv)$ that is a quotient of $\OO_X(H)^{nr+s}$.  Then there will be an exact sequence $$0\to U \fto\phi \OO_X(H)^{nr+s} \to V\to 0.$$  To construct $V$, we reverse this process by starting from a bundle $U$ and taking the cokernel of a general map $\phi:U\to \OO_X(H)^{nr+s}$. Let $\bu = (nr+s) \ch(\OO_X(H)) -n \bv$ be the Chern character of the hypothetical kernel $U$.  Note that $\bu$ depends on $s$ and $n$; the choice of $s$ is unimportant as long as $s \geq 2$, but allowing $n$ to become large is crucial for the remainder of the proof.

\emph{Step 3: $L$-prioritary vector bundles of character $\bu$ exist if $n\gg 0$.}    Clearly $\bu$ has rank $s\geq 2$.  Recall from Theorem \ref{thm:prioritaryProps} that $L$-prioritary vector bundles of character $\bu$ will exist if $\Delta(\bu) \geq 0$, so we just have to pick $n$ so that this is true.  Recall $$2s^2 \Delta(\bu) = c_1(\bu)^2 - 2s \ch_2(\bu),$$ and write $$B = \nu(\bv) - H,$$ recalling that $B$ is big and nef.  We compute $$c_1(\bu) = -nrB+sH.$$ Since $B^2 > 0$, we see that  $c_1(\bu)^2$ is a quadratic function of $n$ with a positive leading coefficient.  On the other hand, $\ch_2(\bu)$ is a linear function of $n$.  Therefore $\Delta(\bu) \geq 0$ for $n\gg 0$.

\emph{Step 4: Global generation of $\sHom(U,\OO_X(H))$.} Let $n\gg 0$ be large enough that there are stable bundles of character $\bu$, and let $U\in M(\bu)$ be general.  We show that $\sHom(U,\OO_X(H)) = U^*(H)$ is globally generated.  By the results of Section \ref{sec:gg-prelim}, this can be determined from the character $\bu$.  In fact, the full classification of globally generated characters is more than we need and we can use the criterion of Corollary \ref{cor:ggcriterion}.

Observe that $$\nu(U^*(H)) = \frac{nr}{s}(\nu(\bv)-H),$$ so $\nu(U^*(H))$ is nef.  Furthermore, we compute $\nu(U^*(H))\cdot L > 0.$  Thus $U^*(H)$ is big. It remains to show $$\chi(U^*(H-L)) \geq 0.$$  Since $\chi(\OO_X(-H-L))=0$, we get $$\chi(U^*(H-L)) = -n\chi(\bv^*(H-L)),$$ and we need to show $\chi(\bv^*(H-L))\leq 0$, whence it will follow that $U^\ast(H)$ is globally generated. This is where the choice of normalization of $\nu(\bv)$ in Step 1 becomes important.  

If $X = \P^2$, then $H=L$ and $\mu(\bv^*(H-L)) = \mu(\bv^*)$ is between $-2$ and $-1$.  A stable bundle with slope between $-2$ and $-1$ has neither sections nor $H^2$, so we find $\chi(\bv^*) \leq 0$.

Suppose instead that $X = \F_e$, and let $P$ be the Hilbert polynomial of $\OO_X$.  We claim that $P(\nu(\bv^*(H-L)))\leq 0$, and therefore that $\chi(\bv^*(H-L))) \leq 0$ by Riemann-Roch and the Bogomolov inequality.  Write $\nu(\bv^*(H-L)) = xE+yF$.  Since $1 < \nu(\bv) \cdot L \leq 2$ and $\nu(\bv) \cdot E \geq 1$, we have $-1\leq x < 0$ and $y-ex \leq -1$.    Recall that $$P(xE + yF) = (x+1)\left(y+1-\frac{1}{2}ex\right).$$ Then the first factor $x+1$ is nonnegative and the second factor $y+1 - \frac{1}{2}ex \leq y+1-ex \leq 0$ is nonpositive.  Therefore $P(xE+yF) \leq 0$.
 
\emph{Step 5: Construction of $V$}.  Let $\phi: U\to \OO_X(H)^{nr+s}$ be a general map.  Since $\sHom(U,\OO_X(H))$ is globally generated, a Bertini-type theorem \cite[Proposition 2.6]{HuizengaJAG} shows the cokernel $V$ is a vector bundle of character $n\bv$.

\emph{Step 6: $V$ is $L$-prioritary.}  To complete the proof, we need to show that $V$ is $L$-prioritary.  Since $V$ is a vector bundle, this amounts to showing that $\Ext^2(V,V(-L)) = 0$.  Twisting the defining sequence of $V$ by $\OO_X(-L)$ and applying $\Hom(V,-)$ gives a surjection  $$\Ext^2(V,\OO_X(H-L))^{nr+s}\to \Ext^2(V,V(-L))\to 0,$$ so it is enough to show that $\Ext^2(V,\OO_X(H-L)) = 0$.  Applying $\Hom(-,\OO_X(H-L))$ to the defining sequence of $V$ yields the exact sequence $$\Ext^1(U,\OO_X(H-L))\to \Ext^2(V,\OO_X(H-L)) \to \Ext^2(\OO_X(H)^{nr+s},\OO_X(H-L)),$$ and since $$\Ext^2(\OO_X(H),\OO_X(H-L)) = H^2(\OO_X(-L)) = 0,$$ it is enough to show that $$H^1(U^*(H-L)) = \Ext^1(U,\OO_X(H-L))  = 0.$$  We have already seen $\chi(U^*(H-L)) \geq 0$, so it suffices to show that $U^*(H-L)$ satisfies the weak Brill-Noether theorem.  Since $\nu(U^*(H))$ was shown to be nef, it is easy to see that the hypotheses of Theorems \ref{thm:weakBNP2} and \ref{thm:weakBNFe} are satisfied.
\end{proof}

\begin{remark}\label{rem-effective}
The bound on $n$ in Theorem \ref{thm:asymptoticample} can easily be made effective.  Taking $s=2$ and explicitly solving $\Delta(\bu) \geq 0$ for $n$ in Step 3, we see that it is enough to have
\[
n\geq \frac{4\Delta(\bv)}{rB^2}-\frac{2}{r}
\]
where $B = \nu(\bv) - H$ and $\bv$ is normalized as in Step 1.
\end{remark}

\begin{example}\label{example:Gieseker}
  We consider again the example of Gieseker, where a bundle $V_d$ is defined as a general cokernel \[
  0 \to \OO_{\PP^2}(-d)^2 \to \OO_{\PP^2}(-1)^4 \to V_{d} \to 0.
\]
The character $\bv_{d} = \ch(V_d)$ is given by $(\rk(\bv_{d}),c_1(\bv_{d}),\ch_2(\bv_{d})) = (2,2d-4,2-d^2)$. If $d \geq 4$, then we can analyze the characters $n\bv_d$ as in the theorem. In fact, we can often write ample bundles of character $n\bv_d$ as quotients of $\OO_{\PP^2}(1)^{2n+2}$. To make this effective, write an exact sequence
\[
  0 \to U \to \OO_{\PP^2}(1)^{2n+2} \to V \to 0.
\]
Assuming $\ch(V) = n\bv_d$, if we can show that $\chi(V^*) \leq 0$ and $\Delta(U) \geq 0$, then the argument in the theorem says that $V$ is locally free and prioritary. Computing Euler characteristics, we have
\[
\chi(V^*) = n(10-3d-d^2) < 0.
\]
Again arguing as in Step 3 of the theorem, we need $n$ large enough that $\Delta(U) \geq 0$. Calculating as in Remark \ref{rem-effective}, it suffices to have
\[
n > \frac{2(d-1)^2}{(d-3)^2} - 1.
\]
In particular, for $d \geq 12$, we may take $n=2$.

\end{example}

\section{Ample globally generated bundles}\label{sec:ample_globallygenerated}
Let $X = \P^2$ or $\F_e$  and let $\bv\in K(X)$ be a Chern character with $r(\bv)\geq 2$ such that the general sheaf $V\in M(\bv)$ is a globally generated vector bundle.   In this section we show that if the necessary inequalities of Proposition \ref{prop:obstruction} are satisfied, then $V$ is ample.  Note that $T_{\P^2}$ is both globally generated and ample, so we omit this case from the discussion.

\begin{theorem}\label{thm:main-gg}
Let $X = \P^2$ or $\F_e$ and let $\bv \in K(X)$ be a Chern character such that the general sheaf $V\in M(\bv)$ is a globally generated vector bundle.  Suppose either
\begin{enumerate}
\item $X = \P^2$ and $\mu(\bv) > 1+\frac{1}{r(\bv)}$, or

\item $X = \F_0$ and $\nu(\bv)\cdot F > 1$ and $\nu(\bv) \cdot E > 1$, or  

\item $X = \F_e$ with $e\geq 1$ and $\nu(\bv) \cdot F > 1$ and $\nu(\bv) \cdot E \geq 1$. 
\end{enumerate}
Then $V$ is ample.
\end{theorem}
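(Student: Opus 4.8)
The plan is to combine Gieseker's Lemma with the Weak Brill--Noether theorems and a dimension count for restriction maps. By Gieseker's Lemma it suffices to show that for general $V\in M(\bv)$ no irreducible curve $D\subset X$ has $V|_D$ admitting a trivial quotient. Suppose $V|_D\to\OO_X|_D\to 0$; dualizing (and using that $V$, hence $V|_D$, is locally free) gives an inclusion $\OO_D\hookrightarrow V^\ast|_D$, so $H^0(D,V^\ast|_D)\neq 0$. So I will show that for general $V$ this cannot happen for any irreducible $D$.

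Next I would translate this into a numerical condition. The slope hypotheses force $\nu(\bv)$ to be big and nef with $\mu(\bv)>0$, so by stability $\Hom(V,\OO_X)=0$ and, since $D$ is effective, $\Hom(V,\OO_X(-D))=0$; the latter is Serre dual to $H^2(V(K_X+D))=0$. Applying $\Hom(V,-)$ to $0\to V^\ast(-D)\to V^\ast\to V^\ast|_D\to 0$ and using $H^0(V^\ast)=\Hom(V,\OO_X)=0$ shows that $H^0(V^\ast|_D)\neq 0$ forces $H^1(V^\ast(-D))\neq 0$, i.e.\ $H^1(V(K_X+D))\neq 0$ by Serre duality. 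For general $V$ the twist $V(K_X+D)$ is a general $L$-prioritary sheaf of its Chern character, and the numerical hypotheses keep its slope in the range where Theorems \ref{thm:weakBNP2} and \ref{thm:weakBNFe} apply; combining ``at most one nonzero cohomology group'' with the vanishing of $H^2$ shows $H^1(V(K_X+D))\neq 0$ if and only if $\chi(\bv(K_X+\delta))<0$, where $\delta=[D]$. Hence if the general $V$ is not ample, some class $\delta$ realized by an irreducible curve satisfies the purely numerical inequality $\chi(\bv(K_X+\delta))<0$, equivalently $P(\nu(\bv)+K_X+\delta)<\Delta(\bv)$ by Riemann--Roch.

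It remains to rule out the finitely many such ``bad'' classes. The classification of globally generated characters (Theorems \ref{thm:CH_globalgenerationP2}--\ref{thm:CH_globalgenerationF0}, or just Corollary \ref{cor:ggcriterion}) bounds $\Delta(\bv)$ in terms of $\nu(\bv)$, and a case analysis then shows that only finitely many classes $\delta$ realized by irreducible curves satisfy $P(\nu(\bv)+K_X+\delta)<\Delta(\bv)$: on $\P^2$ these are exactly lines, and on $\F_e$ they lie among $F$, $E$, and sections of small degree. For each such $\delta$ the irreducible curves of class $\delta$ are smooth rational curves, and $(K_X+\delta)\cdot H<0$, so every semistable sheaf is $D$-prioritary; by the completeness-of-restriction discussion of \S\ref{sec:preliminaries} the restriction morphism from $M(\bv)$ to the stack $\Coh(\P^1)$ of bundles of rank $r=\rk(\bv)$ and degree $d=c_1(\bv)\cdot\delta$, $V\mapsto V|_D$, is smooth. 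The non-ample locus $\mathcal{B}\subset\Coh(\P^1)$ is the union of splitting-type strata having a factor of degree $\le 0$, and an elementary computation gives $\codim\mathcal{B}\ge d-(r-1)$; the strict slope inequalities together with the integrality of $d$ then yield $\codim\mathcal{B}>\dim|\delta|$ for every bad class (for instance on a line $d=r\mu(\bv)\ge r+2$, so $\codim\mathcal{B}\ge 3>2=\dim|H|$). Since the preimage of $\mathcal{B}$ under the smooth restriction map has codimension at least $\codim\mathcal{B}$ in $M(\bv)$, the locus of $V$ whose restriction to some irreducible curve of class $\delta$ is non-ample has codimension at least $\codim\mathcal{B}-\dim|\delta|>0$; taking the union over the finitely many bad classes, the general $V$ is ample.

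I expect the main obstacle to be the case analysis on $\F_e$: extracting the bound on $\Delta(\bv)$ from each branch of the global-generation classification, enumerating the finitely many bad classes, and verifying $\codim\mathcal{B}>\dim|\delta|$ in each (including classes whose linear systems contain reducible or singular members, which must be excluded or treated via normalization). A secondary technical point is the precise stack-theoretic statement that the restriction morphism is smooth, together with the resulting codimension estimate for its preimage of $\mathcal{B}$.
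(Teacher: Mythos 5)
Your overall strategy coincides with the paper's: Gieseker's Lemma, Serre duality to reduce the question to $H^1(V(K_X+D))$, Weak Brill--Noether to reduce to the finitely many classes with $\chi(\bv(K_X+D))<0$, and then a codimension count for restriction to the smooth rational curves in those classes compared with $\dim|D|$. The first genuine problem is your enumeration of the bad classes. On $\P^2$ they are \emph{not} only lines: globally generated characters with $\chi(\bv(-H))<0$ and $\mu(\bv)>1+\tfrac{1}{r}$ exist (case (3) of Theorem \ref{thm:CH_globalgenerationP2}), so $D=2H$ must be treated; on $\F_1$ the class $2E+2F$ also occurs; and on $\F_e$ the bad sections $E+bF$ can have $b$ arbitrarily large depending on $\Delta(\bv)$, so ``small degree'' is only relative to $\bv$. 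All of these succumb to the same dimension count (cf.\ Lemma \ref{lem:bad_curves} and the case analysis concluding the paper's proof), but as written your verification covers only lines, and the bound $\codim\mathcal{B}>\dim|\delta|$ for the sections $E+bF$ (where both sides grow with $b$) is exactly the case that needs the strict slope inequalities and is not checked.

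The second and more serious gap is your justification that a general $V$ is $D$-prioritary for the bad classes, namely ``$(K_X+\delta)\cdot H<0$, so every semistable sheaf is $D$-prioritary.'' This inequality fails for the large bad sections. For example, on $\F_1$ take $\nu(\bv)=2E+20F$ and $45<\Delta(\bv)\leq 57$: the hypotheses of the theorem hold, the general sheaf is globally generated by Corollary \ref{cor:ggcriterion} since $\chi(\bv(-F))=r(57-\Delta)\geq 0$, and $D=E+5F$ is bad because $\chi(\bv(K_X+D))=r(45-\Delta)<0$, yet $(K_X+D)\cdot H=(-E+2F)\cdot(E+2F)=1>0$. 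Without $D$-prioritariness the restricted family on curves in $|D|$ is not known to be complete (equivalently, your restriction morphism to $\Coh(\P^1)$ is not known to be smooth), and the codimension estimate for the preimage of $\mathcal{B}$ collapses. The paper closes this with Corollary \ref{cor:bad_prioritary}: for a bad class, Lemma \ref{lemma:general_vs_verygeneral} gives $h^0(V(K_X+D))=0$, i.e.\ $\Ext^2(V,\OO(-D))=0$, and then the globally generated presentation $0\to M\to\OO^a\to V\to 0$ twisted by $\OO(-D)$ shows $\Ext^2(V,V(-D))=0$; you need this argument (or a substitute) for your proof to go through. A minor further point: for the infinitely many classes with $\chi(\bv(K_X+D))\geq 0$ your argument as stated only controls a very general $V$; this is harmless (ampleness is open and $M(\bv)$ is irreducible, or one argues as in Lemma \ref{lemma:general_vs_verygeneral}(2)), but it should be said.
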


Throughout the section, we let $\bv$ be a character satisfying the assumptions of Theorem \ref{thm:main-gg}, and we take $V\in M(\bv)$ to be a general globally generated vector bundle.  By Gieseker's Lemma, to show that $V$ is ample, we must show that for every irreducible curve $D\subset X$, the bundle $V|_D$ admits no trivial quotient.  Since $V$ is globally generated, this is equivalent to the vanishing $\Hom(V,\OO_D) = 0$.  Applying $\Hom(V,-)$ to the restriction sequence $$0\to \OO(-D)\to \OO\to \OO_D\to 0,$$ stability shows $\Hom(V,\OO) = 0$, and so there is an exact sequence $$0\to \Hom(V,\OO_D) \to \Ext^1(V,\OO(-D)).$$ We consider $\Ext^1(V,\OO(-D)).$  By Serre duality, $$\ext^1(V,\OO(-D)) = h^1(V(K_X+D)),$$ and we need to understand the cohomology of $V(K_X+D)$.  Our first result in this section shows that if $V\in M(\bv)$ is general, then this cohomology is determined by the Euler characteristic.

\begin{lemma}\label{lemma:general_vs_verygeneral}Let $\bv$ satisfy the assumptions of Theorem \ref{thm:main-gg}.
\begin{enumerate}
\item If $D\subset X$ is an irreducible curve class and $V\in M(\bv)$ is general (depending on $D$), then $V(K_X+D)$ has nonspecial cohomology.
\item If $V\in M(\bv)$ is general, then for any irreducible curve class $D\subset X$, the bundle $V(K_X+D)$ has nonspecial cohomology.
\end{enumerate}
\end{lemma}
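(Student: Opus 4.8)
Both parts are deduced from the Weak Brill--Noether theorems (Theorems~\ref{thm:weakBNP2} and~\ref{thm:weakBNFe}). Part (1) is essentially a direct application after a short numerical check, and part (2) upgrades it to a statement valid simultaneously for every irreducible curve class: after disposing of $H^2$ by semistability, the point is that only finitely many curve classes can cause trouble, so one takes a finite intersection of the dense open loci supplied by part (1).

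\emph{Part (1).} Since $V\in M(\bv)$ is semistable it is $L$-prioritary, and tensoring by $\OO_X(K_X+D)$ leaves $\Ext^2(V,V(-L))$ unchanged and preserves the discriminant, which is $\geq 0$ by the Bogomolov inequality. Hence $V(K_X+D)$ is again an $L$-prioritary vector bundle with $\Delta\geq 0$, and tensoring by $\OO_X(K_X+D)$ is an isomorphism of stacks $\cP_L(\bv)\xrightarrow{\;\sim\;}\cP_L\bigl(\bv\cdot\ch\OO_X(K_X+D)\bigr)$ carrying the dense open substack of semistable sheaves onto a dense open substack. So it is enough to check that the twisted character satisfies the hypotheses of the relevant Weak Brill--Noether theorem. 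On $\P^2$ there is nothing beyond $\Delta\geq 0$ to verify. On $\F_e$ one needs $\bigl(\nu(\bv)+K_X+D\bigr)\cdot F\geq -1$ and $\bigl(\nu(\bv)+K_X+D\bigr)\cdot E\geq -1$, which follow from $K_X\cdot F=-2$, $K_X\cdot E=e-2$, the facts that $D\cdot F\geq 0$ always and $D\cdot E\geq 0$ for $D\neq E$, and the input inequalities $\nu(\bv)\cdot F>1$, $\nu(\bv)\cdot E\geq 1$ (strict in $E$ when $e=0$); the only tight case is $D=E$, where $\bigl(\nu(\bv)+K_X+E\bigr)\cdot E=\nu(\bv)\cdot E-2\geq -1$.

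\emph{Part (2).} By Serre duality $h^2\bigl(V(K_X+D)\bigr)=\hom\bigl(V,\OO_X(-D)\bigr)$, and this vanishes for every semistable $V$ and every nonzero effective $D$, since $\mu(V)>0>\mu\bigl(\OO_X(-D)\bigr)$ (the hypotheses of Theorem~\ref{thm:main-gg} force $\mu(V)>0$). So for any $V$ it suffices to show $h^1\bigl(V(K_X+D)\bigr)=0$ for all but finitely many $D$, and to invoke part (1) for the rest. To exhibit a fixed finite set $S$ of classes and a proper closed $W\subseteq M(\bv)$ with $h^1\bigl(V(K_X+D)\bigr)=0$ whenever $V\notin W$ and $D\notin S$: boundedness of $M(\bv)$ together with Fujita vanishing handles, uniformly in $V$, every $D$ with $D-m_0H$ nef for a fixed $m_0$, which on $\P^2$ already leaves only finitely many exceptions. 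On $\F_e$ the remaining classes run to infinity parallel to a ray of the nef cone, and for these one restricts to the ruling $\pi\colon\F_e\to\P^1$: writing $D=aE+bF$ and $V(K_X+D)=V\bigl((a-2)E\bigr)\otimes\pi^\ast\OO_{\P^1}(b-e-2)$, the Leray spectral sequence forces $h^1=0$ once the twisting parameter is large, provided $R^1\pi_\ast\bigl(V((a-2)E)\bigr)=0$. This holds for general $V$ because the base $\P^1$ is one-dimensional: semistable sheaves are $F$-prioritary, so the restriction maps to the stacks of sheaves on the fibers are smooth (Section~\ref{sec:preliminaries}), and consequently a general $V$ has no fiber over which the splitting type jumps by more than one, hence (for $a\geq 1$) no fiber over which $V((a-2)E)$ acquires a summand of degree $\leq -2$. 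The finitely many classes left over (small twists of $E$ and $F$) go into $S$ and are covered by part (1); taking $V$ outside the proper closed set $W\cup\bigcup_{D\in S}Z_D$, where $Z_D$ is the exceptional locus from part (1), proves the lemma.

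\emph{Expected main obstacle.} The delicate step is the finiteness reduction in part (2) on the Hirzebruch surfaces: along a boundary ray of the nef cone one coordinate of $\nu$ stays bounded while the other tends to infinity, so no uniform Serre or Fujita vanishing on $\F_e$ itself applies, and one is forced to pass to the ruling and control the restriction of a general $V$ to \emph{every} fiber rather than just the general fiber — which is exactly where the one-dimensionality of the base and the smoothness of the restriction maps become essential.
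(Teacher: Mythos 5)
Your part (1) is essentially the paper's argument (twist, stay in the prioritary stack, check the slope hypotheses of the Weak Brill--Noether theorems, with $D=E$ the tight case), so that half is fine. The gap is in part (2), specifically in the finiteness reduction on $\F_e$ with $e\geq 1$ --- exactly the step you flag as delicate. After the uniform Fujita step, the classes $D=aE+bF$ with $D-m_0H$ not nef are those with $a<m_0$ \emph{or} $b-ae<m_0$; since $H=(E+eF)+F$, there are \emph{two} infinite families of exceptions, one running parallel to the $F$-ray ($a$ bounded, $b\to\infty$) and one parallel to the $E+eF$-ray ($b-ae$ bounded, $a\to\infty$), and all classes $a(E+eF)+cF$ with $0\le c<m_0$ are irreducible and escape Fujita for every $a$. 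Your Leray argument handles the first family (for each of the finitely many values of $a$, the sheaf $\pi_\ast\bigl(V((a-2)E)\bigr)$ is fixed and a large twist kills its $H^1$, modulo a routine uniformity-in-$V$ remark via boundedness), but for the second family the phrase ``once the twisting parameter is large, provided $R^1\pi_\ast=0$'' is not enough: as $a\to\infty$ the pushforward $\pi_\ast\bigl(V((a-2)E)\bigr)$ itself acquires summands of degree roughly $-(a-2)e+\deg$-terms coming from $V|_E$, i.e.\ its negativity grows linearly in $a$ at the same rate $e$ as the available twist $b-e-2\approx ae$, so the vanishing of $H^1\bigl(\P^1,\pi_\ast(V((a-2)E))\otimes\OO(b-e-2)\bigr)$ is a genuine estimate, not a consequence of ``large twist.'' Controlling it requires information about the restriction of a general $V$ to curves other than the fibers (e.g.\ to $E$, via the filtration coming from $0\to V((k-1)E)\to V(kE)\to V(kE)|_E\to 0$, or to curves of class $E+eF$), which your proposal does not supply; knowing the splitting of $V$ on every fiber, which is all your argument controls, does not determine the positivity of the pushforward.

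For comparison, the paper avoids this issue entirely: it first shows by a Riemann--Roch estimate that $\chi(\bv(K_X+D))\geq 0$ once $b\geq B$ (this covers both infinite directions at once, since irreducibility forces $b\geq ae$), takes as its finite list all irreducible classes with $b\leq B$, and then propagates higher-cohomology vanishing from the finite list to all other classes by induction, adding one copy of $F$ or $E+eF$ at a time and using that a general stable $V$ is prioritary with respect to these classes, hence has balanced (in particular nonnegative-enough) splitting on a general rational curve of class $F$ or $E+eF$, so the restriction sequence kills $H^1$ and $H^2$ step by step. If you want to salvage your route, you need precisely this extra input along the $E+eF$ direction (balancedness of $V|_{E}$ or $V|_{E+eF}$ for general $V$, plus a filtration or induction), at which point your argument essentially converges to the paper's; the Fujita/Leray machinery you invoke is heavier than the paper's elementary induction and, as written, still leaves the second family unproved.
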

\begin{proof}
(1) The first part of the lemma is a direct application of the Weak Brill-Noether theorems for $\P^2$ and $\F_e$.  Since $V$ is general in $M(\bv)$, the bundle $V(K_X+D)$ is general in $M(\bv(K_X+D))$.

If $X = \P^2$, then the conclusion comes from Theorem \ref{thm:weakBNP2}.

If $X = \F_e$, then we need to verify the slope assumptions in Theorem \ref{thm:weakBNFe}. Since $D$ is irreducible, we have $D\cdot F \geq 0$ and $D\cdot E \geq -e$.  Then  $$\nu(V(K_X+D))\cdot F = \nu(V)\cdot F +K_X\cdot F + D\cdot F > 1-2+0=-1$$ and $$\nu(V(K_X+D))\cdot E = \nu(V)\cdot E+K_X\cdot E+D\cdot E \geq 1+(e-2)-e=-1.$$ Thus the hypotheses of Theorem \ref{thm:weakBNFe} are satisfied.

(2) By (1), it is clear that for a \emph{very general} $V\in M(\bv)$ the bundle $V(K_X+D)$ has nonspecial cohomology for every irreducible curve class $D$.  But in fact, we will show that there is a finite list $D_1,\ldots,D_k$ of irreducible curve classes such that if $V\in M(\bv)$ is general and $V(K_X+D_i)$ has nonspecial cohomology for all $i$ then $V(K_X+D)$ has nonspecial cohomology for all irreducible curve classes $D$.

\emph{Case 1: $X=\P^2$}.  In this case, we take our list to be $H,2H,\ldots,kH$, where $k\geq 1$ is chosen such that $\chi(V(K_{\P^2}+kH)) \geq 0$.  We assume $V\in M(\bv)$ is general enough that its splitting type on a line $H$ is balanced.   Suppose $V(K_{\P^2}+kH)$ has no higher cohomology.  We have $$\mu(V(K_{\P^2}+(k+1)H))=\mu(V)-3+k+1\geq 0,$$ so $V(K_{\P^2}+(k+1)H)|_H$ has no higher cohomology.  Then from the restriction sequence $$0\to V(K_{\P^2}+kH) \to V(K_{\P^2}+(k+1)H)\to V(K_{\P^2}+(k+1)H)|_H\to 0,$$ we see that $V(K_{\P^2}+(k+1)H)$ has no higher cohomology.  Continuing inductively completes the argument.

\emph{Case 2: $X=\F_e$}.  We assume throughout that $e\geq 1$; a similar argument can be given when $e=0$.  In case $X = \F_e$, write $\nu(V) = xE +yF$.  Let $D= aE+bF$ be an irreducible divisor class other than $E$, so $0\leq ae \leq b$. Then \begin{align*}\frac{\chi(V(K_{\F_e}+D))}{\rk(V)} &= (x-2+a+1)(y-(e+2)+b+1-\frac{1}{2}e(x-2+a))-\Delta(V)\\
&\geq (x-1)(y-(e+2)+\frac{b}{2}+1-\frac{1}{2}e(x-2))-\Delta(V).
\end{align*}
Here $x = \nu(V)\cdot F> 1$, so there is an integer $B\geq 1$ such that if $b\geq B$, then $\chi(V(K_{\F_E}+D))\geq 0$.  

For our list of divisors, take the list of all irreducible divisors of the form $aE+bF$ with $b\leq B$.  Then any irreducible divisor not in this list can be obtained from a divisor of the form $aE+BF$ by repeatedly adding copies of $F$ or $E+eF$.  Let $C$ be one of the curve classes $F$ or $E+eF$.  Then curves of class $C$ are rational.  Furthermore, $-(K_X + C)$ is the class of a curve, so stable vector bundles are automatically $C$-prioritary.  Therefore  a general $V\in M(\bv)$ has balanced splitting type on a general curve of class $C$.  Let $D = aE+bF$ be an irreducible curve class with $b\geq B$, and inductively assume we know that $V(K_{\F_e}+D)$ has no higher cohomology.   By adjunction, $(K_{\F_e}+C)\cdot C = -2$, and $D^2 \geq 0$, so $$\nu(V(K_{\F_e}+D+C))\cdot C = \nu(V)\cdot C+D\cdot C - 2\geq -1.$$ Therefore $V(K_{\F_e}+D+C)|_C$ has no higher cohomology.  The restriction sequence $$0\to V(K_{\F_e} + D)\to V(K_{\F_e}+D+C) \to V(K_{\F_e}+D+C)|_C\to 0$$ then shows that $V(K_{\F_e}+D+C)$ has no higher cohomology.  
\end{proof}

The following immediate corollary shows that the vast majority of curve classes cannot provide an obstruction to ampleness.

\begin{corollary}\label{cor:big-curves}
Let $\bv$ satisfy the assumptions of Theorem \ref{thm:main-gg}, and let $V\in M(\bv)$ be general.  If $D$ is an irreducible curve such that $$\chi(V(K_X+D))\geq 0,$$ then $V|_D$ admits no trivial quotient.
\end{corollary}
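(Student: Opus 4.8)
The plan is to read the corollary off directly from Lemma \ref{lemma:general_vs_verygeneral}(2) together with the reductions recorded in the paragraph preceding that lemma. Recall that, since $V$ is globally generated, Gieseker's Lemma tells us $V|_D$ admits no trivial quotient precisely when $\Hom(V,\OO_D)=0$; applying $\Hom(V,-)$ to $0\to\OO(-D)\to\OO\to\OO_D\to 0$ and using $\Hom(V,\OO)=0$ (which follows from stability) embeds $\Hom(V,\OO_D)$ into $\Ext^1(V,\OO(-D))$, and Serre duality gives $\ext^1(V,\OO(-D))=h^1(V(K_X+D))$. So it suffices to prove $h^1(V(K_X+D))=0$.

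First I would apply Lemma \ref{lemma:general_vs_verygeneral}(2) to the fixed general bundle $V\in M(\bv)$: the twist $V(K_X+D)$ has nonspecial cohomology, that is, at most one nonzero cohomology group. Then I would bring in the hypothesis $\chi(V(K_X+D))\geq 0$. If the unique possibly-nonzero cohomology group is $H^0$ or $H^2$, then automatically $h^1(V(K_X+D))=0$; the remaining possibility, that $H^1$ is the only nonzero group, is impossible because it would force $\chi(V(K_X+D))=-h^1(V(K_X+D))<0$, contradicting the hypothesis. (On $\F_e$ one in fact knows $H^2(V(K_X+D))=0$ outright from Theorem \ref{thm:weakBNFe}.) In every case $h^1(V(K_X+D))=0$, hence $\Hom(V,\OO_D)=0$ and $V|_D$ admits no trivial quotient, as claimed.

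There is essentially no hard step remaining: the whole difficulty of upgrading Weak Brill-Noether to a statement that holds simultaneously for all irreducible curve classes $D$ has already been absorbed into Lemma \ref{lemma:general_vs_verygeneral}, via the device of reducing the infinitely many classes to a finite list and then propagating nonspeciality by adding rational curves. The only point that takes any care here is the elementary observation, spelled out above, that nonspecial cohomology together with $\chi\geq 0$ forces the vanishing of $H^1$.
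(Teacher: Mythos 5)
Your argument is correct and is essentially the paper's own proof: invoke Lemma \ref{lemma:general_vs_verygeneral}(2) to get nonspecial cohomology of $V(K_X+D)$, combine with $\chi(V(K_X+D))\geq 0$ to force $h^1(V(K_X+D))=0$, and then the reduction preceding that lemma ($\Hom(V,\OO_D)\hookrightarrow\Ext^1(V,\OO(-D))\cong H^1(V(K_X+D))^\ast$) gives $\Hom(V,\OO_D)=0$, hence no trivial quotient. Your only cosmetic slip is attributing the equivalence ``no trivial quotient $\iff\Hom(V,\OO_D)=0$'' to Gieseker's Lemma rather than to the global generation discussion in the paper, but the implication you actually use is the trivial direction, so nothing is affected.
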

\begin{proof}
The bundle $V(K_X+D)$ has no $H^1$ by Lemma \ref{lemma:general_vs_verygeneral}.  Following the discussion preceding the proof of Lemma \ref{lemma:general_vs_verygeneral}, we see that $\Hom(V,\OO_D)= 0$.
\end{proof}

Conversely, if the assumptions of Corollary \ref{cor:big-curves} are not satisfied, then families of stable sheaves on $X$ restrict to curves of class $D$ as nicely as possible.

\begin{corollary}\label{cor:bad_prioritary}
Let $\bv$ satisfy the assumptions of Theorem \ref{thm:main-gg}, and let $V\in M(\bv)$ be general.  If $D$ is an irreducible curve such that $$\chi(V(K_X+D))<0,$$ then $V$ is $D$-prioritary.
\end{corollary}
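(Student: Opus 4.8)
The plan is to translate the prioritary condition into a cohomology vanishing and then invoke global generation together with Lemma~\ref{lemma:general_vs_verygeneral}. Since $V$ is a vector bundle, being $D$-prioritary is precisely the statement $\Ext^2(V,V(-D))=0$, and by Serre duality on the surface $X$ this is equivalent to
\[
\Hom(V,V(K_X+D))=0.
\]
So the goal is to show that the general globally generated $V\in M(\bv)$ admits no nonzero map into its twist $V(K_X+D)$.

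First I would use Lemma~\ref{lemma:general_vs_verygeneral}(2): for a general $V\in M(\bv)$, the bundle $V(K_X+D)$ has nonspecial cohomology, i.e.\ at most one nonzero cohomology group, and this holds simultaneously for every irreducible curve class $D$. Combining this with the hypothesis $\chi(V(K_X+D))<0$ and writing $\chi=h^0-h^1+h^2$, a single nonzero $H^0$ or $H^2$ would force $\chi>0$; hence the only cohomology group of $V(K_X+D)$ that can be nonzero is $H^1$. In particular $H^0(X,V(K_X+D))=0$.

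Next, since the standing hypothesis of Theorem~\ref{thm:main-gg} guarantees that the general $V\in M(\bv)$ is globally generated, I would fix a surjection $\OO_X^{\oplus N}\onto V$ (for instance with $N=h^0(V)$). Applying the contravariant left-exact functor $\Hom(-,V(K_X+D))$ yields an injection
\[
\Hom(V,V(K_X+D))\hookrightarrow \Hom(\OO_X^{\oplus N},V(K_X+D))=H^0(X,V(K_X+D))^{\oplus N}=0,
\]
so $\Hom(V,V(K_X+D))=0$, and therefore $\Ext^2(V,V(-D))=0$ by Serre duality. Since $V$ is also torsion-free, it is $D$-prioritary.

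There is no real obstacle: the substantive work was already carried out in Lemma~\ref{lemma:general_vs_verygeneral}, which provides the nonspeciality of $V(K_X+D)$ uniformly in $D$, and the remainder is the elementary observation that a quotient of a trivial bundle cannot map nontrivially into a bundle with no global sections, packaged through Serre duality. The one point that needs a moment's care is that the word ``general'' is invoked twice — once for Lemma~\ref{lemma:general_vs_verygeneral}(2) and once for global generation — but each is a dense open condition on the irreducible moduli space $M(\bv)$, so a single general $V$ satisfies both at once.
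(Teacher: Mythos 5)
Your proof is correct and follows essentially the same route as the paper: both arguments combine Lemma \ref{lemma:general_vs_verygeneral} with $\chi(V(K_X+D))<0$ to get $h^0(V(K_X+D))=0$, and then use the surjection $\OO_X^{\oplus N}\onto V$ coming from global generation to kill the relevant Ext group. The only difference is cosmetic: you apply Serre duality to $\Ext^2(V,V(-D))$ and use left-exactness of $\Hom(-,V(K_X+D))$, whereas the paper applies $\Hom(V,-)$ to the twisted presentation and uses the vanishing of $\Ext^2(V,\OO(-D))\cong H^0(V(K_X+D))^\ast$.
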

\begin{proof}
By Lemma \ref{lemma:general_vs_verygeneral}, $$\ext^2(V,\OO(-D)) = h^0(V(K_X+D))=0.$$ Since $V$ is globally generated, it fits into an exact sequence of the form $$0\to M\to \OO^a \to V\to 0.$$ Twisting by $\OO(-D)$ and applying $\Hom(V,-)$ shows that $\Ext^2(V,V(-D))$ is a quotient of $\Hom(V,\OO(-D))=0$.  Therefore $V$ is $D$-prioritary.
\end{proof}

We now determine the possible curve classes $D$ where Corollary \ref{cor:big-curves} does not apply.

\begin{lemma}\label{lem:bad_curves}
Let $\bv$ satisfy the assumptions of Theorem \ref{thm:main-gg}.  There are only finitely many irreducible curve classes $D$ with $$\chi(\bv(K_X+D)) < 0.$$ Furthermore, any such $D$ must have one of the following forms.
\begin{enumerate}
\item If $X = \P^2$, then $D=H$ or $D=2H$.
\item If $X = \F_0$, then $D$ is of the form $bE + F$ or $E + bF$ for some $b\geq 0$.
\item If $X = \F_1$, then $D$ is of the form $F$, $2E+2F$, or $E+bF$ for some  $b\geq 0.$
\item If $X = \F_e$ with $e\geq 2$, then $D$ is of the form $F$, $E$,  or $E+bF$ for some $b\geq e$.
\end{enumerate}
In every case, $D$ is a smooth rational curve.
\end{lemma}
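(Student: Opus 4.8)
By Riemann--Roch, $\chi(\bv(K_X+D))=r(\bv)\bigl(P(\nu(\bv)+K_X+D)-\Delta(\bv)\bigr)$, so $\chi(\bv(K_X+D))<0$ is equivalent to $P(\nu(\bv)+K_X+D)<\Delta(\bv)$. On the other hand, since the general $V\in M(\bv)$ is globally generated it has $H^0(V)\neq 0$, and the Weak Brill--Noether Theorems~\ref{thm:weakBNP2} and~\ref{thm:weakBNFe} apply to $\bv$ (their hypotheses hold because $\bv$ is the character of a semistable, hence $L$-prioritary, sheaf with $\Delta(\bv)\ge 0$ and $\nu(\bv)\cdot E,\nu(\bv)\cdot F\ge -1$). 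Hence the general $V$ has a single nonzero cohomology group, which must be $H^0$, so $\chi(\bv)=h^0(V)>0$, i.e. $\Delta(\bv)<P(\nu(\bv))$. Combining, every bad class $D$ satisfies $P(\nu(\bv)+K_X+D)<P(\nu(\bv))$. Expanding the formula $P(\nu)=\chi(\OO_X)+\tfrac12(\nu^2-\nu\cdot K_X)$ and using the adjunction identity $D\cdot(D+K_X)=2p_a(D)-2$ gives
\[
P(\nu+K_X+D)-P(\nu)=\nu\cdot(K_X+D)+p_a(D)-1
\]
for every class $\nu$, so every bad $D$ satisfies $\nu(\bv)\cdot(K_X+D)+p_a(D)<1$.

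\textbf{Finiteness.} Under the hypotheses one has $\nu(\bv)\cdot E\ge 1$, $\nu(\bv)\cdot F\ge 1$ on $\F_e$, and $\nu(\bv)\cdot H=\mu(\bv)>1$ on $\P^2$; in particular $\nu(\bv)$ is ample. Writing $D=aE+bF$ with $a,b\ge 0$ (as $D$ is effective), the inequality above yields $a+b\le\nu(\bv)\cdot D<1-\nu(\bv)\cdot K_X$, which bounds both $a$ and $b$; similarly $d\le\nu(\bv)\cdot(dH)<1-\nu(\bv)\cdot K_{\P^2}$ on $\P^2$. Hence only finitely many classes $D$ are bad.

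\textbf{Classification and smoothness.} On $\P^2$ the inequality reads $\mu(\bv)d+\binom{d-1}{2}<1+3\mu(\bv)$, which since $\mu(\bv)>1$ forces $d\in\{1,2\}$. On $\F_e$ write $D=aE+bF$. The classes $E$ and $F$ occur in the asserted list, and when $a=0$ the only irreducible class is $F$, so assume $a\ge 1$; if moreover $D\neq E$, then $D\cdot E=b-ea\ge 0$ since $D$ and $E$ are distinct irreducible curves. For $a=1$ one computes $p_a(E+bF)=0$, and the inequality becomes a linear upper bound on $b$, so $D=E+bF$, which is in the list. For $a\ge 2$ I will substitute the adjunction formula for $p_a(aE+bF)$ into the inequality and, using $\nu(\bv)\cdot E\ge1$, $\nu(\bv)\cdot F>1$ and $b\ge ea$, show the left-hand side is $\ge 1$ --- so no such $D$ is bad --- with the single exception $(e,a,b)=(1,2,2)$, i.e. $D=2E+2F$ on $\F_1$, which is exactly the sporadic class in the list. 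On $\F_0$, applying the same analysis with the roles of $E$ and $F$ interchanged additionally produces the classes $bE+F$. Finally, every class $D$ occurring in the resulting lists has $p_a(D)=0$ by a one-line adjunction computation, and an integral curve of arithmetic genus zero is a smooth rational curve; this finishes the proof.

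\textbf{Expected main obstacle.} The only genuine work is the $a\ge 2$ analysis on $\F_e$: at the minimal admissible value $b=ea$ the positive terms in the inequality nearly cancel, and one must keep careful track of which surface contributes the surviving sign. The class $2E+2F$ survives on $\F_1$ precisely because $e=1$ there, so for $a=2$ the term $(a-2)\,\nu(\bv)\cdot E$ vanishes while the contribution of $b-e-2=-1$ is negative; one should also note that $2E+2F$ is not even an irreducible class on $\F_e$ for $e\ge 2$, since it meets $E$ negatively, which explains why no analogous exception appears there.
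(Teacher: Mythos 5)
Your argument is correct, but it takes a genuinely different route from the paper. The paper's proof is a two-line reduction: since the general $V$ is globally generated, if $K_X+D$ is effective then $V(K_X+D)$ has a section, and nonspecial cohomology (Lemma \ref{lemma:general_vs_verygeneral}) then forces $\chi(V(K_X+D))\geq 0$; so the bad classes are exactly among the irreducible classes $D$ with $K_X+D$ not effective, which one simply lists, and finiteness is quoted from the proof of Lemma \ref{lemma:general_vs_verygeneral}(2). You instead extract from global generation only the inequality $\chi(\bv)>0$ (via Weak Brill--Noether), and turn $\chi(\bv(K_X+D))<0$ into the purely numerical necessary condition $\nu(\bv)\cdot(K_X+D)+p_a(D)<1$ via Riemann--Roch and adjunction, which you then analyze class by class. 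Your route is more computational but self-contained and gives finiteness with an explicit bound ($a+b<1-\nu(\bv)\cdot K_X$), whereas the paper's effectivity trick avoids all case analysis. The one step you defer --- the $a\geq 2$ analysis on $\F_e$ --- does close as you predict: writing $p_a(aE+bF)=(a-1)\bigl(b-1-\tfrac{ea}{2}\bigr)$, the left-hand side $(a-2)\,\nu\cdot E+(b-e-2)\,\nu\cdot F+p_a(D)$ is $\geq 1$ for all irreducible classes with $a\geq 2$, $b\geq ea$ when $e\geq 1$, except $(e,a,b)=(1,2,2)$; but note your claim of a ``single exception'' must be read as restricted to $e\geq 1$, since on $\F_0$ the classes $aE+F$ ($a\geq 2$) also survive --- exactly the classes your symmetric analysis produces, so the statement of your Classification step should be phrased to treat $\F_0$ separately from the start rather than as an afterthought. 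With that phrasing fixed, your proof is complete and matches the paper's lists, including the smooth-rationality conclusion via $p_a(D)=0$ for integral curves.
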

\begin{proof}
Let $V\in M(\bv)$ be general.  We saw in the proof of Lemma \ref{lemma:general_vs_verygeneral} (2) that there are only finitely many irreducible curve classes $D$ with $\chi(\bv(K_X+D)) <0$.  If $K_X+D$ is an effective divisor, then since $V$ is globally generated the bundle $V(K_X+D)$ has a section.  By Lemma \ref{lemma:general_vs_verygeneral} this implies $\chi(V(K_X+D)) \geq 0$. Thus it suffices to list the irreducible curve classes $D$ such that $K_X+D$ is not effective.  These classes are of the above listed forms.
\end{proof}

For each of the curve classes $D$ in Lemma \ref{lem:bad_curves}, we need to show that a general $V\in M(\bv)$ does not admit a trivial quotient $\OO_C$ for any irreducible curve $C\in|D|$.  The key step is to compute the codimension of the locus of sheaves admitting a trivial quotient on a single curve.

\begin{lemma}
Let $\cV/S$ be a complete family of globally generated vector bundles on  $\P^1$ of slope $\mu \geq 1$ and rank $r$, parameterized by a smooth irreducible base $S$.  Let $Z\subset S$ be the subset $$Z = \{s\in S:\hom(\cV_s,\OO_{\P^1})>0\}.$$ Then every component of $Z$ has codimension at least $$r\mu-r+1.$$
\end{lemma}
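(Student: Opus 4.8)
The plan is to stratify $Z$ by splitting type and to reduce the codimension estimate to a computation on the moduli stack of bundles on $\P^1$. Since each $\mathcal V_s$ is globally generated it splits as $\bigoplus_{i=1}^r \OO_{\P^1}(a_i)$ with $a_1 \geq \cdots \geq a_r \geq 0$ and $\sum_i a_i = r\mu =: d$; note $d \geq r$ because $\mu \geq 1$. Then $\hom(\mathcal V_s, \OO_{\P^1}) = \#\{i : a_i = 0\}$, so $Z$ is exactly the locus where the splitting type $\vec a(s)$ of $\mathcal V_s$ satisfies $a_r = 0$. In particular $Z$ is closed (upper semicontinuity of $\hom$) and is the finite disjoint union of the locally closed splitting strata $S_{\vec a} = \{s : \vec a(s) = \vec a\}$ taken over the finitely many admissible $\vec a$ (those with $a_1 \geq \cdots \geq a_{r-1} \geq a_r = 0$ and $\sum_i a_i = d$). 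It therefore suffices to show $\codim_S S_{\vec a} \geq d - r + 1$ for every admissible $\vec a$ with $S_{\vec a} \neq \emptyset$.

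The first real step is to move the problem to the stack $\operatorname{Bun}_{r,d}(\P^1)$ of rank $r$, degree $d$ vector bundles on $\P^1$. The family $\mathcal V / S$ induces a morphism $f : S \to \operatorname{Bun}_{r,d}(\P^1)$, and by definition the completeness of $\mathcal V/S$ says precisely that the differential of $f$ at every point — the Kodaira--Spencer map $T_s S \to \Ext^1(\mathcal V_s, \mathcal V_s)$ — is surjective. Since both $S$ and $\operatorname{Bun}_{r,d}(\P^1)$ are smooth, $f$ is then a smooth morphism. As $S_{\vec a}$ is the preimage under $f$ of the splitting stratum $\mathcal B_{\vec a} \subset \operatorname{Bun}_{r,d}(\P^1)$, smoothness of $f$ gives $\codim_S S_{\vec a} = \codim \mathcal B_{\vec a}$. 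Writing $E_{\vec a} = \bigoplus_i \OO_{\P^1}(a_i)$, the stratum $\mathcal B_{\vec a}$ is the residual gerbe $B\Aut(E_{\vec a})$, of dimension $-\hom(E_{\vec a}, E_{\vec a})$; since $\dim \operatorname{Bun}_{r,d}(\P^1) = -\chi(E_{\vec a}^\vee \otimes E_{\vec a}) = -r^2$ (the sheaf $E_{\vec a}^\vee \otimes E_{\vec a}$ has rank $r^2$ and degree $0$, and $\Ext^{\geq 2}$ vanishes on a curve), I conclude $\codim \mathcal B_{\vec a} = \hom(E_{\vec a}, E_{\vec a}) - r^2 = \ext^1(E_{\vec a}, E_{\vec a})$.

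It then remains to prove the purely combinatorial inequality $\ext^1(E_{\vec a}, E_{\vec a}) \geq d - r + 1$ for every admissible $\vec a$. Using $\ext^1(E_{\vec a}, E_{\vec a}) = \sum_{i,j} h^1(\OO_{\P^1}(a_j - a_i)) = \sum_{i,j} \max(0, a_i - a_j - 1)$ and keeping only the pairs $(i,j)$ with $a_j = 0$ and $a_i \geq 1$, one obtains $\ext^1(E_{\vec a}, E_{\vec a}) \geq (r - k)\sum_{i : a_i \geq 1}(a_i - 1) = (r-k)(d - k)$, where $k = \#\{i : a_i \geq 1\}$ satisfies $1 \leq k \leq r - 1$ (the lower bound since $d \geq r$, the upper bound since $a_r = 0$). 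The quadratic $k \mapsto (r-k)(d-k)$ has its minimum at $k = (r+d)/2 \geq r$, hence is decreasing on $[1, r-1]$, so $(r-k)(d-k) \geq (r - (r-1))(d - (r-1)) = d - r + 1$. Combining the three steps, every component of $Z$ has codimension at least $\codim_S Z = \min_{\vec a : S_{\vec a} \neq \emptyset} \codim_S S_{\vec a} \geq d - r + 1 = r\mu - r + 1$.

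I expect the main conceptual point (rather than a genuine obstacle) to be the observation that the codimension is controlled by an \emph{intermediate} stratum --- one with a single trivial summand and an otherwise as-balanced-as-possible splitting --- rather than by the deepest stratum, so that minimizing $\ext^1(E_{\vec a}, E_{\vec a})$ over \emph{all} admissible $\vec a$ is essential. On the technical side, the step requiring the most care is the passage to $\operatorname{Bun}_{r,d}(\P^1)$: one must invoke that this is a smooth algebraic stack with tangent complex $R\Hom(\mathcal V_s, \mathcal V_s)[1]$, that completeness is equivalent to smoothness of the classifying map, and that smooth morphisms preserve the codimension of locally closed substacks. The combinatorial estimate itself is quick once the setup is in place.
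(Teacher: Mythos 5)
Your argument is correct, but it takes a genuinely different route from the paper's. You stratify $Z$ by the full splitting type and transfer the codimension count to the stack $\operatorname{Bun}_{r,d}(\P^1)$ via the classifying map, which is smooth exactly because the family is complete; the codimension of the splitting stratum is $\ext^1(E_{\vec a},E_{\vec a})$, and you conclude by minimizing $(r-k)(d-k)$ over $1\leq k\leq r-1$. The paper instead stratifies by $k=\hom(\cV_s,\OO_{\P^1})$ and uses the canonical sequence $0\to F_1\to \cV_s\to \OO_{\P^1}^{k}\to 0$ (unique since $\cV_s$ is globally generated): the relative flag scheme $Y_k\to S$ maps isomorphically onto $Z_k$ near the relevant point, completeness identifies the normal space of $Z_k$ with $\Ext^1_+(\cV_s,\cV_s)\cong \Ext^1(F_1,\OO_{\P^1}^{k})$, and this gives the exact codimension $-\chi(F_1,\OO_{\P^1}^{k})=k(r\mu-r+k)$, minimized at $k=1$. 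The two computations agree: your lower bound $(r-k)(d-k)$ for a splitting type with $m=r-k$ trivial summands is precisely the paper's $m(r\mu-r+m)$. What each buys: the paper's filtration argument stays entirely with schemes over $S$, needs no case analysis over splitting types, and computes the codimension of each $Z_k$ on the nose; your version must justify the (standard but stack-theoretic) facts that completeness is equivalent to smoothness of the map to $\operatorname{Bun}_{r,d}(\P^1)$, that the splitting stratum there has codimension $\ext^1(E_{\vec a},E_{\vec a})$, and that smooth morphisms preserve codimension, but in exchange it black-boxes well-known structure of bundles on $\P^1$ and replaces the normal-space identification by a short convexity estimate. (Your route is in fact close to an earlier splitting-type argument the authors drafted and discarded, with your direct minimization replacing their induction on the degree.) Two cosmetic points: $\codim_S S_{\vec a}=\codim \mathcal{B}_{\vec a}$ should be asserted only when the preimage is nonempty, which is all you need, and the closedness of $Z$ plays no role in the argument.
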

\begin{proof}
We follow the notation from \cite[\S 15.4]{LePotier}.
Let $Z_k\subset Z$ be the locus where $\hom(\cV_s,\OO_{\P^1}) = k$, and let $s\in Z_k$ be any point.  Since $\cV_s$ is globally generated, there is a unique filtration $$0\to F_1 \to \cV_s \to \OO_{\P^1}^{k}\to 0.$$  Let $Y_k = \Flag(\cV/S; P_1,P_2)\fto{\pi} S$ be the relative flag scheme parameterizing filtrations of $\cV_s$ with these numerical invariants.  Since we are working on a curve and the family $\cV/S$ is complete, $Y_k$ is smooth and the natural map $T_sS\to \Ext_+^1(\cV_s,\cV_s)$ surjects.  Let $t\in Y_k$ be the point corresponding to the above exact sequence.  Near $t$, the map $\pi$ maps $Y_k$ isomorphically onto $Z_k$.  We have an exact sequence $$T_t Y_k \to T_s S \to \Ext^1_+(\cV_s,\cV_s)\to 0.$$ The tangent space $T_s Z_k \subset T_sS$ is the image of $T_t Y_k$, so the normal space to $Z_k$ at $s$ is identified with $$\Ext^1_+(\cV_s,\cV_s) \cong \Ext^1(F_1,\OO_{\P^1}^{k}).$$ Since $\Hom(F_1,\OO_{\P^2}) = 0$, we conclude that $Z_k$ has codimension $$-\chi(F_1,\OO_{\P^2}^{k}) = k(r\mu -r+k).$$ This is clearly minimized for $k=1$.
\end{proof}

Finally we show that the curve classes in Lemma \ref{lem:bad_curves} do not obstruct ampleness to complete the proof of Theorem \ref{thm:main-gg}.

\begin{proof}[Conclusion of the proof of Theorem \ref{thm:main-gg}]
Let $D$ be one of the finitely many curve classes in Lemma \ref{lem:bad_curves}. By Corollary \ref{cor:bad_prioritary}, we may assume that a general $V\in M(\bv)$ is $D$-prioritary.  Let $\cV/S$ be a complete family of globally generated, $D$-prioritary, stable bundles on $X$ of character $\bv$, parameterized by a smooth variety $S$.  Then for any irreducible $C\in |D|$, the family $\cV|_C/S$ is a complete family of globally generated bundles on $C\cong \P^1$.  For a fixed irreducible $C\in |D|$, the locus of $s\in S$ such that $\cV_s$ has $\OO_C$ as a quotient has codimension at least $$c:=r \nu(V)\cdot C -r + 1.$$ As $C$ varies in the family $|D|$ of dimension $d:=h^0(\OO_X(D)) -1$, the general $\cV_s$ will not admit a quotient of the form $\OO_C$ if $d<c$.  We compute $d$ and estimate $c$ case-by-case to verify this.

\emph{Case 1: $X= \P^2$}.  If $D = H$, we get $$c = r\mu(V)-r+1 > r\left(1+\frac{1}{r}\right)-r+1=2$$ so $c\geq 3$, but $d=2$.  If instead $D = 2H$ then $d=5$ and $$c = 2r\mu(V)-r+1> 2r\left(1+\frac{1}{r}\right) - r+1= r+3,$$ so $c\geq 6$ since $r\geq 2$.

\emph{Case 2: $X = \F_e$ and $D = F$}.  We have $d=1$ and $$c = r\nu(V)\cdot F-r+1 > r-r+1=1.$$

\emph{Case 3: $X = \F_e$ with $e\geq 1$ and $D = E$}.  This time, $d= 0$ and $$c= r\nu(V) \cdot E - r+1 \geq r-r+1 = 1.$$

\emph{Case 4: $X = \F_e$ and $D = E+bF$ with $b\geq e$}.  By Riemann-Roch, $$ d = 2b-e+1.$$ We have $$c= r\nu(V) \cdot (E+bF)-r+1> r+rb-r+1=rb+1 \geq 2b+1,$$ where to see  the strict inequality we consider the cases $e=0$ and $e\geq 1$ separately.  The case of $X = \F_0$ and $D = bE+F$ holds by symmetry.

\emph{Case 5: $X = \F_1$ and $D = 2E+2F$.}  We get $d = 5$ and $$c= r\nu(V)\cdot(2E+2F)-r+1>3r+1 \geq 7,$$ so again $d<c$.
\end{proof}

\bibliographystyle{plain}

\begin{thebibliography}{ACGH85}

\bibitem[Bea83]{Beauville}
A. Beauville. Complex algebraic surfaces, volume 68 of London Mathematical Society Lecture Note Series. Cambridge University Press, Cambridge, 1983.

\bibitem[BGJ16]{BertramGollerJohnson}
A. Bertram, T. Goller, and D. Johnson, Le Potier's strange duality, Quot schemes, and multiple point formulas for del Pezzo surfaces, arXiv:1610.04185 (2016).

\bibitem[BL91]{BallicoLanteri}
E. Ballico and A. Lanteri, Ample and spanned vector bundles with $c_2 = 2$ on complex surfaces, Arch. Math. (Basel), {\bf 56} no.\ 6 (1991) 611--615.

\bibitem[CH17]{CoskunHuizengaBN} I. Coskun and J. Huizenga, Brill-Noether theorems and globally generated vector bundles on Hirzebruch surfaces, Nagoya Math. J., https://doi.org/10.1017/nmj.2018.17 (2018), 1--36.

\bibitem[CH19]{CoskunHuizengaExistence} I. Coskun and J. Huizenga, Existence of semistable sheaves on Hirzebruch surfaces, preprint (2019).


\bibitem[DLP85]{DLP}
J.-M. Dr\'{e}zet\ and\ J. Le Potier, Fibr\'es stables et fibr\'es exceptionnels sur ${\bf P}\sb 2$, Ann. Sci. \'Ecole Norm. Sup., {\bf 18} no. 2 (1985), 193--243. 

\bibitem[GHi94]{GottscheHirschowitz}
L. G\"{o}ttsche and A. Hirschowitz, Weak Brill-Noether for vector bundles on the projective plane, in Algebraic
geometry (Catania, 1993/Barcelona, 1994), 63--74, Lecture Notes in Pure and Appl. Math., 200 Dekker, New York.

\bibitem[Ful76]{Fulton}
W. Fulton, Ample vector bundles, Chern classes, and numerical criteria, Invent. Math., {\bf 32} (1976), 171--178.

\bibitem[FL83]{FultonLazarsfeld}
W. Fulton and R. Lazarsfeld, Positive polynomials for ample vector bundles, Ann. Math. (2), {\bf 118} no.\ 1 (1983), 35--60.

\bibitem[Gie71]{GiesekerAmple}
D. Gieseker, $p$-ample bundles and their Chern classes, Nagoya Math. J., {\bf 43} (1971) 91--116.

\bibitem[Har66]{HartshorneAmple}
R. Hartshorne, Ample vector bundles, Publ. Math. Hautes \'Etudes Sci., {\bf 29} no.\ 1 (1966), 64--94.

\bibitem[Har70]{HartshorneAmpleBook}
R. Hartshorne, Ample Subvarieties of Algebraic Varieties, Spring-Verlag, Berlin, Heidelberg, 1970. Lecture Notes in Mathematics, No.\ 156.

\bibitem[Hart77]{Hartshorne}
R. Hartshorne. Algebraic geometry. Springer-Verlag, New York, 1977. Graduate Texts in Mathematics, No. 52.


\bibitem[HiL93]{HirschowitzLaszlo}
A. Hirschowitz\ and\ Y. Laszlo, Fibr\'es g\'en\'eriques sur le plan projectif, Math. Ann., {\bf 297} (1993), no.~1, 85--102.

\bibitem[Hui16]{HuizengaJAG} J. Huizenga, Effective divisors on the Hilbert scheme of points in the plane and interpolation for stable bundles, J. Algebraic Geom., {\bf 25} no.\ 1 (2016), 19--75.

\bibitem[HuL]{HuybrechtsLehn}
  D. Huybrechts and M. Lehn, \emph{The Geometry of Moduli Spaces of Sheaves}, Second edition, Cambridge Mathematical Library, Cambridge Univ. Press, Cambridge, 2010.

\bibitem[Laz04]{LazarsfeldPositivityVol2}
R. Lazarsfeld, {\it Positivity in Algebraic Geometry II, Positivity for Vector Bundles, and for Multiplier Ideals}, Springer-Verlag, 2004.

\bibitem[LeP80]{LePotierAmple}
J. Le Potier, Stabilit\'e et amplitude sur $\mathbb{P}_2(\CC)$, in \emph{Vector Bundles and Differential Equations} (A. Hirschowitz ed.), Progress in Mathematics, Birkhauser, 1980.

\bibitem[LeP97]{LePotier}
J. Le Potier, {\it Lectures on vector bundles}, translated by A. Maciocia, Cambridge Studies in Advanced Mathematics, 54, Cambridge Univ. Press, Cambridge, 1997. 

\bibitem[Mor79]{Mori}
S. Mori, Projective manifolds with ample tangent bundle, {\em Ann. Math. (2)}, {\bf 110} no.\ 3 (1979), 593--606.

  \bibitem[O'G96]{OGrady}
K. G. O'Grady, Moduli of vector bundles on projective surfaces: Some basic results, Invent. Math., {\bf 123} (1996), 141--207.

\bibitem[Ste12]{Sterian}
A. Sterian, Stable ample 2-vector bundles on Hirzebruch surfaces, Bull. Math. Soc. Sci. Math. Roumanie (N.S.), {\bf 55 (103)} no.\ 3 (2012), 311--317.

\bibitem[Wal98]{Walter}
C. Walter, Irreducibility of moduli spaces of vector bundles on birationally ruled surfaces, 
Algebraic Geometry (Catania, 1993/Barcelona, 1994), Lecture Notes in Pure and Appl. Math., {\bf 200} (1998), 201--211.

\end{thebibliography}

\end{document}